\theoremstyle{thmstyleone}%
\newtheorem{theorem}{Theorem}
\newtheorem{proposition}[theorem]{Proposition}%
\theoremstyle{thmstyletwo}%
\newtheorem{example}{Example}%
\newtheorem{assumption}{Assumption}
\newtheorem{lemma}{Lemma}
\theoremstyle{thmstylethree}%
\newtheorem{definition}{Definition}%
\newcommand{\A}{{\mathcal A}}
\newcommand{\B}{{\mathcal B}}
\newcommand{\M}{{\mathcal M}}
\newcommand{\G}{{\mathcal G}}
\newcommand{\z}{{\mathbf z}}
\newcommand{\x}{{\mathbf x}}
\newcommand{\y}{{\mathbf y}}
\newcommand{\e}{{\mathbf e}}
\newcommand{\uu}{{\mathbf u}}
\newcommand{\vv}{{\mathbf v}}
\newcommand{\0}{{\mathbf 0}}
\newcommand{\I}{{\mathbb I}}
\newcommand{\LL}{{\mathcal L}}
\newcommand{\Pp}{{\mathcal P}}
\def\SS{\operatorname{S}}
\def\Rn{\mathbb R^n}
\def\rk{\operatorname{rank}}
\def\blue#1{\textcolor{blue}{#1}}
\begin{document}

\title[Third order symmetric low rank approximation]{Quantifying low rank approximations of third order symmetric tensors}


\author[1]{\fnm{Shenglong} \sur{Hu}}\email{shenglonghu@hdu.edu.cn}

\author*[2]{\fnm{Defeng} \sur{Sun}}\email{defeng.sun@polyu.edu.hk}

\author[3]{\fnm{Kim-Chuan} \sur{Toh}}\email{mattohkc@nus.edu.sg}

\affil[1]{\orgdiv{Department of Mathematics}, \orgname{Hangzhou Dianzi University}, \orgaddress{\city{Hangzhou}, \postcode{310018}, \state{Zhejiang}, \country{China}}}

\affil[2]{\orgdiv{Department of Applied Mathematics}, \orgname{The Hong Kong Polytechnic University}, \orgaddress{\city{Hung Hom},  \state{Hong Kong}}}

\affil[3]{\orgdiv{Department of Mathematics}, \orgname{National University of Singapore}, \orgaddress{\street{10 Lower Kent Ridge Road}, \country{Singapore}}}


\abstract{In this paper, we present a method to certify the approximation quality of a low rank tensor to a given third order symmetric tensor. Under mild assumptions, best low rank approximation is attained if a control parameter is zero or quantified quasi-optimal low rank approximation is obtained if the control parameter is positive.
This is based on a primal-dual method for computing a low rank approximation for a given tensor. The certification is derived from the global optimality of the primal and dual problems, and is characterized by easily checkable relations between the primal and the dual solutions together with another rank condition. The theory is verified theoretically for orthogonally decomposable tensors as well as numerically through examples in the general case.}

\keywords{tensor, low rank approximation, quasi-optimal, polynomial, SDP relaxation, moment, rank constraint, duality, optimality, orthogonally decomposable tensor, projection}


\pacs[MSC Classification]{90C26, 15A69, 65F18, 90C46}

\maketitle

\section{Introduction}\label{sec:introduction}
Let $\mathbb R$ be the field of real numbers and $\SS^m(\mathbb R^n)$ be the space of symmetric tensors with real entries of order $m$ and dimension $n$ for positive integers $m$ and $n$.
When $m=2$, $\SS^2(\mathbb R^n)$ is the space of symmetric $n\times n$ real matrices. A tensor $\A\in \SS^m(\mathbb R^n)$ can be represented by its entries $a_{i_1\dots i_m}$ with $i_j\in\{1,\dots,n\}$ for all $j\in\{1,\dots,m\}$. In this paper, we will mainly focus on third order symmetric tensors, i.e., $m=3$, which is the most important case in several applications \cite{lim2021tensors}. In the set $\operatorname{S}^3(\mathbb{R}^n)$, there is the fundamental set of \textit{decomposable tensors}, i.e., tensors of the form $\x^{\otimes 3}$ for a vector $\x\in\mathbb R^n$, where $\x^{\otimes 3}$ is a short hand for
\[
\x\otimes\x\otimes \x\in \SS^3(\mathbb R^n)
\]
whose $(i_1,i_2,i_3)$-entry is $x_{i_1}x_{i_2}x_{i_3}$ for all $i_1,i_2,i_3\in\{1,\dots,n\}$.

A third order symmetric tensor $\mathcal A$ has (real symmetric) \textit{rank} $r$ if it can be represented as (cf.\ \cite{CGLM-08})
\begin{equation}\label{eq:non-decomp}
\mathcal A=\sum_{i=1}^r\lambda_i\mathbf x_i^{\otimes 3}\ \text{for some }\lambda_i>0\ \text{and }\|\mathbf x_i\|=1\ \text{with }i=1,\dots,r,
\end{equation}
and a decomposition of the form \eqref{eq:non-decomp} with a summand strictly smaller than $r$ does not exist. The rank of a tensor $\A$ is denoted as $\rk(\A)$.

A rank one decomposition of a given $\A\in\SS^3(\Rn)$ is a decomposition as in \eqref{eq:non-decomp}, it becomes a \textit{rank decomposition} if $r=\rk(\A)$.
It is a fact that each tensor $\A\in\SS^3(\Rn)$ has a rank decomposition as \eqref{eq:non-decomp} \cite{CGLM-08}.

In this paper, we consider the following problem,
which is termed as the \textit{best rank-$r$ approximation problem}:
\begin{equation}\label{eq:best-r}
\inf\bigg\{\frac{1}{2}\big\|\mathcal A-\B\big\|^2 : \rk(\B)\leq r\bigg\},
\end{equation}
for a given tensor $\A\in\SS^3(\mathbb R^n)$ and a positive integer $r$. Usually, $r$ is assumed to be small, and thus it is also called the \textit{best low rank approximation problem}.

The prefix ``inf" in the optimization problem \eqref{eq:best-r} instead of ``min" is due to the fact that the constraint set is not necessarily closed and hence the optimizer may not exist, which is emphasized by De Silva and Lim \cite{DL-08}.
Therefore, the following assumption is necessary.
\begin{assumption}\label{assmp:well-defined}
There exists a best rank-$r$ approximation for the given tensor $\A$, i.e., problem~\eqref{eq:best-r} has an optimal solution.
\end{assumption}

Assumption~\ref{assmp:well-defined} is assumed throughout this paper. Under Assumption~\ref{assmp:well-defined}, the ``$\inf$" in problem \eqref{eq:best-r} can be strengthened as ``$\min$".

Tensors (a.k.a. hypermatrices) have become a standard tool in a wide variety of applications and mathematical sciences \cite{Lim:hyp,L-12,Kol-Bad:sur,lim2021tensors}. Particularly, it is one of the foundations in multilinear algebra. A fundamental topic in this area is tensor decomposition, which finds a decomposition of the given tensor via rank one tensors with the smallest possible length (cf.\ \eqref{eq:non-decomp}). A companion but with the same importance is the best low rank approximation for a given tensor, which is a cornerstone in several disciplines, especially in applications where data is collected with noise and thus approximation is prevalent \cite{GKT-13}.
The best low rank approximation problem for a given tensor has a rich literature, see \cite{Kol-Bad:sur,CGLM-08,De-De-Van:bes,GKT-13,L-12,C-00,hu2023linear} and references therein.

In this paper, we will focus on symmetric tensors. A second order symmetric tensor is the symmetric matrix in the usual sense. However, many crucial properties change dramatically if we switch from second order symmetric tensors to higher orders, e.g., the possible nonexistence of optimizers for \eqref{eq:best-r} \cite{DL-08}. Behind this unfavorable phenomena is that the rank of a tensor has more complicated but rich features, such as the NP-hardness to compute it as established by H\aa stad \cite{Has:ran}, the nonsymmetric rank differs from the rank as noted by Shitov \cite{S-18}; we refer the reader to \cite{CGLM-08,L-12} and references therein for more information. However, despite of these mysteries, many favourable properties, such as the celebrated Alexander-Hirschowitz's theorem \cite{AH-95} which identifies all the generic ranks, and Kruskal's theorem on the uniqueness of the rank decomposition \cite{K-77}, make methods based on symmetric tensor approximation/decomposition unbelievably appealing in a wide range of applications, such as blind sources separations \cite{CJ-10}.

The developments of the symmetric tensor approximation problem are based on the decomposition problem, cf.\ \eqref{eq:best-r}.
The symmetric tensor decomposition problem, also known as the \textit{Waring decomposition} in the algebraic geometry literature \cite{VW-02}, has attracted considerable attentions even very recently, such as Balllico and  Bernardi \cite{BB-12}, Bernardi, Gimigliano and Id\`a \cite{BGI-11}, Brachat, Comon, Mourrain and  Tsigaridas \cite{BCMT-10}, Comon and Mourrian \cite{CM-96}, Nie \cite{N-17}, and Oeding and Ottaviani \cite{OO-13}, etc. 

The other side of the coin is the symmetric tensor best low rank approximation problem. It is particularly preferable in applications, see \cite{Kol-Bad:sur,GKT-13}. For example, a low rank approximation can reduce the complexity of manipulating an $m$-th order symmetric tensor of dimension $n$ from $O(n^m)$ to $O(n)$, which is important and indispensable for applications where $n$ is large and computational cost such as $O(n^m)$ is prohibitive. The matrix case (i.e., $m=2$) is resolved by well-developed techniques in both theory and algorithms \cite{GV-12}. However, in contrast to the matrix counterpart, the higher order equivalents are still under investigation and several key problems must be resolved first. Besides the possible nonexistence of optimal solutions \cite{DL-08}, there are challenging issues such as the NP-hardness even just to obtain the best rank one approximation \cite{HL-13}, etc. Therefore, it is an important research topic in the community. For the case $r=1$, i.e., the best rank one approximation, there are extensive research done, such as Kofidis and Regalia \cite{KR-02}, Kolda and Mayo \cite{KM-11}, Nie and Wang \cite{NW-14}, Qi \cite{Qi:rat}, Zhang, Ling and Qi \cite{ZLQ-12}, etc. For general $r$, alternating minimization techniques are adopted to solve \eqref{eq:best-r}, see the survey \cite{C-00,GKT-13} and references therein. While very good numerical performances were observed, \textit{there is a lack of theoretical justification on what is found, such as global optimality certification, approximation quality guarantees, etc}, which are long standing fundamental questions for best low rank approximation. 
A very recent progress is made by Nie \cite{N-17-2}, in which a method is proposed using generating polynomials and it can find a quasi-optimal solution if the given tensor is sufficiently close to a low rank one.

In this paper, we will study the fundamental question in  symmetric tensor best low rank approximation:
\[
\textit{Certify a candidate as being globally optimal or as a quantified quasi-optimal solution.}
\]
For the rank one case (i.e., $r=1$), Nie and Wang \cite{NW-14} proposed a semidefinite relaxation method utilizing advances from polynomial optimization. It is shown that under a rank condition for the relaxed semidefinite programming (SDP) problem, it can certify the computed rank one tensor as being the best rank one approximation for the given data. The rank one approximation under nonnegativity is studied by the authors in \cite{hu2019best}. This paper considers the case $r\geq 1$. 

\textbf{Contributions.}
Our main theorem can be stated as follows, in which we defer the exact meaning of the notations in the content for clarity.
\begin{theorem}[Rank-$r$ Approximation]\label{thm:best-intro}
Let $\sigma\geq 0$ and $r\geq 1$. Given a tensor $\mathcal B=\sum_{i=1}^r\lambda_i\mathbf{x}_i^{\otimes 3}$. Let $\mathbf{y}$ be the moment sequence defined by the measure $\mu:=\sum_{i=1}^r\lambda_i\delta_{\mathbf{x}_i}$. If there exists a triplet $(U,W,Z)$ such that
\begin{enumerate}
\item the feasibility holds
\[
\mathcal M_k^*(Z)+\mathcal P_k^*(U)-\mathcal L_k^*(W)=\sigma\mathcal M_k^*(E_0),\ \text{and }Z\succeq 0,
\]
\item the optimality holds
\begin{equation*}
\langle Z,\mathcal M_k(\y)\rangle=0,\
\text{and }\\ \mathcal P_k(\y)\in\Pi_{\operatorname{R}(r)}(M(\A)-U).
\end{equation*}
\end{enumerate}
Then, it holds
\begin{enumerate} 
\item if $r=1$ and $\sigma<\rho(\A)$, then $(\lambda_1+\sigma)\x_1^{\otimes 3}$ is a best rank one approximation of $\A$,
\item if $\sigma<\frac{\tau(\A)\rho(\A)}{2r}$ and Assumption~\ref{assmp:well-condition} holds when $\sigma>0$, then $\mathcal B$ is a $2\sqrt{\frac{r}{\tau(\A)}}\bigg(\Big(1-\sqrt{\frac{\tau(\A)}{r}}\Big)\|\A\|+2\sigma\bigg)\sigma$-quasi-optimal rank-$r$ approximation of $\mathcal A$.
\end{enumerate}
\end{theorem}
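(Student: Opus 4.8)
The plan is to read hypotheses (1) and (2) as the exact Karush--Kuhn--Tucker system for a primal--dual pair of moment semidefinite programs, in which the primal relaxes the best rank-$r$ approximation problem~\eqref{eq:best-r} by lifting the candidate $\B=\sum_{i=1}^r\lambda_i\x_i^{\otimes3}$ to the truncated moment sequence $\y$ of the atomic measure $\mu$. The feasibility relation $\mathcal{M}_k^*(Z)+\mathcal{P}_k^*(U)-\mathcal{L}_k^*(W)=\sigma\,\mathcal{M}_k^*(E_0)$ together with $Z\succeq0$ exhibits $(U,W,Z)$ as a dual-feasible point for the objective perturbed by the linear term $\sigma\,\mathcal{M}_k^*(E_0)$; the relation $\langle Z,\mathcal{M}_k(\y)\rangle=0$ is complementary slackness, and $\mathcal{P}_k(\y)\in\Pi_{\operatorname{R}(r)}(M(\A)-U)$ is the stationarity condition for the inner best rank-$r$ matrix (Eckart--Young) subproblem on the corrected flattening $M(\A)-U$. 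First I would combine weak duality with these complementarity relations to conclude that $\y$, equivalently $\B$, globally minimizes the $\sigma$-perturbed relaxation; when $\sigma=0$ the perturbation vanishes, so $\B$ is globally optimal for \eqref{eq:best-r} once the moment optimizer is shown to be $r$-atomic.

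For the first conclusion ($r=1$) I would specialize to a single atom. The moment matrix $\mathcal{M}_k(\y)$ associated with $\mu=\lambda_1\delta_{\x_1}$ is rank one, so $\langle Z,\mathcal{M}_k(\y)\rangle=0$ pins down the range of $Z$, while the stationarity relation forces $\x_1$ to align with the dominant eigenvector of $M(\A)-U$. The passage $\lambda_1\mapsto\lambda_1+\sigma$ is exactly the magnitude adjustment produced by the $\sigma\,\mathcal{M}_k^*(E_0)$ term once the optimal direction $\x_1$ is fixed, and the hypothesis $\sigma<\rho(\A)$ guarantees that $\x_1$ realizes the true global best rank-one value rather than a merely stationary one; hence $(\lambda_1+\sigma)\x_1^{\otimes3}$ attains the minimum in \eqref{eq:best-r} for $r=1$.

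For the second, quantitative conclusion I would carry out a sensitivity analysis. The exact optimality of $\B$ for the $\sigma$-perturbed relaxation, combined with weak duality placing the dual value below the optimal value of \eqref{eq:best-r}, yields an estimate of the shape $\tfrac12\|\A-\B\|^2-v^\star\le C(\A,r)\,\sigma$, where $v^\star$ denotes the optimal value of \eqref{eq:best-r}. The stationarity condition converts the tensor gap into the spectral gap between $M(\A)-U$ and its rank-$r$ projection, which Eckart--Young controls; the quantities $\tau(\A)$ and $\rho(\A)$ enter as the spectral margins that turn the $\sigma$-perturbation of the flattening into the displayed factor $2\sqrt{r/\tau(\A)}\big((1-\sqrt{\tau(\A)/r})\|\A\|+2\sigma\big)$, while the threshold $\sigma<\tau(\A)\rho(\A)/(2r)$ keeps this margin positive so that the rank-$r$ projection stays well separated. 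Finally, Assumption~\ref{assmp:well-condition} (needed only when $\sigma>0$) supplies the flat-extension rank condition ensuring that the perturbed moment optimizer is honestly $r$-atomic, so that $\B$ indeed has rank at most $r$. I expect the main obstacle to be precisely this last translation: propagating the controlled $\sigma$-perturbation of the objective through the moment-to-tensor map into a clean upper bound on $\tfrac12\|\A-\B\|^2$, while simultaneously certifying via the rank condition that no spurious higher-rank mass enters $\y$. It is in the interplay between this perturbation estimate and the flat-extension argument that the delicate constants $\tau(\A)$ and $\rho(\A)$ must be tracked with care.
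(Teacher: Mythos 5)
Your first step is sound and matches the paper's own route: the two hypotheses are exactly the certificate of Theorem~\ref{thm:optimality}, and since $\y$ is by hypothesis the moment sequence of the $r$-atomic measure $\mu$, the remaining requirements ($\mathcal L_k(\y)=0$, $\mathcal M_k(\y)\succeq 0$, and $\operatorname{rank}(\mathcal P_k(\y))\leq r$ because $\mathcal P_k(\y)=\sum_{i=1}^r\lambda_i\x_i(\x_i^{\otimes 2})^\mathsf{T}$) hold automatically — note that this also means no flatness or atomicity argument is needed anywhere in this theorem, contrary to your closing worry. Weak duality plus the complementarity computation then certifies $\y$ as a global optimizer of \eqref{eq:non-opt-rank-reform}, as you say. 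Your $r=1$ sketch is also in the right spirit, though the paper's mechanism (Proposition~\ref{prop:app-sigma}) is a scalar argument: $\mathcal L_k(\y)=0$ and a Schur-complement bound force $X(1,1)=\lambda_1$, and optimizing over the coefficient gives $\lambda_1=\langle\A,\x_1^{\otimes 3}\rangle-\sigma=\rho(\A)-\sigma$; the role of $\sigma<\rho(\A)$ is merely to exclude $\lambda_1=0$, not to distinguish global from stationary directions.

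The genuine gap is in the quantitative conclusion, where your proposal misidentifies the roles of both Assumption~\ref{assmp:well-condition} and the threshold, and omits the mechanism that produces the constant. Assumption~\ref{assmp:well-condition} is \emph{not} a flat-extension condition ensuring $r$-atomicity; it is a conditioning assumption on the factor matrix $\overline A=[\overline\x_1,\dots,\overline\x_r]$ of a best approximant $\overline{\mathcal B}$, used via Lemma~\ref{lem:kappa} to bound the smallest eigenvalue of the Hadamard-cube Gram matrix $\overline C=\overline A^\mathsf{T}\overline A\circ\overline A^\mathsf{T}\overline A\circ\overline A^\mathsf{T}\overline A$ below by $\tau(\A)$. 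The actual proof (Proposition~\ref{prop:appr-rank-r}) compares primal objective values, so the excess error is exactly $2\sigma(\mathbf e^\mathsf{T}\overline\lambda-\mathbf e^\mathsf{T}\lambda)$, and everything reduces to $\ell_1$-bounds on the \emph{coefficient vectors}: the upper bound $\|\overline\lambda\|_1\leq\sqrt{r/\tau(\A)}\,\|\A\|$ from the eigenvalue bound on $\overline C$, and a lower bound on $\|\lambda\|_1$ obtained from the first-order optimality identity $\lambda^\mathsf{T}C\lambda=\sum_i\lambda_i\langle\A,\x_i^{\otimes 3}\rangle-\sigma\mathbf e^\mathsf{T}\lambda$ combined with $\lambda^\mathsf{T}C\lambda\leq\|\lambda\|_1^2$. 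Here $\rho(\A)$ enters only through $\|\overline\lambda\|_1\geq\rho(\A)$ (a best rank-$r$ approximation is at least as good as the best rank-one one), and $\sigma<\tau(\A)\rho(\A)/(2r)$ serves solely to keep the quantity $\|\overline\lambda\|_1-2r\sigma/\tau(\A)$ nonnegative so the square root can be extracted — it has nothing to do with keeping ``the rank-$r$ projection well separated,'' and no spectral-gap or Eckart--Young perturbation argument on $M(\A)-U$ appears or would deliver these $\ell_1$ coefficient estimates. Your sensitivity-analysis outline correctly predicts a bound of shape $C(\A,r)\,\sigma$, but the entire content of the theorem is the computation of $C(\A,r)$, and the route you propose gives no handle on it.
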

Note that when $\sigma=0$, we get best rank-$r$ approximants.
We also note that $\tau(\A)$ is an intrinsic number determined by a rank decomposition of $\mathcal{B}$, which depends on $r$, and $\tau(\A)=1$ when $r=1$.

Theorem~\ref{thm:best-intro} is proved by first establishing a primal-dual method \eqref{eq:non-opt-rank-reform}-\eqref{eq:lagrange-dual-min}  for solving a relaxation of the best low rank approximation problem \eqref{eq:best-r}, and then an approximation quality analysis of the primal problem \eqref{eq:non-opt-rank-reform} to \eqref{eq:best-r}.

More precisely, we apply the semidefinite relaxation for moment problems and a rank characterization to reformulate the third order symmetric tensor best low rank approximation problem as a rank constrained nonlinear matrix optimization problem.
Advantages of nonsmooth analysis on low rank projection of matrices is explored to propose a relaxation for the reformulation, which is the \textit{primal} problem.  Then, the \textit{dual} problem is explicitly given. A theorem on the certification of the global optimality of a candidate solution for the primal and dual problem is given. One ingredient is that the global optimality certification conditions are explicitly given and easily checkable once a feasible solution for the primal problem, together with a Lagrange multiplier which is always available from a primal-dual algorithm, is given.
This is the result of our careful design of the primal problem \eqref{eq:non-opt-rank-reform}, while keeping in mind of a target dual certificate.
Approximation quality of problem \eqref{eq:non-opt-rank-reform} to the original best low rank approximation problem \eqref{eq:best-r} is then established. It depends on a control parameter $\sigma$. If $\sigma=0$, it relates to the best low rank approximation, and if $\sigma>0$,
a quantified quasi-optimal low rank approximation is given. Positive $\sigma$ is preferable
to ensure the strict feasibility of the
dual problem, see Lemma~\ref{lem:strict}. While, both $\sigma=0$ and $\sigma>0$ are allowable for approximation quality certifications, see Theorem~\ref{thm:best-intro}. The validation of this approach is verified for orthogonally decomposable tensors from a theoretical perspective, and several examples numerically.

Our method employs recent advances
from the semidefinite relaxation for the moment problems, duality theory in low rank matrix optimization and nonsmooth analysis for low rank matrix projection. In particular, it is directly motivated by the works by Gao and Sun on low rank matrix optimization problems and the duality theory \cite{GS-10,gao2010structured}, by Nie on semidefinite relaxation of moment problems \cite{N-14-ATKMP,nie2023moment,N-15}, and the tensor decomposition method by Tang and Shah \cite{TS-15}, in which a theoretical justification for the decomposition of orthogonally decomposable tensors is established.

\textbf{Contents.} The approximation problem and related preliminaries are given in Section~\ref{sec:tensor}. The nonlinear matrix optimization reformation is given in Section~\ref{sec:matrix}. In order to keep the main theme of this paper on the tensor best low rank approximation, some notations and supporting techniques for the nonsmooth analysis of low rank matrix projections and others are put in Appendix~\ref{sec:indices} and Appendix~\ref{app:low-rank}. Section~\ref{sec:app-quality} studies the approximation quality of the problem proposed in Section~\ref{sec:matrix}. The theoretical certification is given in Section~\ref{sec:app-quality} and the numerical illustration is given in Section~\ref{sec:numerical}. Some final remarks are given in Section~\ref{sec:conclusion}.

\section{Preliminaries}\label{sec:tensor}
In this paper, we will focus on third order symmetric tensors.
Given a positive integer $n$, a third order symmetric tensor $\mathcal A\in\operatorname{S}^3(\mathbb R^n)$ is a collection of $n^3$ scalars $a_{i_1i_2 i_3}$, termed the entries of $\mathcal A$, for all $i_j\in\{1,\dots,n\}$ and $j\in\{1,2,3\}$. As the case of symmetric matrices, the number of independent entries are smaller than, but in the same order of, $n^3$ due to the symmetry. There are all together ${n+2\choose n-1}$ independent entries encoded by a third order $n$ dimensional symmetric tensor.
A symmetric rank one tensor in $\operatorname{S}^3(\mathbb R^n)$ is an element $\mathbf x^{\otimes 3}$ for some vector $\mathbf x \in\mathbb R^n\setminus\{\0\}$.
We refer the readers to \cite{Lim:hyp} and references herein for basic notions on tensors.

The notation $\|\cdot\|$ represents the \textit{Euclidean norm} for a vector, the \textit{Frobenius norm} for a matrix \cite{HJ-85}, and the \textit{Hilbert-Schmidt norm} for a tensor \cite{Lim:hyp}, defined as
\[
\|\A\|:=\Big(\sum_{i,j,k}a_{ijk}^2\Big)^{\frac{1}{2}}\ \text{for all }\A\in\SS^3(\Rn)
\]
with the corresponding inner product defined as
\[
\langle \A,\B\rangle:=\sum_{i,j,k}a_{ijk}b_{ijk}.
\]

It can be shown that
\[
\rho(\A):=\max\{\langle\A,\x^{\otimes 3}\rangle\colon \x^\mathsf{T}\x=1\}
\]
defines a norm on $\SS^3(\Rn)$ \cite{Qi:rat}.  As for the matrix case, $\rho(\A)$ is called the \textit{spectral radius} of $\A$. It can be shown that
\begin{equation*}
\rho(\A)\leq \|\A\|.
\end{equation*}

Define
\[
\nu(n,s):=\frac{n^{s+1}-1}{n-1}\quad \text{and}\quad
\zeta(n,s):={n+s-1\choose n-1}.
\]
Further basic notations are put in Appendix~\ref{sec:indices}, such as several index sets, moment sequences and matrices, extended moment sequences and matrices, and localizations, etc.

\subsection{Identifying tensors with extended moment matrices}\label{sec:tensor-matrix}
Given a third order symmetric tensor $\A\in\operatorname{S}^3(\mathbb R^{n})$, we will identify it with a block of an extended moment matrix $M^{\A}\in\operatorname{S}^2(\mathbb R^{\nu(n,2)})$ via
\[
m^{\A}_{\alpha,\beta}:=\begin{cases}  a_{\alpha+\beta}&\ \text{if }|\alpha+\beta|=3,\\ 0&\text{otherwise},\end{cases}\ \text{for all }\alpha,\beta\in\I^{\leq 2}.
\]
Note that index sets such as $\I^{\leq 2}$ are given in Appendix~\ref{sec:matricization}.

Thus, $M^{\A}$ is a structured matrix with the following partitioned blocks
\begin{equation}\label{eq:matrix-structure}
M^{\A}=\begin{bmatrix}0&0&0\\ 0&0&M_{12}\\ 0&M_{21}&0\end{bmatrix},
\end{equation}
where $M_{ij}\in\mathbb R^{n^i\times n^j}$ is the block matrix of the extended moment matrix $M^\A$ corresponding to the block for the monomials $\x^{\alpha+\beta}$ with $\alpha\in \I^i$ and $\beta\in \I^j$. For the sake of easy reference, we will reserve the notation
\begin{equation}\label{eq:matrix-tensor}
M(\A):=M_{12}=M_{21}^\mathsf{T}\in\mathbb R^{n\times n^2}.
\end{equation}

$M(\A)$ is different from the Catalecticant matrix of $\A$ \cite{L-12}.
In view of the identification \eqref{eq:matrix-tensor}, we will interchangeably refer to a given tensor by $\A$ and $M(\A)$.

\begin{proposition}\label{prop:inner}
It holds
\begin{equation*}
\|\A\|^2= \|M(\A)\|^2.
\end{equation*}
\end{proposition}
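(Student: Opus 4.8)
The plan is to reduce the identity to a term-by-term matching between the $n^3$ squared entries of $\A$ and the $n^3$ squared entries of the rectangular block $M(\A)=M_{12}\in\mathbb R^{n\times n^2}$ from \eqref{eq:matrix-tensor}. First I would unwind the definition of the extended moment matrix $M^{\A}$: the row index $\alpha$ runs over $\I^1$ (ordered single indices, i.e.\ $\alpha=e_i$ with $i\in\{1,\dots,n\}$) and the column index $\beta$ runs over $\I^2$ (ordered pairs, i.e.\ $\beta=e_j+e_k$ with $(j,k)\in\{1,\dots,n\}^2$), so that $M_{12}$ genuinely has the stated size $n\times n^2$. By the defining relation $m^{\A}_{\alpha,\beta}=a_{\alpha+\beta}$ for $|\alpha+\beta|=3$, the $(i,(j,k))$-entry of $M(\A)$ equals $a_{e_i+e_j+e_k}$, which by the symmetry of $\A$ is exactly the tensor entry $a_{ijk}$.

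The core step is to observe that the assignment $(i,(j,k))\mapsto(i,j,k)$ is a bijection from the set of matrix positions of $M(\A)$ onto the set $\{1,\dots,n\}^3$ of ordered index triples of $\A$. This is where one must be careful about the indexing convention: although $\beta=e_j+e_k$ is an unordered datum, the column is labelled by the ordered pair $(j,k)$, so that $(j,k)$ and $(k,j)$ index two distinct columns. Correspondingly $\A$ contributes both $a_{ijk}^2$ and $a_{ikj}^2$; symmetry guarantees these tensor entries are equal, consistent with the equal matrix entries $M(\A)_{i,(j,k)}$ and $M(\A)_{i,(k,j)}$. Hence, under this bijection, no entry is dropped and none is double counted beyond what the symmetric tensor itself repeats.

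Granting the bijection, the computation is immediate:
\[
\|M(\A)\|^2=\sum_{i=1}^n\sum_{(j,k)}\big(M(\A)_{i,(j,k)}\big)^2=\sum_{i,j,k}a_{ijk}^2=\|\A\|^2,
\]
using the Frobenius norm for the middle quantity and the Hilbert-Schmidt norm for $\A$. The only genuine obstacle is bookkeeping: one has to confirm from the conventions in Appendix~\ref{sec:indices} that $\I^1$ and $\I^2$ are indexed by ordered tuples (so that their cardinalities are $n$ and $n^2$ rather than the reduced monomial counts $\zeta(n,1)$ and $\zeta(n,2)$), and that, by the degree constraint $|\alpha+\beta|=3$ together with $|\alpha|,|\beta|\leq 2$, the blocks $M_{12}$ and $M_{21}=M_{12}^\mathsf{T}$ are the only nonzero ones, as reflected in the structure \eqref{eq:matrix-structure}. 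Once the index sets are pinned down, the result follows with no nontrivial estimate.
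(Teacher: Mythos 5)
Your proof is correct and is exactly the intended argument: the paper states Proposition~\ref{prop:inner} without any proof, treating it as immediate from the definition of the extended moment matrix, and your verification supplies precisely that bookkeeping. The decisive observation---that the columns of $M(\A)$ are indexed by \emph{ordered} pairs $(j,k)$ (cardinality $n^2$, not the reduced monomial count $\zeta(n,2)$), so $(i,(j,k))\mapsto(i,j,k)$ bijects matrix positions onto ordered triples and $\|M(\A)\|^2=\sum_{i,j,k}a_{ijk}^2=\|\A\|^2$---is exactly the point of using extended moment matrices rather than ordinary ones, as the paper's own footnote on weights suggests.
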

\subsection{Polynomial Identification}\label{sec:polynomial}
The problem \eqref{eq:best-r} can be parameterized as
\begin{equation}\label{eq:best-r-p}
\min\bigg\{\frac{1}{2}\big\|\mathcal A-\sum_{i=1}^r\lambda_i\mathbf x_i^{\otimes 3}\big\|^2 : \lambda_i\geq 0,\ \|\mathbf x_i\|=1\ \text{for all }i=1,\dots,r\bigg\}.
\end{equation}
Note that the constraint $\|\mathbf x_i\|=1$ is added to remove some ambiguity, since the unitary scaling between $\mathbf x_i$'s and $\lambda_i$ gives the same approximate tensor. Problem~\eqref{eq:best-r-p} can be studied by applying techniques directly from polynomial optimization \cite{L-09}. However, this may not be the best way from the computational perspective. Take the case $n=10$ and $r=2$ for example, the standard relaxation will give an SDP with matrix size around $\zeta(25,4)=12650$ and number of equations around $\zeta(25,4)^2/2$ \cite{L-09}. The current SDP solvers have limited ability to solve such instances \cite{SDM,SDPT3,ZST-10,YST-15}, let alone certifying global optimality of \eqref{eq:best-r}. In this paper, we will present a method which gives a nonlinear matrix optimization problem with matrix size around $\zeta(n+1,2)=\zeta(11,2)=55$.

In the following, for simplicity, we will say $B:=M(\B)$ is a feasible (an optimal) solution of \eqref{eq:best-r} in view of the equivalence in Section~\ref{sec:tensor-matrix}.

For easy reference, in the following, we will denote the $n-1$-dimensional sphere in $\mathbb R^n$ as $\mathbb S^{n-1}$, i.e.,
\[
\mathbb S^{n-1}:=\{\mathbf x\in\mathbb R^n\colon \mathbf x^\mathsf{T}\mathbf x=1\}.
\]
Let $\mathcal M(\mathbb S^{n-1})$ be the set of Borel measures on $\mathbb S^{n-1}$. The \textit{support} of a Borel measure $\mu\in \mathcal M(\mathbb S^{n-1})$ is denoted as $\operatorname{supp}(\mu)$, which is defined as the smallest closed set $S\subseteq\mathbb S^{n-1}$ such that $\mu(\mathbb S^{n-1}\setminus S)=0$.
Given a measure $\mu\in\mathcal M(\mathbb S^{n-1})$, we denote by $\|\mu\|_0$ the cardinality of its support $\operatorname{supp}(\mu)$.  Whenever $\mu$ is finitely supported, $\|\mu\|_0$ counts the number of points in the support; otherwise $\|\mu\|_0:=+\infty$.
Let $\x\in\mathbb S^{n-1}$, then $\delta_{\x}\in \mathcal M(\mathbb S^{n-1})$ is the \textit{Dirac measure} at $\x$, with support $\{\x\}$ and having mass $1$ at $\x$ and mass $0$ elsewhere. If a measure $\mu\in\mathcal M(\mathbb S^{n-1})$) has finite support, whose cardinality is $r\geq 0$, then it can be represented as
\begin{equation}\label{eq:finite}
\mu=\sum_{i=1}^r\lambda_i\delta_{\mathbf x_i}
\end{equation}
for some $\mathbf x_i\in\mathbb S^{n-1}$ and $\lambda_i>0$ with $i=1,\dots,r$. In this case, it is called an \textit{$r$-atomic} measure.

A given third order symmetric tensor $\mathcal A$ can be uniquely decoded via
\begin{equation}\label{eq:entries-coding}
a_{i_1i_2 i_3}\leftrightarrow a_{\alpha}\ \text{with }\alpha=(\alpha_1,\dots,\alpha_n)\in\mathbb N^n_{=3}\ \text{via }\mathbf x^\alpha:=\prod_{i=1}^nx_i^{\alpha_i}=x_{i_1}x_{i_2} x_{i_3}.
\end{equation}
It is easy to see that this correspondence is one to one. With this correspondence, a third order symmetric tensor $\mathcal A$ can be interpreted as a \textit{truncated extended moment sequence} (abbreviated as \textit{tems}), which is defined as a vector
\begin{equation*}
(a_{\alpha})_{\colon \alpha\in \I^3}\in\mathbb R^{n^3}.
\end{equation*}

More precisely, a third order symmetric tensor $\mathcal A$ with a rank-$r$ decomposition as \eqref{eq:non-decomp} can be naturally restated as a truncated extended moment sequence of a finite ($r$-atomic) Borel measure \eqref{eq:finite} on $\mathbb S^{n-1}$. Actually, with $\mu=\sum_{i=1}^r\lambda_i\delta_{\mathbf x_i}$, it follows from \eqref{eq:non-decomp} and \eqref{eq:entries-coding} that
\[
a_{i_1i_2i_3}=a_{\alpha}=\sum_{i=1}^r\lambda_i (\x_i)_{i_1}(\x_i)_{i_2}(\x_i)_{i_3}=\sum_{i=1}^r\lambda_i \mathbf x_i^\alpha=\int_{\mathbb S^{n-1}}\mathbf x^\alpha \operatorname{d}\mu (\mathbf x)
\]
for all $i_1,i_2,i_3\in\{1,\dots,n\}$ and the corresponding $\alpha$ such that $\prod_{i=1}^nx_i^{\alpha_i}=x_{i_1} x_{i_2} x_{i_3}$.
More concisely, \eqref{eq:non-decomp} can be written as
\begin{equation}\label{eq:moment-rep}
\mathcal A=\sum_{i=1}^r\lambda_i\mathbf x^{\otimes 3}_i\simeq \int_{\mathbb S^{n-1}}\mathbf x^{\otimes 3}
\operatorname{d}\mu (\mathbf x),
\end{equation}
where the symbol ``$\simeq$" is understood as the obvious correspondence between the tensor $\A$ and the vector on the right hand side. Therefore, from this perspective, each third order symmetric tensor can be regarded as a truncated extended moment sequence of total degree three and vice versa.

With the moment representation \eqref{eq:moment-rep}, the third order symmetric tensor
best rank-$r$  approximation problem \eqref{eq:best-r} can be reformulated as a quadratic moment optimization problem with support constraint as follows.
\begin{proposition}[Moment Reformulation]\label{prop:moment-reformulate}
For any given third order symmetric tensor $\mathcal A$ and any nonnegative integer $r$, the best rank-$r$ tensor approximation problem \eqref{eq:best-r} is equivalent to the following moment optimization problem
\begin{equation}\label{eq:non-opt-rank}
\begin{array}{rl}
\min& \frac{1}{2}\|\mathcal A-\mathcal B\|^2\\[3pt]
\text{s.t.}& \mathcal B\simeq\int_{\mathbb S^{n-1}}\mathbf x^{\otimes 3}
\operatorname{d}\mu (\mathbf x),
\\
&\|\mu\|_0\leq r,\\
&\mu\in\mathcal M(\mathbb S^{n-1})
\end{array}
\end{equation}
in the sense that $(\mathcal B=\sum_{i=1}^r\lambda_i\mathbf x^{\otimes 3}_i, \mu=\sum_{i=1}^r\lambda_i\delta_{\mathbf x_i})$ is an optimal solution of \eqref{eq:non-opt-rank} whenever $\sum_{i=1}^r\lambda_i\mathbf x^{\otimes 3}_i$ forms an optimal solution of \eqref{eq:best-r} and vice verse.
\end{proposition}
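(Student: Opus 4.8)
The plan is to reduce the asserted equivalence to a single set identity between feasible regions. Observe that the objective $\frac12\|\A-\B\|^2$ in \eqref{eq:non-opt-rank} depends only on $\B$ and not on the measure representing it, so it suffices to show that $\{\B\colon \rk(\B)\le r\}$ equals the image of the feasible set of \eqref{eq:non-opt-rank} under $(\B,\mu)\mapsto\B$, namely $\{\int_{\mathbb S^{n-1}}\x^{\otimes 3}\operatorname{d}\mu(\x)\colon \mu\in\mathcal M(\mathbb S^{n-1}),\ \|\mu\|_0\le r\}$. Once this identity is established, the two problems have the same value and corresponding minimizers. I would therefore prove the identity by two inclusions.

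For ``$\subseteq$'', take $\B$ feasible for \eqref{eq:best-r}, so $s:=\rk(\B)\le r$. By the definition of rank \eqref{eq:non-decomp}, $\B=\sum_{i=1}^s\lambda_i\x_i^{\otimes 3}$ with $\lambda_i>0$ and $\x_i\in\mathbb S^{n-1}$ (the case $\B=0$, $s=0$, being handled by the zero measure). Setting $\mu:=\sum_{i=1}^s\lambda_i\delta_{\x_i}$ produces a nonnegative finitely supported measure whose support has at most $s\le r$ points, hence $\|\mu\|_0\le r$, and the moment representation \eqref{eq:moment-rep} gives exactly $\B\simeq\int_{\mathbb S^{n-1}}\x^{\otimes 3}\operatorname{d}\mu(\x)$. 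Thus $(\B,\mu)$ is feasible for \eqref{eq:non-opt-rank}.

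For ``$\supseteq$'', let $(\B,\mu)$ be feasible for \eqref{eq:non-opt-rank}. Since $\|\mu\|_0\le r<\infty$, the measure is finitely supported, so it is $r$-atomic and can be written as in \eqref{eq:finite}, $\mu=\sum_{i=1}^s\lambda_i\delta_{\x_i}$ with $s\le r$, $\lambda_i>0$ and $\x_i\in\mathbb S^{n-1}$. Integrating then yields $\B\simeq\sum_{i=1}^s\lambda_i\x_i^{\otimes 3}$, which is a rank-one decomposition of length $s\le r$ with positive weights and unit vectors, and therefore $\rk(\B)\le s\le r$. Hence $\B$ is feasible for \eqref{eq:best-r}.

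With the feasible regions identified and the objective unchanged, I would close the argument by transferring optimality in both directions: if $\B^\star$ solves \eqref{eq:best-r}, then any representing measure $\mu^\star$ constructed as above makes $(\B^\star,\mu^\star)$ feasible for \eqref{eq:non-opt-rank} and attaining the common optimal value, hence optimal; conversely, if $(\B^\star,\mu^\star)$ solves \eqref{eq:non-opt-rank}, then $\B^\star$ is feasible for \eqref{eq:best-r} with the same value and is optimal there. I expect no deep obstacle here; the only points requiring care are bookkeeping ones, namely checking that the rank convention in \eqref{eq:non-decomp}---strictly positive weights together with unit-norm vectors---matches verbatim the data of a nonnegative $r$-atomic measure, and that $\|\mu\|_0\le r$ genuinely forces finite support, ruling out any continuous or higher-multiplicity part. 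Because the order three is odd, a sign on any vector can be absorbed by passing to its antipode, so even a signed or unnormalized representation would reduce to this form; since both descriptions already use positive weights, no such adjustment is in fact needed and the feasible sets coincide on the nose.
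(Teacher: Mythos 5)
Your proof is correct and follows essentially the same route as the paper: the paper's proof is a one-line appeal to the ``preceding discussions,'' which are exactly the correspondence \eqref{eq:moment-rep} between rank-one decompositions with positive weights and unit vectors and finitely supported (atomic) measures as in \eqref{eq:finite} that you spell out via the two inclusions. Your explicit verification that $\|\mu\|_0\leq r$ forces an $s$-atomic representation with $s\leq r$ and strictly positive weights, and the remark that odd order absorbs signs, merely makes rigorous what the paper leaves implicit.
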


\begin{proof}
It follows from the preceding discussions.
\end{proof}

Note that if $r$ is chosen as $\rk(\A)$, then \eqref{eq:non-opt-rank} becomes the tensor rank decomposition problem. While $\rk(\A)$ is difficult to find \cite{Has:ran}, an upper bound $r$ is usually given, which then makes \eqref{eq:non-opt-rank} a tensor decomposition problem.

There are two difficult issues in solving \eqref{eq:non-opt-rank}. The first one is the constraint $\|\mu\|_0\leq r$, and the other one is the characterization for the set $\mathcal M(\mathbb S^{n-1})$.
For the latter, there are standard positive semidefinite relaxation schemes for approximating the set $\mathcal M(\mathbb S^{n-1})$ exteriorly \cite{L-01,L-09,nie2023moment,N-14-ATKMP,N-15}. For the former, we will develop a dual certification technique from optimization. Combing these two techniques, we will present a possibility, for the first time, to certify the global optimality of the third order symmetric tensor best rank-$r$ approximation problem \eqref{eq:best-r}.

\subsection{Semidefinite Relaxation for the Measure}\label{sec:sdp}
A positive semidefinite symmetric matrix $A\in\SS^2(\mathbb R^n)$ is written as $A\succeq 0$ or $A\in\SS^n_+$.
Let $k$ be a positive integer, and $\y\in\mathbb R^{\zeta(n+1,2k)}$ be the \textit{moment sequence} of a given measure $\mu\in\mathcal M(\mathbb S^{n-1})$ up to degree $2k$, i.e.,
\begin{equation*}
\y=\int_{\mathbb S^{n-1}}\x^{2k}
\operatorname{d}\mu (\mathbf x).
\end{equation*}
It is well-known that if the measure is finitely supported with $\|\mu\|_0=r$, then the rank of the truncated moment matrix $\operatorname{rank}(\mathcal M_k(\mathbf y))\leq r$ for every positive integer $k$.
More details on moment sequences and matrices are presented in Appendix~\ref{app:moment}.
We refer the reader to \cite{L-09} for more basic notions and concepts on semidefinite relaxation hierarchy of polynomial optimization.

With the observation on the rank constraint, we consider the following problem:
\begin{equation}\label{eq:non-opt-rank-relax}
\begin{array}{rl}
\min& \frac{1}{2}\|\mathcal A-\mathcal B\|^2\\[3pt]
\text{s.t.}& \mathcal B\simeq\y|_{\mathbb N^n_{=3}},\\
&\operatorname{rank}(\mathcal M_k(\mathbf y))\leq r,\\
&\mathcal M_k(\mathbf y)\succeq 0,\\
& \LL_k(\mathbf y)=0,\ \mathbf y\in\mathbb R^{\zeta(n+1,2k)},
\end{array}
\end{equation}
where $k\geq 2$, $\mathcal M_k(\mathbf y)\in\operatorname{S}^2(\mathbb R^{\zeta(n+1,k)})$ represents the $k$-th moment matrix of the moment sequence $\mathbf{y}$, $\LL_k(\mathbf y)\in\operatorname{S}^{\zeta(n+1,k-1)}$ represents the $(k-1)$-th localizing matrix of $1-\x^\mathsf{T}\x$ at $\mathbf{y}$.

The next result is a basis for the subsequent analysis.

\begin{proposition}\label{prop:exact}
Let $k>1$ and $\mathbf y^*\in \mathbb R^{\zeta(n+1,2k)}$ be an optimal solution of \eqref{eq:non-opt-rank-relax}. If the $k$-th flatness condition holds (cf.\ Appendix~\ref{sec:flatness}), i.e.,
\begin{equation}\label{eq:flatness-rank}
\operatorname{rank}(\mathcal M_k(\mathbf y^*))=\operatorname{rank}(\mathcal M_{k-1}(\mathbf y^*)),
\end{equation}
then $\mathbf y^*$ is the truncated moment sequence of a unique $\operatorname{rank}(\mathcal M_k(\mathbf y^*))$-atomic measure and $\B$ is a best rank-$r$ approximation of the given tensor $\mathcal A$, i.e., a global minimizer of \eqref{eq:best-r}.
\end{proposition}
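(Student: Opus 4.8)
The plan is to split the argument along the two asserted conclusions, treating the first as an invocation of truncated moment theory and the second as an optimization sandwich. First I would establish the atomic representation of $\mathbf{y}^*$ from the flatness hypothesis, and then exploit it to show that the relaxation \eqref{eq:non-opt-rank-relax} is in fact attained by a point that is genuinely feasible for the moment reformulation \eqref{eq:non-opt-rank}; optimality then transfers back to \eqref{eq:best-r} through Proposition~\ref{prop:moment-reformulate}.

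For the first conclusion, the key tool is the flat extension theorem recalled in Appendix~\ref{sec:flatness}. Since $\mathbf{y}^*$ satisfies $\mathcal{M}_k(\mathbf{y}^*)\succeq 0$ together with the flatness condition \eqref{eq:flatness-rank}, the truncated sequence admits a unique flat extension to all orders, and the associated linear functional is represented by a unique $s$-atomic measure with $s=\operatorname{rank}(\mathcal{M}_k(\mathbf{y}^*))$. The localizing equality $\LL_k(\mathbf{y}^*)=0$ for the polynomial $1-\mathbf{x}^\mathsf{T}\mathbf{x}$ forces every atom onto the variety $\{1-\mathbf{x}^\mathsf{T}\mathbf{x}=0\}=\mathbb S^{n-1}$, so the representing measure has the form $\mu^*=\sum_{i=1}^s\lambda_i\delta_{\mathbf{x}_i}$ with $\mathbf{x}_i\in\mathbb S^{n-1}$ and $\lambda_i>0$. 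This yields the first assertion, including uniqueness.

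For the second conclusion, I would first verify that \eqref{eq:non-opt-rank-relax} is a bona fide relaxation of \eqref{eq:non-opt-rank}: any $\mu$ feasible for \eqref{eq:non-opt-rank} with $\|\mu\|_0\leq r$ yields a moment sequence $\mathbf{y}$ meeting every constraint of \eqref{eq:non-opt-rank-relax}, since $\mathcal{M}_k(\mathbf{y})\succeq 0$ and $\operatorname{rank}(\mathcal{M}_k(\mathbf{y}))\leq\|\mu\|_0\leq r$ hold for any finitely supported measure and $\LL_k(\mathbf{y})=0$ holds because $\operatorname{supp}(\mu)\subseteq\mathbb S^{n-1}$, while the objective depends only on $\mathbf{y}|_{\mathbb N^n_{=3}}$, which equals the degree-three moments of $\mu$. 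Hence the optimal value of \eqref{eq:non-opt-rank-relax} is at most that of \eqref{eq:non-opt-rank}. Conversely, the measure $\mu^*$ produced above has $\|\mu^*\|_0=s\leq r$ by the rank constraint of \eqref{eq:non-opt-rank-relax} and support in $\mathbb S^{n-1}$, so $\mu^*$ is feasible for \eqref{eq:non-opt-rank}; moreover its degree-three moments coincide with $\mathbf{y}^*|_{\mathbb N^n_{=3}}\simeq\mathcal{B}$, so its objective in \eqref{eq:non-opt-rank} equals the optimal value of \eqref{eq:non-opt-rank-relax}. Sandwiching these inequalities shows the two optimal values agree and that $\mu^*$ attains the optimum of \eqref{eq:non-opt-rank}; by Proposition~\ref{prop:moment-reformulate} this is equivalent to \eqref{eq:best-r}, so $\mathcal{B}$ is a best rank-$r$ approximation of $\mathcal{A}$.

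The main obstacle is the first conclusion, which rests entirely on having the flat extension theorem in the sharp form needed here: one must ensure that flatness together with positive semidefiniteness yields an actual \emph{representing measure} (not merely a flat extension of the matrix), that the localizing equality pins the support exactly to $\mathbb S^{n-1}$, and that the atom count is precisely $\operatorname{rank}(\mathcal{M}_k(\mathbf{y}^*))$ with uniqueness. The optimization sandwich in the second part is then routine once feasibility of $\mu^*$ and the relaxation inequality are in place; the only point requiring care is that the objective is a function of the degree-three slice $\mathbf{y}|_{\mathbb N^n_{=3}}$ alone, so that passing from $\mathbf{y}^*$ to the representing measure $\mu^*$ leaves the objective value unchanged.
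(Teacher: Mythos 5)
Your proposal is correct and follows essentially the same route as the paper's proof: invoke the Curto--Fialkow flat extension theorem (with the localizing condition $\mathcal L_k(\mathbf y^*)=0$ pinning the atoms to $\mathbb S^{n-1}$) to get the unique $\operatorname{rank}(\mathcal M_k(\mathbf y^*))$-atomic representing measure, note that the rank constraint makes the resulting tensor feasible for \eqref{eq:non-opt-rank} with unchanged objective value, and conclude optimality since \eqref{eq:non-opt-rank-relax} is a relaxation of \eqref{eq:non-opt-rank}. The only difference is that you spell out the relaxation verification (feasible measures yield feasible moment sequences, and the objective depends only on the degree-three slice), which the paper asserts without detail.
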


\begin{proof}
First of all, problem \eqref{eq:non-opt-rank-relax} is a relaxation of the problem \eqref{eq:non-opt-rank}. Thus, if there is an optimal solution $\B$ of \eqref{eq:non-opt-rank-relax} such that it is a feasible solution of \eqref{eq:non-opt-rank}, then it must be an optimal solution of the problem \eqref{eq:non-opt-rank}.

Since the sequence $\mathbf y^*$ satisfies the flatness condition, by a well-known result of Curto and Fialkow \cite{Curto-Fialkow:tkm}, it follows that the sequence $\mathbf y^*$ admits a unique measure supported by $\mathbb S^{n-1}$, which is $\operatorname{rank}(\mathcal M_k(\mathbf y^*))$-atomic. This, together with the rank constraint $\operatorname{rank}(\mathcal M_k(\mathbf y))\leq r$, implies that $\mathcal B^*\simeq\mathbf y^*|_{\mathbb N^n_{=3}}$ is a tensor with rank at most $r$, which is thus a feasible solution of \eqref{eq:non-opt-rank} with exactly the same objective function value as that of \eqref{eq:non-opt-rank-relax}.
\end{proof}


There is a method to extract the support of a measure whenever the flatness is satisfied \cite{HL-05,L-09}. Thus, an optimal solution for \eqref{eq:best-r} which is a tensor of rank at most $r$ can be computed, if an optimal solution of \eqref{eq:non-opt-rank-relax} with the flatness condition \eqref{eq:flatness-rank} being satisfied. In our numerical computation, the method in \cite{HL-05} is adopted.
\section{Rank Constrained Matrix Optimization}\label{sec:matrix}
The problem~\eqref{eq:non-opt-rank-relax} is a nonconvex optimization problem with rank constraint, which is NP-hard in general. Nonetheless, a much harder part is \blue{to} certify the global optimality for a candidate of \eqref{eq:non-opt-rank-relax}. Thus, Proposition~\ref{prop:exact} can merely be utilized in few peculiar scenarios.
In order to utilize a dual certificate for global optimality for a wider class of problems, we will propose a carefully designed variation for it.

\subsection{Reformulation}\label{sec:reform}
We consider the following optimization problem
\begin{equation}\label{eq:non-opt-rank-reform}
\begin{array}{rl}
\min& \psi(B,X):=  \frac{1}{2}\|M(\A)-B\|^2+\sigma\langle E_0,X\rangle
\\[3pt]
\text{s.t.}
&B-\mathcal P_k(\y)=0,\\
& X-\mathcal M_k(\y)=0,\\
& \mathcal L_k(\y)=0,\\
&\operatorname{rank}(B)\leq r,\\
&X\succeq 0,
\end{array}
\end{equation}
where $\sigma\geq 0$ is a \textit{control parameter}, $\mathcal P_k(\y)$ is the projection onto the block sub-matrix of the $k$-th extended moment matrix $\mathcal G_k(\y)$ generated by $\y$ corresponding to the $M(\A)$ block (cf.\ \eqref{eq:matrix-structure} and \eqref{eq:matrix-tensor}),  and $E_0$ is the symmetric positive semidefinite diagonal matrix with the upper left
most  element being one and the others being zeros.

Compared with \eqref{eq:non-opt-rank-relax}, both the parameter $\sigma$ and the rank constraint on $B$ instead of $\mathcal{M}_k(\y)$ are for dual certificate reasons which will be addressed later. The parameter $\sigma$ is also for numerical considerations
 (cf.\ Lemma~\ref{lem:strict}).
In general, both $\sigma>0$ and $\sigma=0$ are allowable in problem \eqref{eq:non-opt-rank-reform}.
Actually, if $\sigma=0$ is chosen, then \eqref{eq:non-opt-rank-reform} is \eqref{eq:non-opt-rank-relax} except that the constraint $\operatorname{rank}(B)\leq r$
is employed instead of the constraint $\operatorname{rank}(\mathcal M_k(\y))\leq r$. Since the matrix $B$ in \eqref{eq:non-opt-rank-reform} is a block sub-matrix of $\mathcal G_k(\y)$ (which is equivalent to $\M_k(\y)$ in the sense of Lemma~\ref{lem:moment}), problem \eqref{eq:non-opt-rank-reform} is a relaxation of \eqref{eq:non-opt-rank-relax}. Nevertheless, exact relaxation results will be shown in Theorem~\ref{thm:sub-optimal}. For general $\sigma>0$, quantified quasi-optimal approximation results will be given in Section~\ref{sec:quasi-optimal}.

Problem~\eqref{eq:non-opt-rank-reform} is called the \textit{$k$-th relaxation} of problem \eqref{eq:best-r}. Since $k\geq 2$, the second relaxation is called the \textit{basic relaxation}.
For the optimization problem \eqref{eq:non-opt-rank-reform}, $X$ and $B$ are determined once $\y$ is given. Thus, for simplicity, unless otherwise stated, only the variable $\y$ is referred when we talk about feasible or optimal solutions.
Note that the feasible set of \eqref{eq:non-opt-rank-reform} is closed and the objective function is a polynomial which is bounded from below on the feasible set.
Moreover, we can prove the following result.
\begin{proposition}[Solvability]\label{prop:level}
Each level set of the feasible set of the optimization problem \eqref{eq:non-opt-rank-reform} is bounded for $\sigma>0$, and there is an optimizer of \eqref{eq:non-opt-rank-reform} for each $\sigma\geq 0$.
\end{proposition}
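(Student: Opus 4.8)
The plan is to invoke the Weierstrass theorem. The feasible set of \eqref{eq:non-opt-rank-reform} is already known to be closed and the objective $\psi$ is a continuous function that is bounded below on it, so it suffices to exhibit a nonempty compact sublevel set. For $\sigma>0$ I would establish coercivity directly, whereas the borderline case $\sigma=0$ needs a separate limiting argument because $\psi$ then ignores the total mass $\langle E_0,X\rangle$ and is no longer coercive in $\y$.

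The heart of the $\sigma>0$ case is the a~priori bound $|y_\alpha|\le y_{\0}$ for every $|\alpha|\le 2k$ on the feasible set, where $y_{\0}=\langle E_0,\mathcal M_k(\y)\rangle$ is the zeroth moment. First I would read off from $\mathcal L_k(\y)=0$ the recursion $y_\gamma=\sum_{i=1}^n y_{\gamma+2\e_i}$, valid for all $|\gamma|\le 2(k-1)$, which is merely the vanishing of the localizing matrix of $1-\x^\mathsf{T}\x$. Weighting by multinomial coefficients and summing, an easy induction on $d$ then yields $\sum_{|\alpha|=d}\binom{d}{\alpha}y_{2\alpha}=y_{\0}$ for every $0\le d\le k$. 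Since $\mathcal M_k(\y)\succeq0$ forces the diagonal entries $y_{2\alpha}\ge0$ and the multinomial coefficients are at least one, each $y_{2\alpha}$ lies in $[0,y_{\0}]$; the $2\times2$ minor inequality $y_{\alpha+\beta}^2\le y_{2\alpha}y_{2\beta}$ then propagates the bound to every off-diagonal entry, giving $|y_\alpha|\le y_{\0}$ throughout. On the sublevel set $\{\psi\le c\}$ the inequality $\sigma y_{\0}\le\psi\le c$ forces $y_{\0}\le c/\sigma$, so $\y$ is confined to a ball; together with closedness this makes the sublevel set compact and a minimizer exists.

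For $\sigma=0$ the objective collapses to $\tfrac12\|M(\A)-B\|^2$ with $B=\mathcal P_k(\y)$, a function of the finitely many degree-three entries alone. Along a minimizing sequence the blocks $B^{(j)}$ stay bounded, so after passing to a subsequence $B^{(j)}\to B^\ast$ with $\rk(B^\ast)\le r$ (the rank constraint is closed) and objective value equal to the infimum; the remaining task is to realise $B^\ast$ as $\mathcal P_k(\y^\ast)$ for a feasible $\y^\ast$. Here I would use the fact, established in the previous paragraph, that the base $\{\y:\mathcal M_k(\y)\succeq0,\ \mathcal L_k(\y)=0,\ y_{\0}=1\}$ of the feasibility cone is compact, writing $\y^{(j)}=t_j\hat\y^{(j)}$ with $\hat\y^{(j)}$ of zeroth moment one. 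If the scalars $t_j$ remain bounded, the whole sequence converges to a feasible $\y^\ast$ with $\mathcal P_k(\y^\ast)=B^\ast$ and we are done; if $t_j\to\infty$, I would trim the excess mass by subtracting cone directions that annihilate the degree-three block, i.e.\ elements of $\{\db:\mathcal M_k(\db)\succeq0,\ \mathcal L_k(\db)=0,\ \mathcal P_k(\db)=0\}$, which lowers $y_{\0}$ without altering $B^{(j)}$ and reduces us to the bounded case.

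The main obstacle is precisely this last point: excluding an escape to infinity in the total mass when $\sigma=0$, equivalently showing that the projected set $\{\mathcal P_k(\y):\y\text{ feasible},\ \rk(\mathcal P_k(\y))\le r\}$ is closed. This is exactly where the De Silva--Lim nonexistence phenomenon could intrude, so the delicate step is to certify that the trimming above always terminates with a uniformly bounded $y_{\0}$. I expect to settle it through the compact-base and recession-cone structure of the spectrahedral feasibility cone, together with convexity of the minimal-mass value function $B\mapsto\min\{y_{\0}:\mathcal P_k(\y)=B\}$, rather than through any additional assumption.
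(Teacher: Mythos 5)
Your $\sigma>0$ argument is correct and is essentially the paper's: the paper likewise uses $\mathcal L_k(\y)=0$ together with $\mathcal M_k(\y)\succeq 0$ to bound all moments by $y_{\0}$ (working out $k=2$ explicitly and sketching an induction for $k>2$), and your multinomial identity $\sum_{|\alpha|=d}\binom{d}{\alpha}y_{2\alpha}=y_{\0}$ combined with the $2\times 2$ minor inequality $y_{\alpha+\beta}^2\leq y_{2\alpha}y_{2\beta}$ is a clean and complete way to carry out that induction. The problem is the $\sigma=0$ case, where your proof has a genuine gap that you yourself flag but do not close. The trimming step is not justified as stated: writing $\y^{(j)}=\y_1+\db$ with $\db$ in the cone $\{\db:\mathcal M_k(\db)\succeq 0,\ \mathcal L_k(\db)=0,\ \mathcal P_k(\db)=0\}$ requires that the \emph{remainder} $\y^{(j)}-\db$ stay in the feasibility cone, and subtracting a cone element does not preserve $\mathcal M_k(\y^{(j)}-\db)\succeq 0$ in general. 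Your language of ``trimming excess mass'' implicitly treats a feasible $\y$ as the moment sequence of an actual measure, but feasibility of the relaxation (psd moment matrix plus vanishing localizing matrix) does not provide a representing measure without flatness. Moreover, since $\ker\mathcal P_k$ meets the cone nontrivially (e.g., the truncated moment vector of $\delta_{\x}+\delta_{-\x}$ has vanishing odd moments), the compact-base argument alone does not give closedness of the linear image, and the rank constraint destroys the convexity you would need for the value-function route. So the escape-to-infinity scenario is exactly where your proof stalls.

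The paper closes this case with a short idea that makes the recession analysis unnecessary: one never takes limits of the moment vectors $\y^{(j)}$ at all. The entries of $\mathcal P_k(\y)$ are precisely the degree-three moments arranged in the structured block \eqref{eq:matrix-structure}, so every $B^{(j)}$, and hence the limit $B^\ast$, is of the form $M(\B^\ast)$ for a third order symmetric tensor $\B^\ast$, with $\operatorname{rank}(B^\ast)\leq r$ by lower semicontinuity of matrix rank. Now invoke the fact (cf.\ \eqref{eq:non-decomp} and \cite{CGLM-08}) that \emph{every} symmetric tensor admits a finite rank-one decomposition $\B^\ast=\sum_i\lambda_i\x_i^{\otimes 3}$ with $\lambda_i>0$ and $\|\x_i\|=1$; the atomic measure $\mu=\sum_i\lambda_i\delta_{\x_i}$ on $\mathbb S^{n-1}$ yields a truncated moment vector $\bar\y$ that automatically satisfies $\mathcal M_k(\bar\y)\succeq 0$, $\mathcal L_k(\bar\y)=0$, and $\mathcal P_k(\bar\y)=B^\ast$. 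The number of atoms may far exceed $r$, but that is harmless: the constraint in \eqref{eq:non-opt-rank-reform} is on the matrix rank of $B$, not the tensor rank of $\B$, which is also why the De Silva--Lim nonexistence phenomenon you worry about cannot intrude here. Thus the $B$-projection of the level set is closed (and bounded), and since for $\sigma=0$ the objective depends on $B$ alone, Weierstrass finishes the proof. You should replace your trimming argument by this direct lifting of the limit point.
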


\begin{proof}
If $\sigma>0$, we have that in each level set of the feasible set of problem \eqref{eq:non-opt-rank-reform}, both $B$ and $y_{\0}$ must be bounded.
It follows from the constraint $\mathcal L_k(\y)=0$ that
\[
y_{\0}=\sum_{i=1}^ny_{2\mathbf e_i}.
\]
It then follows from the constraint $X\succeq 0$ that all $y_{2\mathbf e_i}$ are nonnegative and bounded. In turn, it follows that all $y_{\alpha}$ with $|\alpha|=2$ are bounded by the positive semidefiniteness of $X$. The boundedness of $y_{\alpha}$ with $|\alpha|=1$ follows from the boundedness of the matrix $B$ and the constraint $\mathcal L_k(\y)=0$. In the following, we show that each $y_{2\mathbf e_i+2\mathbf e_j}$ is bounded, which will imply the boundedness of all $y_{\alpha}$ with $|\alpha|=4$ by the positive semidefinitenss of $X$.
Since it follows from $\mathcal L_k(\y)=0$ that
\[
y_{2\mathbf e_i}=\sum_{j=1}^ny_{2\mathbf e_i+2\mathbf e_j},
\]
the boundedness of the left hand side and the positive semidefiniteness of $X$ imply the desired result. This proves the boundedness for the case $k=2$.

The general case for $k>2$ follows from a similar argument through induction. We omit the details. The solvability for the case $\sigma>0$ then follows immediately.

The solvability for the case for $\sigma=0$ has a different argument. It is clear that the projection of the level set of the feasible set onto the $B$ part is bounded. In the following, we show that it is also closed. Then the conclusion follows.

For each given $B$ in the closure of this given projection, it corresponds to a third order symmetric tensor $\B$. Each third order symmetric tensor has a rank decomposition as \eqref{eq:non-decomp}, corresponding to a finite measure $\mu=\sum_{i=1}^r\lambda_i\x_i$. Let $\bar\y$ be the moment sequence generated by $\mu$ and $(\bar X,\bar\y,\bar B)$ the defined point by the first two constraints in \eqref{eq:non-opt-rank-reform}.
It is clear that all the constraints are satisfied. Moreover, the resulting feasible point $(\bar X,\bar\y,\bar B)$ has the same objective function value with the given $B$, since $B=\bar B$. Thus, the point $(\bar X,\bar\y,\bar B)$ is in the level set of the feasible set from which the projection is performed. Consequently, $B$ is in the projection and the projected set is closed as desired. The proof is completed.
\end{proof}

\subsection{Duality and Feasibility}\label{sec:dual-problem-lag}
In this section, we discuss the dual problem of \eqref{eq:non-opt-rank-reform}.
To that end, the projection of a given matrix onto the set of matrices of rank at most $r$ is involved. Let $\operatorname{R}(r)\subseteq\mathbb R^{m\times n}$ be the set of matrices in $\mathbb R^{m\times n}$ with rank at most $r\leq \min\{m,n\}$. We will use $\Pi_{\operatorname{R}(r)}(A)\subset\operatorname{R}(r)$ to denote the optimal solution set for the problem
\[
\min_X\ \frac{1}{2}\|A-X\|^2\ \ \text{s.t. }\rk(X)\leq r.
\]
$\Pi_{\operatorname{R}(r)}(A)$ can be a set with infinitely many elements, but each element $X\in \Pi_{\operatorname{R}(r)}(A)$ has the same norm $\|X\|$. Thus, $\|\Pi_{\operatorname{R}(r)}(A)\|$ can be used to define this common constant.
We refer the reader to Appendix~\ref{app:low-rank} for more details and the necessary nonsmooth analysis for this projection.

\begin{proposition}[Lagrangian Dual Problem]\label{prop:lagrangian-problem}
The Lagrangian dual problem of \eqref{eq:non-opt-rank-reform} is
\begin{equation}\label{eq:lagrange-dual-min}
\begin{array}{rl}
\min& \frac{1}{2}\|\Pi_{\operatorname{R}(r)}(M(\A)-U)\|^2\\[3pt]
\operatorname{s.t.}& \mathcal M_k^*(Z)+\mathcal P_k^*(U)-\mathcal L_k^*(W)=\sigma\mathcal M_k^*(E_0),\\
 &Z\succeq 0.
\end{array}
\end{equation}
\end{proposition}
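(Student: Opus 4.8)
The plan is to compute the Lagrangian dual in the standard way, dualizing only the three linear equalities and leaving the two remaining constraints (the rank bound on $B$ and the semidefiniteness of $X$) inside the inner infimum. I would introduce free multipliers $U$, $S$, $W$ for $B-\mathcal P_k(\y)=0$, $X-\mathcal M_k(\y)=0$ and $\mathcal L_k(\y)=0$ respectively, and form
\[
\begin{aligned}
L(B,X,\y;U,S,W)={}&\tfrac12\|M(\A)-B\|^2+\sigma\langle E_0,X\rangle+\langle U,B-\mathcal P_k(\y)\rangle\\
&{}+\langle S,\mathcal M_k(\y)-X\rangle+\langle W,\mathcal L_k(\y)\rangle.
\end{aligned}
\]
The dual function is $g(U,S,W):=\inf\{L:\operatorname{rank}(B)\le r,\ X\succeq 0,\ \y\ \text{free}\}$, and since $L$ splits additively into independent blocks in $B$, $X$ and $\y$, I would minimize each block separately.

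The $\y$-block is linear, equal to $\langle -\mathcal P_k^*(U)+\mathcal M_k^*(S)+\mathcal L_k^*(W),\y\rangle$ after passing to adjoints; over a free $\y$ its infimum is $0$ when the bracket vanishes and $-\infty$ otherwise, which forces the dual-stationarity equation. The $X$-block is $\langle \sigma E_0-S,X\rangle$, whose infimum over $X\succeq 0$ is $0$ exactly when $\sigma E_0-S\succeq 0$ and $-\infty$ otherwise; setting $Z:=\sigma E_0-S$ records this as $Z\succeq 0$. Substituting $S=\sigma E_0-Z$ into the $\y$-stationarity equation collapses it to $\mathcal M_k^*(Z)+\mathcal P_k^*(U)-\mathcal L_k^*(W)=\sigma\mathcal M_k^*(E_0)$, which together with $Z\succeq 0$ is precisely the feasible set of \eqref{eq:lagrange-dual-min}.

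The only substantive minimization is the $B$-block, $\inf_{\operatorname{rank}(B)\le r}\bigl[\tfrac12\|M(\A)-B\|^2+\langle U,B\rangle\bigr]$. Completing the square turns the bracket into $\tfrac12\|B-(M(\A)-U)\|^2+\tfrac12\|M(\A)\|^2-\tfrac12\|M(\A)-U\|^2$, so the infimum is attained (the set $\operatorname{R}(r)$ is closed and the objective is coercive in $B$) at a best rank-$r$ approximation of $C:=M(\A)-U$. Invoking the orthogonality $\langle C-\Pi_{\operatorname{R}(r)}(C),\,\Pi_{\operatorname{R}(r)}(C)\rangle=0$ of the Eckart--Young projection (Appendix~\ref{app:low-rank}) gives $\|C-\Pi_{\operatorname{R}(r)}(C)\|^2=\|C\|^2-\|\Pi_{\operatorname{R}(r)}(C)\|^2$, so the $B$-block equals $\tfrac12\|M(\A)\|^2-\tfrac12\|\Pi_{\operatorname{R}(r)}(M(\A)-U)\|^2$. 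Here I would use the fact recorded just before the proposition that all elements of the possibly set-valued $\Pi_{\operatorname{R}(r)}(C)$ share a common norm, so that $\|\Pi_{\operatorname{R}(r)}(\cdot)\|$ is unambiguous.

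Assembling the three blocks yields $g(U,Z,W)=\tfrac12\|M(\A)\|^2-\tfrac12\|\Pi_{\operatorname{R}(r)}(M(\A)-U)\|^2$ on the dual-feasible set and $-\infty$ off it. The Lagrangian dual $\max g$ is then equivalent, after discarding the additive constant $\tfrac12\|M(\A)\|^2$ and rewriting the maximization of $-\tfrac12\|\Pi_{\operatorname{R}(r)}(M(\A)-U)\|^2$ as the minimization of $\tfrac12\|\Pi_{\operatorname{R}(r)}(M(\A)-U)\|^2$, to problem \eqref{eq:lagrange-dual-min}. I expect the step needing care to be the $B$-block: the rank constraint is nonconvex, so the clean closed-form value relies on the Eckart--Young characterization of the projection and on passing from a (non-unique) minimizer to its norm; every other block is the routine separable minimization of a Lagrangian. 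No strong-duality argument is required, since the proposition only identifies the dual problem, and that derivation remains valid irrespective of the primal's nonconvexity.
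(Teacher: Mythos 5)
Your proposal is correct and follows essentially the same route as the paper's proof: dualize the three linear equality constraints while keeping the rank and semidefiniteness constraints in the inner infimum, minimize the separable blocks (with the $B$-block resolved by completing the square and the Eckart--Young projection, and the $X$-block yielding the substitution $Z=\sigma E_0-S$, which matches the paper's $Z=\sigma E_0+V$ up to the sign convention on the multiplier), then pass to the equivalent minimization form by dropping the constant $\tfrac12\|M(\A)\|^2$. The only cosmetic difference is that you make explicit two points the paper leaves implicit — attainment of the $B$-block infimum and the Pythagorean identity $\|C-\Pi_{\operatorname{R}(r)}(C)\|^2=\|C\|^2-\|\Pi_{\operatorname{R}(r)}(C)\|^2$ — which is harmless.
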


\begin{proof}
The Lagrangian function of problem \eqref{eq:non-opt-rank-reform} is
\begin{multline*}
L(B,X,\y;U,V,W):=\frac{1}{2}\|M(\A)-B\|^2+\sigma\langle E_0,X\rangle\\
+\langle U,B-\mathcal P_k(\y)\rangle
+\langle V,X-\mathcal M_k(\y)\rangle
+\langle W,\mathcal L_k(\y)\rangle,
\end{multline*}
and problem \eqref{eq:non-opt-rank-reform} can be equivalently written as
\[
\min_{\operatorname{rank}(B)\leq r,\ X\succeq 0,\ \y}\ \ \quad\ \ \max_{U,V,W}L(B,X,\y;U,V,W).
\]

Let
\begin{equation*}
g(U,V,W):=\min_{X\succeq 0,\ \operatorname{rank}(B)\leq r,\ \y}L(B,X,\y;U,V,W).
\end{equation*}
The Lagrangian dual problem of \eqref{eq:non-opt-rank-reform} is then (cf.\ \cite{B-99})
\begin{equation*}
\max_{U,V,W} g(U,V,W).
\end{equation*}

By a direct calculation, we have
\begin{align*}
&\min_{X\succeq 0,\ \operatorname{rank}(B)\leq r,\ \y}L(B,X,\y;U,V,W)\\
=&\min_{\operatorname{rank}(B)\leq r}\big\{\frac{1}{2}\|M(\A)-B\|^2
+\langle U,B\rangle\big\}\\
&+\min_{X\succeq 0}\big\{\langle \sigma E_0+V, X\rangle\big\}
+\min_{\y}\big\{\langle W,\mathcal L_k(\y)\rangle-\langle V,\mathcal M_k(\y)\rangle-\langle U,\Pp_k(\y)\rangle\big\}\\
=&\min_{\operatorname{rank}(B)\leq r}\big\{\frac{1}{2}\big(\|B-(M(\A)-U)\|^2-\|M(\A)-U\|^2+\|M(\A)\|^2
\big)\big\}\\
&+\min_{X\succeq 0}\big\{\langle \sigma E_0+ V,X\rangle\big\}-\delta_{\{\mathbf 0\}}(\mathcal L_k^*(W)-\mathcal M_k^*(V)-\Pp_k^*(U))\\
=&\frac{1}{2}\big(\|M(\A)\|^2 - \|\Pi_{\operatorname{R}(r)}(M(\A)-U)\|^2\big)\\
&-\delta_{\operatorname{S}^n_+}( \sigma E_0+V)-\delta_{\{\mathbf 0\}}(\mathcal L_k^*(W)-\mathcal M_k^*(V)-\Pp_k^*(U)),
\end{align*}
where $\mathcal L_k^*, \mathcal M_k^*$ and $\Pp_k^*$ are the adjoint operators of $\mathcal L_k, \mathcal M_k$ and $\Pp_k$ respectively.
Therefore,
we have the dual problem
\begin{equation*}
\begin{array}{rl}
\max&\frac{1}{2}\|M(\A)\|^2 -\frac{1}{2}\|\Pi_{\operatorname{R}(r)}(M(\A)-U)\|^2
\\[3pt]
\text{s.t.}&  \mathcal L_k^*(W)-\mathcal M_k^*(V)-\mathcal P_k^*(U)=\mathbf 0,\\
 &\sigma E_0+V\succeq 0.
\end{array}
\end{equation*}
In a more concise form, it is
\begin{equation}\label{eq:lagrange-dual-compact}
\begin{array}{rl}
\max& \phi(U):=\frac{1}{2}\|M(\A)\|^2 -\frac{1}{2}\|\Pi_{\operatorname{R}(r)}(M(\A)-U)\|^2 \\[3pt]
\text{s.t.}&  \mathcal M_k^*(Z)+\mathcal P_k^*(U)-\mathcal L_k^*(W)=\sigma\mathcal M_k^*(E_0),\\
 &Z\succeq 0.
\end{array}
\end{equation}
We see that \eqref{eq:lagrange-dual-min} is actually the minimization formulation of \eqref{eq:lagrange-dual-compact}.
\end{proof}

Problem \eqref{eq:lagrange-dual-compact} (equivalently \eqref{eq:lagrange-dual-min}) is a convex optimization problem, as expected, but with a nonsmooth objective function
$\phi(U)$.

One advantage of the formulation \eqref{eq:lagrange-dual-min} is that we can interpret the constraint via sums of squares of polynomials.
\begin{lemma}[Dual Feasibility]\label{lem:feasibility}
A triplet $(U,W,Z)$ is a feasible solution of problem \eqref{eq:lagrange-dual-min} if and only if there exist a homogeneous polynomial $u(\x)$, and polynomials $w(\x)$ and $z(\x)$ with
\[
\operatorname{deg}(w(\x))\leq 2k-2,\ \operatorname{deg}(u(\x))=3,\ \text{and }\operatorname{deg}(z(\x))\leq 2k
\]
such that
\begin{equation}\label{eq:sum-of-squares}
z(\x)=(1-\x^\mathsf{T}\x)w(\x)-u(\x)+\sigma
\end{equation}
is a sum of squares of polynomials. Moreover, there is a correspondence between the triplets $(U,W,Z)$ and $(u(\x), w(\x), z(\x))$ as indicated in Appendix~\ref{sec:moment-matrix}.
\end{lemma}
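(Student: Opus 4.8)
The plan is to read the dual feasibility constraint, which lives in the space of truncated extended moment sequences (the dual of the coefficient space of polynomials of degree at most $2k$), as a single identity between polynomials, and then to convert the semidefinite constraint $Z\succeq 0$ into a sum-of-squares property. The engine is the observation that each of the three adjoint operators in the constraint sends a matrix to the coefficient vector of a specific polynomial built from a Gram-type quadratic form.

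First I would record the action of each adjoint in coefficient space. Writing $[\x]_j$ for the vector of monomials indexing the relevant block, the identities I want are: $\mathcal M_k^*(Z)$ is the coefficient sequence of $z(\x):=[\x]_k^\mathsf{T} Z[\x]_k$, of degree at most $2k$; $\mathcal L_k^*(W)$ is the coefficient sequence of $(1-\x^\mathsf{T}\x)w(\x)$ with $w(\x):=[\x]_{k-1}^\mathsf{T} W[\x]_{k-1}$ of degree at most $2k-2$; $\mathcal P_k^*(U)$ is the coefficient sequence of $u(\x):=[\x]_1^\mathsf{T} U[\x]_2$, which is homogeneous of degree $3$ precisely because the $M(\A)$-block is indexed by a degree-one monomial against a degree-two monomial (cf.\ \eqref{eq:matrix-structure}--\eqref{eq:matrix-tensor}); and $\sigma\mathcal M_k^*(E_0)$ is the coefficient sequence of the constant $\sigma$, since $E_0=e_0e_0^\mathsf{T}$ selects the constant monomial. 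Each of these is a direct bookkeeping computation from the definitions in Appendix~\ref{app:moment}, using the defining adjoint relation $\langle C,\mathcal O(\y)\rangle=\langle\mathcal O^*(C),\y\rangle$ for $\mathcal O\in\{\mathcal M_k,\mathcal L_k,\mathcal P_k\}$ together with the fact that pairing a coefficient vector against $\y$ evaluates the corresponding linear functional on moments.

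With this dictionary, the matrix equation $\mathcal M_k^*(Z)+\mathcal P_k^*(U)-\mathcal L_k^*(W)=\sigma\mathcal M_k^*(E_0)$ becomes, after matching coefficients degree by degree (all degrees involved are at most $2k$, which is consistent since $k\geq 2$), the polynomial identity $z(\x)+u(\x)-(1-\x^\mathsf{T}\x)w(\x)=\sigma$, that is, \eqref{eq:sum-of-squares}. It then remains to tie $Z\succeq 0$ to $z(\x)$ being a sum of squares, which is the classical Gram-matrix characterization: $Z\succeq 0$ if and only if $z(\x)=[\x]_k^\mathsf{T} Z[\x]_k$ is SOS. For the ``only if'' direction I would start from a feasible $(U,W,Z)$, set $z$ from the PSD Gram matrix $Z$ (hence SOS) and $u,w$ from $U,W$, and read the matrix equation as \eqref{eq:sum-of-squares}; for the ``if'' direction I would start from polynomials $(u,w,z)$ satisfying \eqref{eq:sum-of-squares} with $z$ SOS, take a PSD Gram matrix $Z$ of $z$ and any symmetric Gram representatives $U,W$ of $u,w$, and run the dictionary backwards to obtain feasibility.

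The only genuinely delicate point--and the reason the paper defers the precise bijection to Appendix~\ref{sec:moment-matrix}--is that the matrix-to-polynomial map is many-to-one: a given $z$ (resp.\ $w$, $u$) admits many Gram matrices, and dually the adjoints $\mathcal M_k^*,\mathcal L_k^*,\mathcal P_k^*$ have nontrivial kernels. I would therefore phrase the conclusion as the existence of \emph{some} triple $(u,w,z)$ rather than a canonical one, and take care that the homogenization underlying the extended moment framework (degree-$2k$ forms in $n+1$ variables versus polynomials of degree at most $2k$ in $n$ variables) respects the split of \eqref{eq:sum-of-squares} into its homogeneous components. Once the correspondence of Appendix~\ref{sec:moment-matrix} is in place, matching the homogeneous components of degrees $0,\dots,2k$ closes both implications, with the Gram/PSD equivalence supplying the remaining content.
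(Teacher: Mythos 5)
Your proposal is correct and takes essentially the same route as the paper's proof: the paper likewise sets $w(\x)=(\x^{k-1})^\mathsf{T}W\x^{k-1}$, $u(\x)=\x^\mathsf{T}U\x^{\otimes 2}$, $z(\x)=(\x^k)^\mathsf{T}Z\x^k$, reads the adjoint constraint $\mathcal M_k^*(Z)+\mathcal P_k^*(U)-\mathcal L_k^*(W)=\sigma\mathcal M_k^*(E_0)$ as the polynomial identity \eqref{eq:sum-of-squares}, and closes with the Gram-matrix characterization that $Z\succeq 0$ corresponds to $z(\x)$ being a sum of squares. Your explicit treatment of the converse direction and of the many-to-one Gram correspondence merely spells out what the paper compresses into ``the conclusion follows'' together with the reference to Appendix~\ref{sec:moment-matrix}.
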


\begin{proof}
Recall the sizes of the triplet $(U,W,Z)$, which are
\[
U\in\mathbb R^{n\times n^2},\ W\in \SS^2(\mathbb R^{\zeta(n+1,k-1)})\ \text{and }Z\in\SS^2(\mathbb R^{\zeta(n+1,k)}).
\]
Let $\x^k$ be the monomial basis up to order $k$ defined as in \eqref{eq:monomial}
and let $\x^{\otimes 2}$ be the extended monomial basis of order $2$ defined as in \eqref{eq:tensor-1}. Let
\[
w(\x):=(\x^{k-1})^\mathsf{T}W\x^{k-1},\ u(\x):=\x^\mathsf{T}U\x^{\otimes 2},\ \text{and }z(\x):=(\x^k)^\mathsf{T}Z\x^k.
\]
Then by the feasibility of $(U,W,Z)$, we have that
\[
(1-\x^\mathsf{T}\x)w(\x)+\sigma-u(\x)-z(\x)=0.
\]
Since $Z$ is a positive semidefinite matrix, which is equivalent to having
the polynomial $z(\x)$ being a sum of squares of polynomials \cite{L-09}, the conclusion follows.
\end{proof}

\begin{lemma}[Strict Feasibility]\label{lem:strict}
For any positive $\sigma>0$ and integer $k\geq 2$, problem \eqref{eq:lagrange-dual-min} is strictly feasible, i.e., there exists a triplet $(U,W,Z)$ with $Z\succ 0$ such that $\mathcal M_k^*(Z)+\mathcal P_k^*(U)-\mathcal L_k^*(W)=\sigma\mathcal M_k^*(E_0)$.
\end{lemma}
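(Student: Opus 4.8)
The plan is to reduce the statement to the sum-of-squares reformulation of Lemma~\ref{lem:feasibility} and then exhibit an explicit strictly feasible point at the polynomial level. By that lemma, a triplet $(U,W,Z)$ is feasible for \eqref{eq:lagrange-dual-min} exactly when the associated polynomials satisfy the identity \eqref{eq:sum-of-squares}, namely $z(\x)=(1-\x^\mathsf{T}\x)w(\x)-u(\x)+\sigma$. The crucial point, which I would record first, is that the defining equation $\mathcal M_k^*(Z)+\mathcal P_k^*(U)-\mathcal L_k^*(W)=\sigma\mathcal M_k^*(E_0)$ is an equation between the \emph{coefficient vectors} of $z$, $u$, $(1-\x^\mathsf{T}\x)w$ and $\sigma$; it therefore depends on $(U,W,Z)$ only through the polynomials they represent. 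Consequently, once polynomials $u,w,z$ of the admissible degrees satisfying the identity are found, any Gram matrices representing them give a feasible triplet, and we are free to choose the Gram matrix of $z$ so that $Z\succ 0$. Thus the task reduces to: produce $u,w,z$ satisfying the identity with $z$ a \emph{positive definite} sum of squares.

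Since the cubic $u(\x)=\x^\mathsf{T}U\x^{\otimes 2}$ is homogeneous of odd degree and cannot contribute to an even, positive definite $z$, the simplest choice is $U=0$, i.e. $u\equiv 0$, so that the identity becomes $z(\x)=(1-\x^\mathsf{T}\x)w(\x)+\sigma$. The guiding idea is to pick a positive definite SOS $z$ that equals $\sigma$ on the sphere $\mathbb S^{n-1}$; then $z-\sigma$ vanishes on $\{\x^\mathsf{T}\x=1\}$ and is divisible by $1-\x^\mathsf{T}\x$, which recovers $w$. Writing $t:=\x^\mathsf{T}\x$, the natural candidate is $z(\x)=\frac{\sigma}{k+1}\sum_{j=0}^{k}t^{j}$, which indeed equals $\sigma$ at $t=1$, and uses $\sigma>0$ essentially.

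Two verifications then complete the argument. First, a telescoping computation of $(1-t)\sum_{j=0}^{k-1}a_j t^j$ shows $z(\x)-\sigma=(1-t)w(\x)$ with $w(\x)=-\frac{\sigma}{k+1}\sum_{j=0}^{k-1}(k-j)\,t^{j}$, a polynomial of degree $2k-2$ as required; hence \eqref{eq:sum-of-squares} holds with $U=0$. Second, and this is the crux, one checks that $z$ has a positive definite Gram matrix. Using the multinomial expansion $t^{j}=(\x^\mathsf{T}\x)^{j}=\sum_{|\alpha|=j}\binom{j}{\alpha}\x^{2\alpha}$, the polynomial $z$ is a positive combination of squares of distinct monomials $\x^{\alpha}$, one of every multi-index $|\alpha|\le k$, so its diagonal Gram matrix $Z=\frac{\sigma}{k+1}\,\operatorname{Diag}\big(\binom{|\alpha|}{\alpha}\big)_{|\alpha|\le k}$ in the monomial basis is strictly positive, i.e. $Z\succ 0$. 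Taking any symmetric $W$ representing $w$ and $U=0$, Lemma~\ref{lem:feasibility} then delivers a feasible triplet with $Z\succ 0$, which is the desired strict feasibility.

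The only genuinely delicate point is the second verification: the real requirement is that $z$ be positive definite as a quadratic form in the \emph{full} monomial vector, not merely nonnegative on $\mathbb S^{n-1}$. This is exactly why the trivial choice $z=\sigma$ fails (it yields the rank-one $Z=\sigma E_0$), and why summing $t^{j}$ over all degrees $j=0,\dots,k$, which populates every diagonal block of the Gram matrix, is needed. Everything else—the degree bookkeeping ($\deg w\le 2k-2$, $\deg z\le 2k$) and the telescoping identity—is routine.
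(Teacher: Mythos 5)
Your proof is correct and is essentially the paper's argument: the paper also reduces to the SOS identity of Lemma~\ref{lem:feasibility}, takes $u\equiv 0$, and chooses $w(\x)=\sum_{j=0}^{k-1}\lambda_j(\x^\mathsf{T}\x)^j$ with $-\sigma<\lambda_0<\dots<\lambda_{k-1}<0$ so that $z=\sum_{i=0}^k\mu_i(\x^\mathsf{T}\x)^i$ with all $\mu_i>0$ admits a positive definite diagonal Gram matrix, and your specific choice is exactly the instance $\mu_i=\sigma/(k+1)$, i.e.\ $\lambda_j=-(k-j)\sigma/(k+1)$. The only addition is that you spell out, via the multinomial expansion, why such a $z$ has a strictly positive diagonal Gram matrix, a fact the paper asserts without detail.
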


\begin{proof}
First note that
for any given positive scalars $\mu_i$ ($i=0,\dots,k$), we can find a  positive definite
diagonal matrix $A$ such that
\[
(\x^k)^\mathsf{T}A\x^k=\sum_{i=0}^k\mu_i(\x^\mathsf{T}\x)^i.
\]
Thus, the conclusion will follow, by Lemma~\ref{lem:feasibility}, if we can find polynomials $w(\x)$ and $u(\x)$ such that
\[
(1-\x^\mathsf{T}\x)w(\x)+\sigma-u(\x)=\sum_{i=0}^k\mu_i(\x^\mathsf{T}\x)^i
\]
for positive $\mu_i$'s.
This can be fulfilled by taking $u(\x)=0$ and
\[
w(\x) := \sum_{i=0}^{k-1}\lambda_i(\x^\mathsf{T}\x)^i
\]
for any choices of $\lambda_i$'s such that
$
-\sigma<\lambda_0<\lambda_1<\dots<\lambda_{k-1}<0.
$
In this case
\[
\mu_0=\sigma+\lambda_0>0,\ \mu_i=\lambda_i-\lambda_{i-1}>0\ \text{for }i=1,\dots,k-1\ \text{and }\mu_k=-\lambda_{k-1}>0.
\]
This completes the proof.
\end{proof}

The strict feasibility does not hold for $\sigma=0$.
The cubic form $u(\x)$ either is zero or takes negative value on the sphere.
Thus, $z(\x)=(1-\x^\mathsf{T}\x)w(\x)-u(\x)$ either is identically zero or takes negative function value on the sphere.
Consequently, if $\sigma=0$, the feasibility condition \eqref{eq:sum-of-squares} forces $U=0$, and hence problem \eqref{eq:lagrange-dual-min} becomes a feasibility problem with constant objective function. Then the possibility for strong duality between \eqref{eq:non-opt-rank-reform} and \eqref{eq:lagrange-dual-compact} is weakened.
As a result, it is necessary for numerical reasons to impose positive $\sigma$ and therefore the relationship between the optimal solutions for \eqref{eq:non-opt-rank-reform} and those for the original best approximation problem \eqref{eq:best-r} should be established.

\subsection{Optimality}\label{sec:optimality}
The following conclusion is classical, which follows from the saddle point theorem \cite{B-99}.
\begin{proposition}[Lagrangian Duality]\label{prop:lag-duality}
Let $(B,X,\y)$ and $(U,V,W)$ be feasible solutions of problems~\eqref{eq:non-opt-rank-reform} and \eqref{eq:lagrange-dual-min} respectively. Then we have
\begin{equation*}
\psi(B,X)\geq\phi(U).
\end{equation*}
If $\psi(B,X)=\phi(U)$, then both $(B,X,\y)$ and $(U,V,W)$ are optimal solutions of problems~\eqref{eq:non-opt-rank-reform} and \eqref{eq:lagrange-dual-min} respectively.
\end{proposition}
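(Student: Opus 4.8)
The plan is to establish this as a standard instance of Lagrangian weak duality, exactly along the lines already used to derive the dual in Proposition~\ref{prop:lagrangian-problem}. First I would evaluate the Lagrangian $L(B,X,\y;U,V,W)$ at a primal feasible triple $(B,X,\y)$. Because such a triple satisfies the three equality constraints $B-\mathcal P_k(\y)=0$, $X-\mathcal M_k(\y)=0$ and $\mathcal L_k(\y)=0$, all three multiplier terms $\langle U,\cdot\rangle$, $\langle V,\cdot\rangle$, $\langle W,\cdot\rangle$ vanish, so that $L(B,X,\y;U,V,W)=\psi(B,X)$ identically in the multipliers.

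Next I would bound $\psi(B,X)$ from below by the dual function. Since $(B,X,\y)$ is in particular a feasible point for the inner minimization defining $g$, one has $\psi(B,X)=L(B,X,\y;U,V,W)\geq \min_{\operatorname{rank}(B')\leq r,\,X'\succeq0,\,\y'}L(B',X',\y';U,V,W)=g(U,V,W)$. The explicit separable evaluation of this minimum was already carried out in the proof of Proposition~\ref{prop:lagrangian-problem}: it produces $g(U,V,W)=\tfrac12\|M(\A)\|^2-\tfrac12\|\Pi_{\operatorname{R}(r)}(M(\A)-U)\|^2$ minus the indicator terms $\delta_{\operatorname{S}^n_+}(\sigma E_0+V)$ and $\delta_{\{\mathbf 0\}}(\mathcal L_k^*(W)-\mathcal M_k^*(V)-\mathcal P_k^*(U))$. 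When $(U,V,W)$ is dual feasible — equivalently, writing $Z=\sigma E_0+V$, when the constraints of \eqref{eq:lagrange-dual-min} hold — these two indicators vanish, so $g(U,V,W)=\phi(U)$. Combining the two displays yields the asserted inequality $\psi(B,X)\geq\phi(U)$.

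For the second statement I would argue by the usual squeezing. The weak inequality just proved holds for every primal feasible point against the fixed $U$, and for the fixed $(B,X)$ against every dual feasible point. Hence, if $\psi(B,X)=\phi(U)$, then for an arbitrary dual feasible $(U',V',W')$ we get $\phi(U')\leq\psi(B,X)=\phi(U)$, so $(U,V,W)$ is dual optimal; symmetrically, for an arbitrary primal feasible $(B',X',\y')$ we get $\psi(B',X')\geq\phi(U)=\psi(B,X)$, so $(B,X,\y)$ is primal optimal.

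I do not expect a genuine obstacle here, since weak duality is insensitive to the nonconvexity of the rank constraint and of $\phi$: the bounding argument never requires convexity. The only points demanding care are bookkeeping ones — keeping straight the correspondence $Z=\sigma E_0+V$ between the Lagrange multiplier $V$ of $X-\mathcal M_k(\y)=0$ and the variable $Z$ appearing in the stated dual \eqref{eq:lagrange-dual-min}, and invoking the completed-square identity $\min_{\operatorname{rank}(B)\leq r}\{\tfrac12\|M(\A)-B\|^2+\langle U,B\rangle\}=\tfrac12(\|M(\A)\|^2-\|\Pi_{\operatorname{R}(r)}(M(\A)-U)\|^2)$ that underlies the appearance of the projection term in $\phi$. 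Both are already in hand from Proposition~\ref{prop:lagrangian-problem}, so the proof reduces to assembling them.
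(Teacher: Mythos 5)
Your proposal is correct and is essentially the argument the paper intends: the paper gives no details, simply noting the result is classical and follows from the saddle point theorem of \cite{B-99}, and your write-up is precisely that classical weak-duality argument, correctly reusing the dual-function evaluation and the identification $Z=\sigma E_0+V$ from the proof of Proposition~\ref{prop:lagrangian-problem}. The squeeze argument for the optimality claim is likewise the standard one, so nothing further is needed.
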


While the primal problem \eqref{eq:non-opt-rank-reform} is a nonlinear semidefinite matrix optimization problem which is nonconvex due to the rank constraint, the dual problem \eqref{eq:lagrange-dual-min} is a nonlinear convex semidefinite matrix optimization problem. By the feasibility characterization in Lemma~\ref{lem:feasibility}, the optimality of \eqref{eq:lagrange-dual-min} can be concisely determined with the help of convex analysis \cite{R-70}.
\begin{proposition}[Optimality]\label{prop:solution}
We have that a feasible solution $(\bar U,\bar W, \bar Z)$ of \eqref{eq:lagrange-dual-min} is an optimal solution if there exists a vector $\bar \y\in\mathbb R^{\zeta(n+1,2k)}$ such that
\begin{eqnarray}\label{eq:optimality}
\begin{array}{l}
\mathcal L_k(\bar \y)=0,\ \mathcal M_k(\bar \y)\succeq 0,\ \langle\bar Z,\mathcal M_k(\bar \y)\rangle=0,\
\text{and }\\[3pt]
\mathcal P_k(\bar \y)\in\operatorname{conv}\big(\Pi_{\operatorname{R}(r)}(M(\A)-\bar U)\big).
\end{array}
\end{eqnarray}
It becomes also a necessary condition if $\sigma>0$.
\end{proposition}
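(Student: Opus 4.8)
The plan is to establish this as the standard optimality characterization for the convex dual problem \eqref{eq:lagrange-dual-min}, using the fact that \eqref{eq:lagrange-dual-min} is a convex program with a nonsmooth but well-understood objective and linear constraints. The strategy is to write down the first-order optimality (KKT) conditions for \eqref{eq:lagrange-dual-min} and then show that the listed conditions \eqref{eq:optimality} are precisely a restatement of those conditions, with the vector $\bar\y$ playing the role of the multiplier for the linear equality constraint. Because \eqref{eq:lagrange-dual-min} is convex, any point satisfying the (sufficient) KKT conditions is a global optimizer, which gives the sufficiency claim. The necessity under $\sigma>0$ will follow because strict feasibility (Lemma~\ref{lem:strict}) supplies a Slater point, guaranteeing that the KKT conditions are also necessary.

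First I would set up the Lagrangian of \eqref{eq:lagrange-dual-min} with respect to its constraints. The objective is $\frac12\|\Pi_{\operatorname{R}(r)}(M(\A)-U)\|^2$, the equality constraint is $\mathcal M_k^*(Z)+\mathcal P_k^*(U)-\mathcal L_k^*(W)=\sigma\mathcal M_k^*(E_0)$, and the conic constraint is $Z\succeq 0$. I would introduce a multiplier $\bar\y\in\mathbb R^{\zeta(n+1,2k)}$ for the equality constraint (so that $\mathcal M_k(\bar\y)$, $\mathcal P_k(\bar\y)$, $\mathcal L_k(\bar\y)$ appear upon taking adjoints) and a positive semidefinite multiplier for the constraint $Z\succeq 0$. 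Stationarity in $W$ forces $\mathcal L_k(\bar\y)=0$; stationarity in $Z$, combined with complementary slackness for the conic constraint, yields $\mathcal M_k(\bar\y)\succeq 0$ together with $\langle\bar Z,\mathcal M_k(\bar\y)\rangle=0$. The one genuinely nonsmooth ingredient is stationarity in $U$: here I would invoke the subdifferential of the map $U\mapsto\frac12\|\Pi_{\operatorname{R}(r)}(M(\A)-U)\|^2$, whose subgradient set is exactly $-\operatorname{conv}\big(\Pi_{\operatorname{R}(r)}(M(\A)-U)\big)$. This is precisely the content I would borrow from the nonsmooth analysis in Appendix~\ref{app:low-rank}. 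Setting the subgradient condition to zero after passing through the adjoint $\mathcal P_k^*$ produces the final relation $\mathcal P_k(\bar\y)\in\operatorname{conv}\big(\Pi_{\operatorname{R}(r)}(M(\A)-\bar U)\big)$.

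For sufficiency, I would verify that a feasible $(\bar U,\bar W,\bar Z)$ together with such a $\bar\y$ satisfies the convex subdifferential inclusion $0\in\partial\phi$ at the point, where $\phi$ is the (negated) convex objective augmented with the indicator of the feasible region. Since the problem is convex, this inclusion certifies global optimality directly, with no need to rule out local minima. For necessity under $\sigma>0$, the key point is that Slater's condition holds: Lemma~\ref{lem:strict} furnishes a strictly feasible triplet with $Z\succ 0$, so the conic constraint qualification is met and the KKT conditions become necessary as well as sufficient; one then reads off an appropriate multiplier $\bar\y$ from the optimal dual variables.

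The main obstacle I anticipate is the careful handling of the nonsmooth term. The low rank projection operator $\Pi_{\operatorname{R}(r)}$ is set-valued and the objective $\frac12\|\Pi_{\operatorname{R}(r)}(\cdot)\|^2$ is not differentiable where the $r$-th and $(r{+}1)$-st singular values of $M(\A)-U$ coincide; establishing that its subdifferential equals $-\operatorname{conv}\big(\Pi_{\operatorname{R}(r)}(M(\A)-U)\big)$ is exactly what makes the convex-hull (rather than a single projection) appear in \eqref{eq:optimality}. I would lean on the Appendix~\ref{app:low-rank} machinery for this identity rather than rederive it, and then the remaining work is the routine but bookkeeping-heavy task of matching adjoint operators $\mathcal M_k^*,\mathcal P_k^*,\mathcal L_k^*$ with their pre-adjoints acting on the multiplier $\bar\y$, taking care that the size conventions for $U$, $W$, and $Z$ are respected.
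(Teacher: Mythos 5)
Your proposal is correct and follows essentially the same route as the paper: the paper's proof likewise reads \eqref{eq:optimality} as the KKT system for the convex problem \eqref{eq:lagrange-dual-min}, citing Lemma~\ref{lem:subdiff} for the subdifferential identity $\partial\Theta_r=\operatorname{conv}(\Pi_{\operatorname{R}(r)}(\cdot))$, Lemma~\ref{lem:strict} for the Slater condition that makes the conditions necessary when $\sigma>0$, and standard convex analysis (Rockafellar, Theorem~27.4). The only cosmetic difference is that the paper, rather than just invoking convex-KKT sufficiency abstractly, unwinds it into an explicit chain of subgradient inequalities using $\mathcal L_k(\bar\y)=0$, $\langle\bar Z,\mathcal M_k(\bar\y)\rangle=0$, and $\langle\mathcal M_k(\bar\y),Z\rangle\geq 0$ --- exactly the bookkeeping with the adjoints $\mathcal M_k^*,\mathcal P_k^*,\mathcal L_k^*$ that you describe.
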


\begin{proof}
This follows from Lemma~\ref{lem:strict}, Lemma~\ref{lem:subdiff} in Appendix B and standard convex analysis \cite[Theorem~27.4]{R-70}.

The sufficiency is important for our subsequent analysis and we give
a proof by the following direct calculation. For any feasible $(U,W,Z)$ of \eqref{eq:lagrange-dual-min}, we have
\begin{align*}
\frac{1}{2}\|\Pi_{\operatorname{R}(r)}(M(\A)-U)\|^2&\geq \frac{1}{2}\|\Pi_{\operatorname{R}(r)}(M(\A)-\bar U)\|^2+\langle -\mathcal P_k(\bar \y),U-\bar U\rangle\\
&=\frac{1}{2}\|\Pi_{\operatorname{R}(r)}(M(\A)-\bar U)\|^2-\langle \bar \y,\mathcal P_k^*(U)-\mathcal P_k^*(\bar U)\rangle\\
&=\frac{1}{2}\|\Pi_{\operatorname{R}(r)}(M(\A)-\bar U)\|^2-\langle \bar \y,\mathcal L_k^*(W)-\mathcal L_k^*(\bar W)\rangle\\
&\ \ \ \ \ \ \ \ \ \ +\langle \bar \y,\mathcal M_k^*(Z)-\mathcal M_k^*(\bar Z)\rangle\\
&=\frac{1}{2}\|\Pi_{\operatorname{R}(r)}(M(\A)-\bar U)\|^2+\langle \mathcal M_k(\bar \y),Z-\bar Z\rangle\\
&=\frac{1}{2}\|\Pi_{\operatorname{R}(r)}(M(\A)-\bar U)\|^2+\langle \mathcal M_k(\bar \y),Z\rangle\\
&\geq \frac{1}{2}\|\Pi_{\operatorname{R}(r)}(M(\A)-\bar U)\|^2,
\end{align*}
where the first inequality follows from Lemma~\ref{lem:subdiff} and \eqref{eq:optimality}, the second equality from the feasibility, and the rest all follow from \eqref{eq:optimality}.
\end{proof}

We are in the position to present one of our main results.
\begin{theorem}[Dual Certification]\label{thm:optimality}
If there exists a triplet $(\bar U,\bar W, \bar Z)$ such that the feasibility of problem \eqref{eq:lagrange-dual-min} is satisfied and a vector $\bar \y$ such that the optimality condition \eqref{eq:optimality} is satisfied, and
\begin{equation}\label{eq:rank-optimality}
\operatorname{rank}(\mathcal P_k(\bar \y))\leq r,
\end{equation}
then $\bar \y$ gives an optimal solution to problem \eqref{eq:non-opt-rank-reform}.
\end{theorem}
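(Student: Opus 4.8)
The plan is to reduce everything to the Lagrangian duality of Proposition~\ref{prop:lag-duality}: it suffices to produce a point that is feasible for \eqref{eq:non-opt-rank-reform} and whose primal objective equals the dual value $\phi(\bar U)$, since then both the primal point and the triplet $(\bar U,\bar W,\bar Z)$ are optimal, and in particular $\bar\y$ solves \eqref{eq:non-opt-rank-reform}. The obvious candidate is $(\bar B,\bar X,\bar\y)$ with $\bar B:=\mathcal P_k(\bar\y)$ and $\bar X:=\mathcal M_k(\bar\y)$.

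First I would verify primal feasibility. The two equality constraints $\bar B-\mathcal P_k(\bar\y)=0$ and $\bar X-\mathcal M_k(\bar\y)=0$ hold by construction; the constraints $\mathcal L_k(\bar\y)=0$ and $\bar X=\mathcal M_k(\bar\y)\succeq 0$ are part of the optimality condition \eqref{eq:optimality}; and the one remaining constraint $\operatorname{rank}(\bar B)\le r$ is precisely the extra hypothesis \eqref{eq:rank-optimality}. Thus $(\bar B,\bar X,\bar\y)$ is primal feasible, and $(\bar U,\bar W,\bar Z)$ is dual feasible by assumption, so weak duality $\psi(\bar B,\bar X)\ge\phi(\bar U)$ already holds.

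The heart of the argument is to close the gap. Writing $A:=M(\A)-\bar U$, I would first upgrade the convex-hull membership $\mathcal P_k(\bar\y)\in\operatorname{conv}(\Pi_{\operatorname{R}(r)}(A))$ in \eqref{eq:optimality} to genuine membership $\bar B\in\Pi_{\operatorname{R}(r)}(A)$. Every best rank-$r$ approximation $X$ of $A$ is orthogonal to its residual, so $\langle A,X\rangle=\|X\|^2=\|\Pi_{\operatorname{R}(r)}(A)\|^2$; taking a convex combination gives $\langle A,\bar B\rangle=\|\Pi_{\operatorname{R}(r)}(A)\|^2$, while convexity of the norm gives $\|\bar B\|\le\|\Pi_{\operatorname{R}(r)}(A)\|$. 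Because $\operatorname{rank}(\bar B)\le r$ makes $\bar B$ feasible for the rank-$r$ projection problem, these two facts force $\tfrac12\|A-\bar B\|^2\le\tfrac12(\|A\|^2-\|\Pi_{\operatorname{R}(r)}(A)\|^2)$, which is exactly the optimal projection value; hence $\bar B$ is a best rank-$r$ approximation and $\|\bar B\|=\|\Pi_{\operatorname{R}(r)}(A)\|$. Expanding $\tfrac12\|M(\A)-\bar B\|^2$ and substituting $\|\bar B\|^2=\|\Pi_{\operatorname{R}(r)}(A)\|^2$ together with $\langle M(\A),\bar B\rangle=\langle\bar U,\bar B\rangle+\|\Pi_{\operatorname{R}(r)}(A)\|^2$, the difference collapses to $\psi(\bar B,\bar X)-\phi(\bar U)=\sigma\langle E_0,\bar X\rangle-\langle\bar U,\bar B\rangle$. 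To see this vanishes, I would pair the dual equality $\mathcal M_k^*(\bar Z)+\mathcal P_k^*(\bar U)-\mathcal L_k^*(\bar W)=\sigma\mathcal M_k^*(E_0)$ with $\bar\y$ and pass to adjoints, obtaining $\langle\bar Z,\mathcal M_k(\bar\y)\rangle+\langle\bar U,\mathcal P_k(\bar\y)\rangle-\langle\bar W,\mathcal L_k(\bar\y)\rangle=\sigma\langle E_0,\mathcal M_k(\bar\y)\rangle$; the complementarity $\langle\bar Z,\mathcal M_k(\bar\y)\rangle=0$ and the feasibility $\mathcal L_k(\bar\y)=0$ from \eqref{eq:optimality} reduce this to $\langle\bar U,\bar B\rangle=\sigma\langle E_0,\bar X\rangle$, so the gap is zero and Proposition~\ref{prop:lag-duality} finishes the proof.

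The main obstacle is the first step of the gap computation, namely turning the convex-hull membership supplied by \eqref{eq:optimality} into the stronger assertion that $\mathcal P_k(\bar\y)$ is itself a best rank-$r$ approximation of $M(\A)-\bar U$. This is exactly where the rank hypothesis \eqref{eq:rank-optimality} is indispensable: \eqref{eq:optimality} alone only certifies dual optimality (Proposition~\ref{prop:solution}), and without the rank bound the candidate $\bar B$ need neither be primal feasible nor attain the norm $\|\Pi_{\operatorname{R}(r)}(M(\A)-\bar U)\|$, so the duality gap would not close. Once $\bar B$ is identified as a true projection, the remainder is the routine adjoint-and-complementarity bookkeeping indicated above.
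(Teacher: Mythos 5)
Your proof is correct, and its skeleton coincides with the paper's: form the candidate $(\bar B,\bar X,\bar\y)$ with $\bar B=\mathcal P_k(\bar \y)$, $\bar X=\mathcal M_k(\bar\y)$, check primal feasibility from \eqref{eq:optimality} and \eqref{eq:rank-optimality}, show the duality gap is zero via the adjoint identity $\langle\bar U,\bar B\rangle=\sigma\langle E_0,\bar X\rangle$ (your pairing of the dual equality with $\bar\y$, using complementarity and $\mathcal L_k(\bar\y)=0$, is exactly the paper's chain of equalities), and conclude by Proposition~\ref{prop:lag-duality}. The one place where you genuinely diverge is the crucial upgrade from $\mathcal P_k(\bar\y)\in\operatorname{conv}\big(\Pi_{\operatorname{R}(r)}(M(\A)-\bar U)\big)$ to $\bar B\in\Pi_{\operatorname{R}(r)}(M(\A)-\bar U)$: the paper simply invokes Lemma~\ref{lem:low-rank}, whose appendix proof is a nontrivial SVD analysis (Carath\'eodory decomposition, equal kernels of the matrices $U^s_{(\beta^*)_s}(U^s_{(\beta^*)_s})^{\mathsf T}$, collapse of the convex combination to a single point), whereas you prove the needed implication from scratch by a short variational argument --- every element of $\Pi_{\operatorname{R}(r)}(A)$ is orthogonal to its residual, so $\langle A,Z\rangle=\|\Pi_{\operatorname{R}(r)}(A)\|^2$ for each such $Z$, hence by linearity $\langle A,\bar B\rangle=\|\Pi_{\operatorname{R}(r)}(A)\|^2$ and by norm convexity $\|\bar B\|\le\|\Pi_{\operatorname{R}(r)}(A)\|$, which together with the rank bound forces $\bar B$ to attain the optimal projection value. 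Your route is more elementary and self-contained, and it cleanly isolates why \eqref{eq:rank-optimality} is indispensable (it supplies feasibility of $\bar B$ for the projection problem, closing the inequality); what it does not recover is the stronger content of Lemma~\ref{lem:low-rank} --- the \emph{only if} direction and the fact that the convex combination degenerates to a single extreme point, i.e.\ the characterization of extreme points of $\operatorname{conv}\big(\Pi_{\operatorname{R}(r)}(X)\big)$ --- which the paper keeps as a standalone tool (and reuses in the symmetric variant, Proposition~\ref{lem:low-rank-sym}). For the theorem at hand, your weaker one-directional statement suffices, so the proof stands.
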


\begin{proof}
Let
\[
\bar B:=\mathcal P_k(\bar \y)\ \text{and }\bar X:=\mathcal M_k(\bar\y).
\]
Then, $(\bar B,\bar X,\bar y)$ is a feasible solution for \eqref{eq:non-opt-rank-reform} by \eqref{eq:optimality} and \eqref{eq:rank-optimality}.
Moreover, $\bar{B}\in\Pi_{\operatorname{R}(r)}(M(\A)-\bar U)$ by Lemma~\ref{lem:low-rank} and \eqref{eq:rank-optimality}. Thus,
\begin{align*}
\phi(\bar U)=&\frac{1}{2}\|M(\A)\|^2 -\frac{1}{2}\|\Pi_{\operatorname{R}(r)}(M(\A)-\bar U)\|^2 \\
=&\frac{1}{2}\|M(\A)\|^2 +\frac{1}{2}\|M(\A)-\bar U-\Pi_{\operatorname{R}(r)}(M(\A)-\bar U)\|^2 -\frac{1}{2}\|M(\A)-\bar U\|^2\\
=&\frac{1}{2}\|M(\A)\|^2 +\frac{1}{2}\|M(\A)-\bar U-\bar B\|^2 -\frac{1}{2}\|M(\A)-\bar U\|^2\\
=&\frac{1}{2}\|M(\A)-\bar B\|^2+\langle\bar U,\bar B\rangle.
\end{align*}
Therefore, we have
\begin{align*}
\phi(\bar U)&=\frac{1}{2}\|M(\A)-\bar B\|^2+\langle\bar U,\bar B\rangle\\
&=\frac{1}{2}\|M(\A)-\bar B\|^2+\langle\bar U,\mathcal P_k(\bar \y)\rangle\\
&=\frac{1}{2}\|M(\A)-\bar B\|^2+\langle\mathcal P_k^*(\bar U), \bar \y\rangle\\
&=\frac{1}{2}\|M(\A)-\bar B\|^2+\langle\mathcal L^*_k(\bar W)-\mathcal M_k^*(\bar Z)+\sigma\mathcal M_k^*(E_0), \bar \y\rangle\\
&=\frac{1}{2}\|M(\A)-\bar B\|^2+\langle\bar W, \mathcal L_k(\bar \y)\rangle-\langle \bar Z,\mathcal M_k(\bar\y)\rangle+\sigma\langle E_0,\mathcal M_k(\bar \y)\rangle\\
&=\frac{1}{2}\|M(\A)-\bar B\|^2+\sigma\langle E_0,\bar X\rangle
\end{align*}
where the fourth equality follows from the feasibility of \eqref{eq:lagrange-dual-min} and the last equality follows from \eqref{eq:optimality}. Thus, by \eqref{eq:non-opt-rank-reform},
\[
\psi(\bar B,\bar X)=\phi(\bar U).
\]
By Lagrangian duality (cf.\ Proposition~\ref{prop:lag-duality}), we get the conclusion.
\end{proof}

\section{Quality of Approximation}\label{sec:app-quality}
In Theorem~\ref{thm:optimality}, we established a certification for global optimality of problem \eqref{eq:non-opt-rank-reform}. In this section, we will give the relationships between
the global optimal solutions of \eqref{eq:non-opt-rank-reform} and the original best low rank approximation problem \eqref{eq:best-r}. The discussions are divided into two subsections
based on the control parameter $\sigma$: the case for $\sigma=0$ and the case for $\sigma>0$.

\subsection{Best Low Rank Approximation}\label{sec:app-quality-low-rank}
In this subsection, we consider the case $\sigma=0$ in problem \eqref{eq:non-opt-rank-reform}, which is related to the best low rank approximation.

Let $\mathbb O(n)\subset\mathbb R^{n\times n}$ be the group of orthogonal matrices and $Q\in\mathbb O(n)$.
For a given $\mathcal A\in\operatorname{S}^3(\mathbb R^n)$ with $\mathcal{A}=(a_{ijk})$,  the \textit{matrix-tensor multiplication} $(Q,Q,Q)\cdot\mathcal A$  is defined as a tensor in $\operatorname{S}^3(\mathbb R^n)$ with its components being
\begin{equation}\label{eq:matrix-tensor-mt}
\big[(Q,Q,Q)\cdot\mathcal A\big]_{rst}=\sum_{i,j,k=1}^nq_{ri}q_{sj}q_{tk}a_{ijk}\ \text{for all }r,s,t\in\{1,\dots,n\}.
\end{equation}
This multiplication can be defined for general cases in an obvious way.

\begin{lemma}\label{lem:rank-r}
If $\{\x_1,\dots,\x_r\}\subset\Rn$ has rank exactly $r$, then
\[
\rk\big(\sum_{i=1}^{r+1}\lambda_i\x_i(\x_i^{\otimes 2})^\mathsf{T}\big)\leq r
\]
for any $\x_{r+1}\in\operatorname{span}\{\x_1,\dots,\x_r\}$
and $\lambda_i\geq 0$.
\end{lemma}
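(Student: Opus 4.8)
The plan is to bound the rank by controlling the column space of the matrix. The first observation is structural: for each index $i$ the summand $\lambda_i\x_i(\x_i^{\otimes 2})^\mathsf{T}$ is the $n\times n^2$ outer product of the column vector $\x_i\in\Rn$ with the row vector $(\x_i^{\otimes 2})^\mathsf{T}\in\mathbb R^{1\times n^2}$. Hence every column of this block is a scalar multiple of $\x_i$, so its column space is contained in the line $\operatorname{span}\{\x_i\}$. This is the key point, and it converts the rank question into a statement about spans of the vectors $\x_i$.

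Next I would pass to the column space of the full sum $\sum_{i=1}^{r+1}\lambda_i\x_i(\x_i^{\otimes 2})^\mathsf{T}$. Since the column space of a sum of matrices is contained in the sum of their column spaces, the column space of the whole matrix lies in $\operatorname{span}\{\x_1,\dots,\x_{r+1}\}$. Now the hypotheses enter: because $\x_{r+1}\in\operatorname{span}\{\x_1,\dots,\x_r\}$, adjoining $\x_{r+1}$ does not enlarge the span, so $\operatorname{span}\{\x_1,\dots,\x_{r+1}\}=\operatorname{span}\{\x_1,\dots,\x_r\}$. The assumption that $\{\x_1,\dots,\x_r\}$ has rank exactly $r$ says this subspace has dimension $r$. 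Since the rank of a matrix equals the dimension of its column space, the bound $\rk\big(\sum_{i=1}^{r+1}\lambda_i\x_i(\x_i^{\otimes 2})^\mathsf{T}\big)\le r$ follows immediately.

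There is no serious obstacle in this argument; it is essentially a dimension count. The only step warranting a moment of care is the first observation—that the column space of each rank-one block $\x_i(\x_i^{\otimes 2})^\mathsf{T}$ is precisely the line through $\x_i$—combined with the standard identity that the rank equals the column-space dimension. I note also that the sign condition $\lambda_i\ge 0$ is not actually needed for this rank bound: the same conclusion holds for arbitrary scalars, since only the spans of the $\x_i$ enter the estimate.
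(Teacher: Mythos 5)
Your proof is correct: each block $\lambda_i\x_i(\x_i^{\otimes 2})^\mathsf{T}$ has column space inside $\operatorname{span}\{\x_i\}$, so the column space of the sum lies in $\operatorname{span}\{\x_1,\dots,\x_{r+1}\}=\operatorname{span}\{\x_1,\dots,\x_r\}$, which has dimension $r$. The paper states this lemma without proof, and your dimension count is exactly the intended argument; your observations that $\lambda_i\geq 0$ is not needed (and, one could add, that ``rank exactly $r$'' could be weakened to ``rank at most $r$'') are also accurate.
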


\begin{lemma}\label{lem:app-rank}
Let $\{\x_1,\dots,\x_s\}\subset\mathbb R^n$ be a set of nonzero vectors of rank $s-1$. Then, the matrix $B:=\sum_{i=1}^s\x_i(\x_i^{\otimes 2})^\mathsf{T}$ has rank at least $s-2$, and $B$ has rank $s-2$ if and only if $\x_i=-\x_j$ for some $i\neq j$.
\end{lemma}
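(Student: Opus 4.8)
The plan is to exploit the outer‑product factorization $B=XY^{\mathsf T}$, where $X:=[\x_1,\dots,\x_s]\in\mathbb R^{n\times s}$ and $Y:=[\x_1^{\otimes 2},\dots,\x_s^{\otimes 2}]\in\mathbb R^{n^2\times s}$, identifying each $\x_i^{\otimes 2}$ with the symmetric matrix $\x_i\x_i^{\mathsf T}$. The rank hypothesis becomes $\rk(X)=s-1$, so $\ker X=\operatorname{span}(\cb)$ for the dependency $\sum_i c_i\x_i=\0$, unique up to scaling. The upper bound $\rk(B)\le s-1$ is immediate since every column of $B$ lies in $\operatorname{span}\{\x_1,\dots,\x_s\}$. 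For the finer information I would read $B$ as the composition $\mathbb R^{n^2}\xrightarrow{Y^{\mathsf T}}\mathbb R^{s}\xrightarrow{X}\mathbb R^{n}$ and use $\rk(B)=\rk(Y)-\dim\!\big(\operatorname{Im}(Y^{\mathsf T})\cap\ker X\big)$, noting that the intersection has dimension at most $1$ because $\ker X$ is a line. Thus everything reduces to determining $\rk(Y)$ together with that single possible intersection.

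The heart of the argument is the analysis of $\ker Y=\{\bb:\sum_i b_i\x_i\x_i^{\mathsf T}=\0\}$, for which I would show $\dim\ker Y\le 1$, hence $\rk(Y)\ge s-1$ and the lower bound $\rk(B)\ge s-2$. The key step is that applying $\sum_i b_i\x_i\x_i^{\mathsf T}=\0$ to an arbitrary $\y$ gives $\sum_i b_i\langle\x_i,\y\rangle\x_i=\0$, so the coefficient vector $(b_i\langle\x_i,\y\rangle)_i$ lies in $\ker X=\operatorname{span}(\cb)$; since this holds for all $\y$ and depends linearly on $\y$, there is a fixed vector $\mathbf m$ with $b_i\x_i=c_i\mathbf m$ for every $i$. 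For each $i$ with $c_i\neq 0$ this forces $\x_i\parallel\mathbf m$; as a nontrivial relation $\cb$ has at least two nonzero entries, while three mutually parallel vectors would push the rank below $s-1$, exactly two indices occur, so $\cb$ is a two–term relation between a parallel pair and $\bb$ is thereby determined up to scale. This simultaneously proves $\dim\ker Y\le 1$ and pins down that $\rk(Y)=s-1$ happens precisely when two of the $\x_i$ are parallel.

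For the equality characterization, the easy direction is a one–line cancellation: if $\x_i=-\x_j$ then $\x_i^{\otimes 2}=\x_j^{\otimes 2}$, so the $i$‑th and $j$‑th outer products cancel, $B$ collapses to a sum of $s-2$ terms, and $\rk(B)\le s-2$; combined with the lower bound this gives equality. Conversely, $\rk(B)=s-2$ excludes $\rk(Y)=s$ (the intersection would need dimension $2$), so $\rk(Y)=s-1$, meaning two vectors are parallel, say $\x_2=t\x_1$, and moreover $\cb\in\operatorname{Im}(Y^{\mathsf T})=(\ker Y)^{\perp}$. Writing the explicit generator $\bb^{*}$ of $\ker Y$ (with $\mathbf m=\x_1$ one gets $b^{*}_1=c_1$, $b^{*}_2=c_2/t$, and $b^{*}_i=0$ otherwise) and imposing $\langle\cb,\bb^{*}\rangle=0$ together with $c_1+tc_2=0$ coming from $c_1\x_1+c_2\x_2=\0$ should collapse to $c_1^3=c_2^3$, hence $c_1=c_2$ and $t=-1$, i.e.\ $\x_2=-\x_1$.

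The main obstacle I anticipate is the second step: establishing that the squares $\x_i^{\otimes 2}$ admit at most one linear relation and that this relation is a parallel pair. This is exactly where the rank‑$(s-1)$ hypothesis is used essentially and non‑obviously, since pairwise non‑proportionality of the $\x_i$ alone would not prevent many relations among the squares once $s$ grows; the tight bound on the number of mutually parallel vectors is what forces $\dim\ker Y\le 1$. Once the structure of $\ker Y$ is understood, the remaining sign condition $\x_i=-\x_j$ falls out of the short orthogonality bookkeeping $\cb\perp\ker Y$.
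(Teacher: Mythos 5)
Your proposal is correct, but it takes a genuinely different route from the paper's. The paper argues in coordinates: using the rank hypothesis it maps $\x_i\mapsto\e_i$ for $i\le s-1$ by an invertible $P$ (via $PB(P\otimes P)^\mathsf{T}=\sum_i(P\x_i)[(P\x_i)^{\otimes 2}]^\mathsf{T}$), permutes columns so that the leading $(s-1)\times(s-1)$ block of $BQ$ becomes the rank-one update $I_{s-1}+\tilde\x_s\,w^\mathsf{T}$ with $w=((\x_s)_1^2,\dots,(\x_s)_{s-1}^2)$, reads off $\rk(B)\ge s-2$ immediately, and extracts $\x_s=-\e_i$ from the degeneracy of that update, the decisive point being that the entries of $w$ are squares, hence nonnegative, so the condition \eqref{eq:vector-0} pins down a single coordinate equal to $-1$. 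You instead work coordinate-free with the factorization $B=XY^\mathsf{T}$, the composition formula $\rk(B)=\rk(Y)-\dim\big(\operatorname{Im}(Y^\mathsf{T})\cap\ker X\big)$, and a structural classification of $\ker Y$: the identity $b_i\x_i=c_i\mathbf m$ forces any relation among the squares $\x_i\x_i^\mathsf{T}$ to be supported on a single parallel pair, whence $\dim\ker Y\le 1$ and the lower bound; the equality case then reduces to the orthogonality $\cb\perp\ker Y$, whose bookkeeping yields $c_1^3=c_2^3$, i.e.\ $t^3=-1$ and $t=-1$ over $\mathbb R$. Your route buys three things: it isolates exactly where the rank-$(s-1)$ hypothesis acts (capping the relations among the squares); it makes the real cube root of $-1$ the visible mechanism, so the argument generalizes verbatim to $\x_i(\x_i^{\otimes 2d})^\mathsf{T}$; and it is airtight at the one spot where the paper is loose --- singularity of the block $I_{s-1}+\tilde\x_s w^\mathsf{T}$ alone is equivalent to $1+w^\mathsf{T}\tilde\x_s=0$ (i.e.\ $\sum_j(\x_s)_j^3=-1$), not to \eqref{eq:vector-0}, and forcing $w=-\tilde\x_s$ requires also the off-diagonal columns $(\x_s)_j(\x_s)_k\x_s$ of $BQ$, a step the paper leaves implicit and which your kernel analysis replaces wholesale. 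The paper's computation is shorter once coordinates are fixed. Two micro-gaps in your write-up are routine to fill: rule out $\mathbf m=\0$ explicitly (it forces $\bb=\0$ since all $\x_i\neq\0$), and note that $t(\y)$ is a well-defined linear functional precisely because $\cb\neq\0$ spans the one-dimensional $\ker X$.
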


\begin{proof}
Without loss of generality, we assume that $\x_1,\dots,\x_{s-1}$ are linearly independent.
There exists a nonsingular matrix $P\in\mathbb R^{n\times n}$ such that
\[
P\x_i = \e_i\ \text{for all }i=1,\dots,s-1.
\]
We have
\[
PB(P\otimes P)^\mathsf{T} =\sum_{i=1}^s(P\x_i)[(P\x_i)^{\otimes 2}]^\mathsf{T}.
\]
Thus, without loss of generality, we assume that
\[
\x_i=\e_i\ \text{for all }i=1,\dots,s-1\ \text{and }\x_s=\sum_{i=1}^{s-1}\alpha_i\e_i.
\]
Let $Q\in\mathbb R^{n^2\times n^2}$ be the permutation matrix such that
\[
\sum_{i=1}^{s-1}\e_i(\e_i^{\otimes 2})^\mathsf{T}Q = \begin{bmatrix}I_{s-1}&0\\ 0& 0\end{bmatrix}.
\]
Obviously, $\operatorname{rank}(B)=\rk(BQ)\geq s-2$. If $\operatorname{rank}(B)=s-2$, we must have that the $(s-1)\times (s-1)$ leading principal submatrix of $BQ$, which is
\[
I_{s-1}+\tilde\x_s[(\x_s^{\otimes 2})^\mathsf{T}Q]_{1:s-1},
\]
has rank $s-2$, where $\tilde\x_s=(\x_s)_{1:s-1}$. This can happen if and only if
\begin{equation}\label{eq:vector-0}
[(\x_s^{\otimes 2})^\mathsf{T}Q]_{1:s-1}=-\tilde{\x}_s^\mathsf{T}\ \text{and }\|\tilde{\x}_s\|=1.
\end{equation}
While,
\begin{equation}\label{eq:vector-1}
[(\x_s^{\otimes 2})^\mathsf{T}Q]_{1:s-1}=((\x_s)_1^2,\dots,(\x_s)_{s-1}^2).
\end{equation}
Thus, \eqref{eq:vector-0} and \eqref{eq:vector-1} imply that
\[
\x_s=-\e_i\ \text{for some }i\in\{1,\dots,s-1\}.
\]

The sufficiency is clear.
Consequently, the conclusion follows.
\end{proof}

Proposition~\ref{prop:exact} indicates that \eqref{eq:non-opt-rank-relax} is an exact relaxation of the best low rank approximation problem \eqref{eq:non-opt-rank}, the following result characterizes the approximation quality of the further relaxation \eqref{eq:non-opt-rank-reform} to \eqref{eq:non-opt-rank}.
\begin{theorem}\label{thm:sub-optimal}
Let $\y$ be an optimizer of problem \eqref{eq:non-opt-rank-reform} with $k=2$ and $\sigma=0$ satisfying $\operatorname{rank}(\mathcal M_{1}(\y))=\operatorname{rank}(\mathcal M_2(\y))$. Then
\begin{equation}\label{eq:rank-moment}
\operatorname{rank}(\y)=\operatorname{rank}(\mathcal M_2(\y))\leq r+2.
\end{equation}
Moreover,
\begin{enumerate}
\item if $\operatorname{rank}(\mathcal M_2(\y))\leq r$ or $\operatorname{rank}(\mathcal M_2(\y))=r+2$ or $r=1$, then $\Pp_2(\y)$ is an optimal solution for \eqref{eq:best-r}.

\item if $\operatorname{rank}(\mathcal M_2(\y))=r+1$, then a feasible solution $\B$ of \eqref{eq:best-r} can be constructed from $\y$ such that
\begin{equation*}
\|\A-\B\|^2\leq \|\A-\B^*\|^2 + \rho(\A-\B)^2
\end{equation*}
where $\B^*$ is an optimal solution of \eqref{eq:best-r}.
\end{enumerate}
\end{theorem}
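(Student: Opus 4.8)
The plan is to use the flatness hypothesis to reduce everything to the geometry of the support of a finitely supported measure, and then read off the three regimes from how the matrix rank of $B=\mathcal P_2(\y)$ controls the tensor rank of the associated $\mathcal B$. First I would invoke Proposition~\ref{prop:exact} (equivalently the Curto--Fialkow flat extension theorem \cite{Curto-Fialkow:tkm}): since $\operatorname{rank}(\mathcal M_1(\y))=\operatorname{rank}(\mathcal M_2(\y))=:s$, the optimizer $\y$ is the moment sequence of a unique $s$-atomic measure $\mu=\sum_{i=1}^s\lambda_i\delta_{\x_i}$ with $\lambda_i>0$ and $\x_i\in\mathbb S^{n-1}$, so that $\mathcal B=\sum_{i=1}^s\lambda_i\x_i^{\otimes 3}$ and $B=M(\mathcal B)=\mathcal P_2(\y)=\sum_{i=1}^s\lambda_i\x_i(\x_i^{\otimes 2})^\mathsf{T}$; this also gives $\operatorname{rank}(\y)=s$. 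The decisive observation is that $\operatorname{rank}(\mathcal M_1(\y))=s$ forces the homogenized vectors $(1,\x_i)\in\mathbb R^{n+1}$, $i=1,\dots,s$, to be linearly independent, and since deleting the first coordinate drops the rank by at most one, the set $\{\x_1,\dots,\x_s\}$ has rank $t\ge s-1$. Combined with the elementary bound $\operatorname{rank}(B)\le t$ (the column space of $B$ lies in $\operatorname{span}\{\x_i\}$), this pins $t\in\{s-1,s\}$.

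Next I would establish \eqref{eq:rank-moment}. Absorbing the weights via $\hat\x_i:=\lambda_i^{1/3}\x_i$ gives $B=\sum_i\hat\x_i(\hat\x_i^{\otimes 2})^\mathsf{T}$ with $\operatorname{rank}\{\hat\x_i\}=t$. If $t=s$ then $\{\hat\x_i\}$ and $\{\hat\x_i^{\otimes 2}\}$ are both linearly independent, so $\operatorname{rank}(B)=s$; if $t=s-1$ then Lemma~\ref{lem:app-rank} gives $\operatorname{rank}(B)\ge s-2$. In either case the constraint $\operatorname{rank}(B)\le r$ forces $s\le r+2$. The same dichotomy drives the first conclusion. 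When $s\le r$ the tensor $\mathcal B$ is already a sum of at most $r$ unit rank-one terms, hence feasible for \eqref{eq:best-r}; being a feasible point that attains the value of a relaxation of \eqref{eq:best-r}, it is optimal. When $s=r+2$ we must have $t=s-1$ and $\operatorname{rank}(B)=s-2$, so Lemma~\ref{lem:app-rank} applied to the $\hat\x_i$, unwound through the norm identity $\lambda_i^{1/3}=\lambda_j^{1/3}$, yields a pair with $\x_i=-\x_j$ and $\lambda_i=\lambda_j$; since $\x_j^{\otimes 3}=-\x_i^{\otimes 3}$ those two summands cancel in $\mathcal B$, leaving at most $s-2=r$ terms, again feasible and hence optimal. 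The case $r=1$ is subsumed here: then $s\le 3$, and for $s=2$ the two atoms span a line, forcing $\x_2=-\x_1$ and collapsing $\mathcal B$ to a single rank-one tensor.

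Finally, for the second conclusion ($s=r+1$), I would construct $\mathcal B$ by deleting one atom: $\mathcal B:=\sum_{i\ne j}\lambda_i\x_i^{\otimes 3}$, a sum of $r$ unit rank-one terms and thus feasible for \eqref{eq:best-r}. Writing $\tilde{\mathcal B}:=\sum_{i=1}^{r+1}\lambda_i\x_i^{\otimes 3}$ for the optimizer tensor, relaxation feasibility gives $\|\A-\tilde{\mathcal B}\|^2\le\|\A-\mathcal B^*\|^2$. The key extra input is a stationarity identity: perturbing one weight $\lambda_j\mapsto\lambda_j+\varepsilon$ preserves $\mathcal L_2(\y)=0$, positive semidefiniteness, and (by the column-space bound $\operatorname{rank}(B)\le t=r$) the rank constraint, so $\varepsilon=0$ minimizes a smooth quadratic and $\langle\A-\tilde{\mathcal B},\x_j^{\otimes 3}\rangle=0$. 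Then $\|\A-\mathcal B\|^2=\|\A-\tilde{\mathcal B}\|^2+\lambda_j^2$, while $\rho(\A-\mathcal B)\ge\langle\A-\mathcal B,\x_j^{\otimes 3}\rangle=\lambda_j$, and combining the three relations yields $\|\A-\mathcal B\|^2\le\|\A-\mathcal B^*\|^2+\rho(\A-\mathcal B)^2$.

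I expect the main obstacle to be the rank bookkeeping: translating the flatness rank equality into $t\ge s-1$, and especially carrying the weights correctly through Lemma~\ref{lem:app-rank} so that ``$\operatorname{rank}(B)=s-2$'' becomes ``antipodal atoms with equal mass'', which is precisely the cancellation making the $s=r+2$ case feasible. Verifying that the weight perturbations in the $s=r+1$ case stay feasible under the rank constraint is the other delicate point; here Lemma~\ref{lem:rank-r} is convenient for confirming that such $(r+1)$-atom configurations indeed satisfy $\operatorname{rank}(B)\le r$.
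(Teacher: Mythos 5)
Your proposal is correct, and for the rank bound \eqref{eq:rank-moment} and part~1 it follows the paper's proof essentially verbatim: flatness plus Curto--Fialkow produces the $s$-atomic measure, $\operatorname{rank}(\mathcal M_1(\y))=s$ forces the atoms to span at least $s-1$ dimensions, and Lemma~\ref{lem:app-rank} (with the weights absorbed as $\lambda_i^{1/3}\x_i$, exactly as the paper does) combined with $\operatorname{rank}(B)\leq r$ pins $s\leq r+2$ and yields the antipodal equal-mass cancellation when $s=r+2$. Where you genuinely diverge is part~2 ($s=r+1$). The paper perturbs the \emph{location and weight} of the dependent atom: after an orthogonal change of coordinates sending $\operatorname{span}\{\x_1,\dots,\x_r\}$ to $\operatorname{span}\{\e_1,\dots,\e_r\}$, optimality forces $\lambda_{r+1}((\x_{r+1})_{1:r})^{\otimes 3}$ to be a best rank-one approximation of the residual sub-tensor, whence $\lambda_{r+1}=\rho\big((\A-\B)_{1:r}\big)\leq\rho(\A-\B)$ together with the Pythagorean identity \eqref{eq:optimal-app-chain}. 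You instead perturb only a \emph{single weight} $\lambda_j\mapsto\lambda_j+\varepsilon$, correctly noting that the column space of the perturbed matrix lies in $\operatorname{span}\{\x_1,\dots,\x_{r+1}\}$, an $r$-dimensional space, so feasibility is preserved (this column-space argument is the right one: Lemma~\ref{lem:rank-r}, which you cite as a backup, would need the remaining $r$ atoms to have rank exactly $r$, which can fail for an arbitrary deleted $j$); interior stationarity then gives $\langle\A-\tilde{\mathcal B},\x_j^{\otimes 3}\rangle=0$, hence $\|\A-\B\|^2=\|\A-\tilde{\mathcal B}\|^2+\lambda_j^2$, and you bound $\lambda_j\leq\rho(\A-\B)$ directly by testing $\A-\B$ at $\x_j^{\otimes 3}$ rather than through the sub-tensor. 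Your route is more elementary and arguably cleaner---it dispenses with the orthogonal reduction, the sub-tensor formalism, and the full best-rank-one maximality (only first-order stationarity is used)---and it is more flexible, since it applies to any deleted atom $j$, so one may choose the atom with smallest $\lambda_j$ and thereby sharpen the actual error term $\lambda_j^2$ beyond the stated $\rho(\A-\B)^2$. What the paper's heavier argument buys in exchange is structural information about the optimizer, namely the exact identification of $\lambda_{r+1}$ as the spectral radius of the restricted residual, which your stationarity-only argument does not recover.
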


\begin{proof}
By Proposition~\ref{prop:exact}, for the conclusion \eqref{eq:rank-moment}, it is sufficient to show that $\operatorname{rank}(\mathcal M_2(\y))\leq r+2$.

Let $\operatorname{rank}(\mathcal M_2(\y))=s$ for some integer $s\geq 0$. In the following, we consider the case $s>r$, since when $s\leq r$, the optimal $B=\Pp_2(\y)$ corresponds to a tensor of rank at most $r$. Thus, it gives an optimal solution for \eqref{eq:best-r}.

By the fact that $\y$ satisfies the second flatness condition, we have that there exist $\lambda_i>0$ and $\x_i\in\mathbb S^{n-1}$ for all $i=1,\dots,s$ such that
\[
\y=\int_{\mathbb S^{n-1}}\x^4 \operatorname{d}\mu (\x)
\]
with
$
\mu:=\sum_{i=1}^s\lambda_i\delta_{\x_i}.
$
It then follows from the definition that
\[
\mathcal M_1(\y)=\sum_{i=1}^s\lambda_i\bar \x_i\bar\x_i^\mathsf{T}
\]
with
\[
\bar\x_i:=\begin{bmatrix}1\\ \x_i\end{bmatrix}\ \text{for all }i=1,\dots,s.
\]
By the flatness condition, $\operatorname{rank}(\mathcal M_1(\y))=s>0$. Thus, the set of vectors $\{\x_1,\dots,\x_s\}$ has rank at least $s-1$.
On the other hand, by the feasibility of $\y$ for \eqref{eq:non-opt-rank-reform}, it follows that the matrix
\[
\bar B:=\sum_{i=1}^s\lambda_i\x_i(\x_i^{\otimes 2})^\mathsf{T}
\]
is of rank not greater than $r$. If the vectors $\x_1,\dots,\x_s$ are linearly independent, then the corresponding matrix $\bar B$ must have rank $s>r$, which is a contradiction. Thus, the set of vectors $\{\x_1,\dots,\x_s\}$ has rank $s-1$. Consequently, by Lemma~\ref{lem:app-rank}, we have that $r<s\leq r+2$. The conclusion \eqref{eq:rank-moment} then follows.

Moreover, by Lemma~\ref{lem:app-rank} again, we have that $s=r+2$ exactly when $\sqrt[3]{\lambda_i}\x_i=-\sqrt[3]{\lambda_j}\x_j$ for a pair $i\neq j$. But in this case, the term $\lambda_i\x_i^{\otimes 3}$ and $\lambda_j\x_j^{\otimes 3}$ combined into a zero sum. Consequently,
\[
\bar B=\sum_{k\in\{1,\dots,s\}\setminus\{i,j\}}\lambda_k\x_k(\x_k^{\otimes 2})^\mathsf{T},
\]
which corresponds to a tensor of rank $s-2=r$. Similar to the case $s=r$, an optimal solution for \eqref{eq:best-r} is found.

In the following, we consider the case $s=r+1$.

We assume, without loss of generality, that the vectors $\x_1,\dots,\x_r$ are linear independent and $\x_{r+1}\in\operatorname{span}\{\x_1,\dots,\x_r\}$. If $r=1$, then $\x_{r+1}=\x_2=\pm \x_1$ and $B=\mathcal P(\y)$ corresponds to a rank one tensor. Hence, it is an optimal solution.

By the equivalence of the matrix representation (cf.\ Section~\ref{sec:tensor-matrix}), we have that
\begin{equation}\label{eq:optimal-app}
\|\A-\sum_{i=1}^{r+1}\lambda_i\x_i^{\otimes 3}\|^2\leq \|\A-\B^*\|^2,
\end{equation}
where $\B^*$ is an optimal solution to problem \eqref{eq:best-r}, since \eqref{eq:non-opt-rank-reform} is a relaxation of \eqref{eq:best-r}.

It follows from the definition that (cf.\ \eqref{eq:matrix-tensor-mt})
\[
\|(P,P,P)\cdot\A\|=\|\A\|
\]
for an orthogonal matrix $P\in\mathbb{O}(n)$. Thus, we can assume without loss of generality that
\begin{equation}\label{eq:space}
\operatorname{span}\{\x_1,\dots,\x_r\}=\operatorname{span}\{\e_1,\dots,\e_r\},
\end{equation}
where $\e_i\in\mathbb R^n$ is the $i$-th column vector of the identity matrix of matching size for all $i\in\{1,\dots,r\}$.

On the other hand, it holds that
\[
B:=\sum_{i=1}^r\lambda_i\x_i(\x_i^{\otimes 2})^\mathsf{T}+\beta\x(\x^{\otimes 2})^\mathsf{T}
\]
has rank at most $r$ for any $\x\in\operatorname{span}\{\x_1,\dots,\x_r\}$. Therefore, it, together with the fact that $\x_{r+1}\in\operatorname{span}\{\x_1,\dots,\x_r\}$ and \eqref{eq:space}, implies that $\lambda_{r+1}({(\x_{r+1})}_{1:r})^{\otimes 3}$ is a best rank one approximation of the sub-tensor $(\A-\sum_{i=1}^{r}\lambda_i\x_i^{\otimes 3})_{{1:r}}$. Here $\mathbf u_{1:r}\in\mathbb R^r$ is the sub-vector of $\mathbf u\in\mathbb R^n$ formed by the first $r$ entries $u_1,\dots,u_r$, and $\mathcal U_{1:r}\in\SS^3(\mathbb R^r)$ is the sub-tensor of $\mathcal U\in\SS^3(\mathbb R^n)$ formed by the entries $u_{i_1i_2i_3}$ with $i_1,i_2,i_3\in\{1,\dots,r\}$.
Thus, the optimality implies that
\begin{equation}\label{eq:optimal-app-rho}
\lambda_{r+1}^2=\rho\big((\A-\sum_{i=1}^{r}\lambda_i\x_i^{\otimes 3})_{{1:r}}\big)^2\leq \rho(\A-\sum_{i=1}^{r}\lambda_i\x_i^{\otimes 3})^2
\end{equation}
and
\begin{equation}\label{eq:optimal-app-chain}
\|\A-\sum_{i=1}^{r+1}\lambda_i\x_i^{\otimes 3}\|^2=\|\A-\sum_{i=1}^{r}\lambda_i\x_i^{\otimes 3}\|^2-\lambda_{r+1}^2.
\end{equation}
Therefore, with $\B:=\sum_{i=1}^{r}\lambda_i\x_i^{\otimes 3}$, \eqref{eq:optimal-app}, \eqref{eq:optimal-app-rho} and \eqref{eq:optimal-app-chain}, we have
\[
\|\A-\B\|^2\leq \|\A-\B^*\|^2+\rho(\A-\B)^2.
\]
This completes the proof.
\end{proof}

In the sequel, we show the exact relaxation of \eqref{eq:non-opt-rank-reform} when the given tensor is orthogonally decomposable. To do this, recall that
a third order symmetric tensor $\mathcal A\in\operatorname{S}^3(\mathbb R^{n})$ is called \textit{orthogonally decomposable} (cf.\ \cite{ZG-01,K-01} \footnote{In \cite{K-01}, this notion was
referred as completely orthogonally decomposable tensors by Kolda.}) if there exist an orthonormal matrix
\[
A=[\mathbf a_{1},\dots,\mathbf a_{r}]\in\mathbb R^{n\times r}
\]
and positive numbers $\lambda_i\in\mathbb R$ for $i=1,\dots,r$ such that
\begin{equation}\label{eq:orthogonal}
\mathcal A=\sum_{i=1}^{r}\lambda_i\mathbf a_i^{\otimes 3}.
\end{equation}
The number $r$ is the rank of the tensor $\A$. It is well-known that the rank one decomposition \eqref{eq:orthogonal} of an orthogonally decomposable tensor $\A$ is unique \cite{ZG-01}.
\begin{theorem}[Exact Relaxation]\label{thm:exact-relaxtion-0}
If $\A$ is an orthogonally decomposable tensor with rank $s\leq n$, then for $\sigma=0$
and $r\leq s$, \eqref{eq:non-opt-rank-reform} is an exact relaxation of the best rank-$r$ approximation problem.
\end{theorem}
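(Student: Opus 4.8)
The plan is to exhibit an explicit primal optimizer of \eqref{eq:non-opt-rank-reform} together with a trivial dual certificate, and then invoke the Dual Certification Theorem~\ref{thm:optimality}. The starting point, and the only place where orthogonal decomposability is genuinely used, is the observation that the matricization $M(\A)$ inherits a singular value decomposition directly from \eqref{eq:orthogonal}. After relabeling so that $\lambda_1\geq\cdots\geq\lambda_s$, write
\[
M(\A)=\sum_{i=1}^{s}\lambda_i\,\mathbf a_i(\mathbf a_i^{\otimes 2})^{\mathsf T}.
\]
The left factors $\mathbf a_i\in\mathbb R^n$ are orthonormal by hypothesis, while the right factors satisfy $\langle\mathbf a_i^{\otimes 2},\mathbf a_j^{\otimes 2}\rangle=\langle\mathbf a_i,\mathbf a_j\rangle^2=\delta_{ij}$ and are therefore orthonormal as well, so the display is a genuine SVD of $M(\A)$ with singular values $\lambda_1,\dots,\lambda_s$. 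By the Eckart--Young--Mirsky theorem for the Frobenius norm, the truncated sum $\sum_{i=1}^{r}\lambda_i\,\mathbf a_i(\mathbf a_i^{\otimes 2})^{\mathsf T}$ is a best rank-$r$ approximation of $M(\A)$, i.e.\ it lies in $\Pi_{\operatorname{R}(r)}(M(\A))$. The crucial point is that this optimal rank-$r$ \emph{matrix} is exactly the matricization of the symmetric rank-$r$ \emph{tensor} $\B:=\sum_{i=1}^{r}\lambda_i\mathbf a_i^{\otimes 3}$; this coincidence is special to the orthogonally decomposable case and is precisely what fails for general tensors.

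With this in hand I would take $\bar\y$ to be the truncated extended moment sequence generated by the $r$-atomic measure $\mu:=\sum_{i=1}^r\lambda_i\delta_{\mathbf a_i}$, so that $\mathcal M_k(\bar\y)\succeq 0$, $\mathcal L_k(\bar\y)=0$ (the support lies on $\mathbb S^{n-1}$), and $\mathcal P_k(\bar\y)=\sum_{i=1}^r\lambda_i\mathbf a_i(\mathbf a_i^{\otimes 2})^{\mathsf T}$. For the dual I would use the trivial triplet $(\bar U,\bar W,\bar Z)=(0,0,0)$, which is feasible for \eqref{eq:lagrange-dual-min} when $\sigma=0$ since $\mathcal M_k^*(0)+\mathcal P_k^*(0)-\mathcal L_k^*(0)=0=\sigma\mathcal M_k^*(E_0)$ and $0\succeq 0$; this is consistent with the earlier remark that $\sigma=0$ forces $U=0$. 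The optimality conditions \eqref{eq:optimality} are then immediate: $\langle\bar Z,\mathcal M_k(\bar\y)\rangle=0$ holds because $\bar Z=0$, and $\mathcal P_k(\bar\y)\in\Pi_{\operatorname{R}(r)}(M(\A)-\bar U)=\Pi_{\operatorname{R}(r)}(M(\A))$ by the first paragraph. The rank condition \eqref{eq:rank-optimality} holds since $\mathcal P_k(\bar\y)$ is a sum of $r$ rank-one matrices. Theorem~\ref{thm:optimality} then certifies that $\bar\y$ is an optimal solution of \eqref{eq:non-opt-rank-reform}.

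It remains to transfer this back to \eqref{eq:best-r}. By Proposition~\ref{prop:inner} the optimal value of \eqref{eq:non-opt-rank-reform} equals $\tfrac12\|M(\A)-\mathcal P_k(\bar\y)\|^2=\tfrac12\|\A-\B\|^2$, and $\B$ is a bona fide rank-$r$ tensor, hence feasible for \eqref{eq:best-r}, which gives $\tfrac12\|\A-\B\|^2\geq\operatorname{opt}\eqref{eq:best-r}$. On the other hand \eqref{eq:non-opt-rank-reform} is a relaxation of \eqref{eq:best-r}, so $\operatorname{opt}\eqref{eq:non-opt-rank-reform}\leq\operatorname{opt}\eqref{eq:best-r}$. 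Combining the two inequalities forces equality, so the optimal values coincide and the recovered $\B$ is a best rank-$r$ approximation of $\A$, which is exactly the asserted exactness. The main obstacle, and the only substantive step, is establishing the SVD structure of $M(\A)$ and the resulting identification of an element of $\Pi_{\operatorname{R}(r)}(M(\A))$ with the matricization of a symmetric rank-$r$ tensor; once that is secured, everything reduces to a routine verification of the certification conditions with the zero dual multiplier.
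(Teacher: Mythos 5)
Your proof is correct and follows essentially the same route as the paper's: the same $r$-atomic moment sequence $\bar\y$ generated by $\sum_{i=1}^r\lambda_i\delta_{\mathbf a_i}$, the same trivial dual certificate $(\bar U,\bar W,\bar Z)=(0,0,0)$, and the same identification of $M(\A)=\sum_{i=1}^s\lambda_i\,\mathbf a_i(\mathbf a_i^{\otimes 2})^{\mathsf T}$ as a genuine SVD so that Eckart--Young--Mirsky places $\mathcal P_k(\bar\y)$ in $\Pi_{\operatorname{R}(r)}(M(\A))$, after which Theorem~\ref{thm:optimality} certifies optimality for \eqref{eq:non-opt-rank-reform}. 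The only (minor, and in fact slightly cleaner) divergence is the final transfer back to \eqref{eq:best-r}: the paper invokes Theorem~\ref{thm:sub-optimal} via the flatness condition, whereas your sandwich argument --- the relaxation's optimal value is attained at $\B$, which is itself feasible for \eqref{eq:best-r} --- closes the same gap more elementarily, without needing flatness at all.
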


\begin{proof}
Suppose that
\[
\A=\sum_{i=1}^s\lambda_i\x_i^{\otimes 3}
\]
be an orthogonal decomposition of $\A$ with $\lambda_1\geq\dots\geq\lambda_s>0$.
Let
\[
\mu:=\sum_{i=1}^r\lambda_i\delta_{\x_i}
\]
and $\bar\y$ the corresponding moment sequence generated by the $r$-atomic measure $\mu$. It is immediate to see that $\bar\y$ is a feasible solution of problem \eqref{eq:non-opt-rank-reform}.

A dual feasible solution for \eqref{eq:lagrange-dual-compact} is $(U,V,W)=(0,0,0)$. Obviously,
\[
\operatorname{rank}(\mathcal P_k(\bar\y))\leq r,
\]
since
\[
\mathcal P_k(\bar\y)=\sum_{i=1}^r\lambda_i\x_i(\x_i^{\otimes 2})^\mathsf{T}.
\]

If $\mathcal P_k(\bar\y)\in\operatorname{conv}(\Pi_{\operatorname{R}(r)}(M(\A)))$, then by Proposition~\ref{prop:solution} and Theorem~\ref{thm:optimality}, we can conclude that $\bar\y$ is an optimal solution of problem \eqref{eq:non-opt-rank-reform}. Consequently, it is an optimizer of the best rank-$r$ approximation problem by Theorem~\ref{thm:sub-optimal}, since the flatness is satisfied.

Note that
\begin{equation}\label{eq:odt-svd}
M(\A)=\sum_{i=1}^s\lambda_i\x_i(\x_i^{\otimes 2})^\mathsf{T}
\end{equation}
and
\[
\begin{bmatrix}\x_1&\dots&\x_s\end{bmatrix}
\]
is an orthonormal matrix. Likewise, the matrix
\[
\begin{bmatrix}\x_1^{\otimes 2}&\dots&\x_s^{\otimes 2}\end{bmatrix}
\]
is also orthonormal. Thus \eqref{eq:odt-svd} is a singular value decomposition (a.k.a. SVD \cite{GV-12}) of the matrix $M(\A)$. By the classical Eckart-Young-Mirsky theorem (cf.\ \cite{HJ-85}), the truncated SVD is an optimizer of the best rank-$r$ approximation problem for the matrix $M(\A)$. Thus, $\mathcal P_k(\bar\y)\in\operatorname{conv}(\Pi_{\operatorname{R}(r)}(M(\A)))$. The result then follows.
\end{proof}

\subsection{Quasi-Optimality}\label{sec:quasi-optimal}
In this section, we discuss the case when $\sigma>0$ in problem~\eqref{eq:non-opt-rank-reform}, which is related to quasi-optimal low rank approximations of the given tensor.
\begin{definition}\label{def:quasi-optimal}
Given a nonzero tensor $\mathcal A\in\operatorname{S}^3(\mathbb R^{n})$ and a positive integer $r$, let $\overline{\mathcal B}$ be a best rank-$r$ approximation of $\mathcal{A}$ and $\alpha\geq 0$. A tensor $\mathcal{B}\in \operatorname{S}^3(\mathbb R^{n})$ is called a {\rm$\alpha$-quasi-optimal rank-$r$ approximation} of $\mathcal{A}$ if
\begin{equation*}
\|\mathcal{A}-\overline{\mathcal{B}}\|^2\leq\|\mathcal{A}-\mathcal{B}\|^2\leq \|\mathcal{A}-\overline{\mathcal{B}}\|^2+\alpha.
\end{equation*}
\end{definition}

We first show that optimal solutions of \eqref{eq:non-opt-rank-reform} can actually give best rank one approximants.
\begin{proposition}\label{prop:app-sigma}
Let $\A\in\SS^3(\mathbb R^n)$ be nonzero, $r=1$, $(B,\y,X)$ be an optimal solution of \eqref{eq:non-opt-rank-reform} with $\sigma>0$. Then, we have $B=X(1,1)\x^{\otimes 3}$
for some $\x\in \mathbb S^{n-1}$.
If $\sigma<\rho(\A)$, then $(X(1,1)+\sigma)\x^{\otimes 3}$ is a best rank one approximation of $\mathcal A$.
\end{proposition}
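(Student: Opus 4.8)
The plan is to treat the two assertions separately: first read the rank-one \emph{tensor} shape off the matrix constraint and pin down the scalar via the moment structure, and then invoke Lagrangian strong duality together with the sum-of-squares description of dual feasibility to certify that the resulting rank-one tensor is globally best over the whole sphere.

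For the first assertion, I would start from the fact that $B=\mathcal P_k(\y)$ is precisely the matricization $M(\mathcal B)$ of the symmetric tensor $\mathcal B$ whose entries are the degree-three moments $y_{\e_i+\e_j+\e_k}$. Since $r=1$, the constraint $\operatorname{rank}(B)\le 1$ gives $B=\uu\ww^\mathsf T$ for some $\uu\in\Rn$ and $\ww\in\mathbb R^{n^2}$, so $b_{ijk}=u_i w_{jk}$; feeding in the symmetries $b_{ijk}=b_{jik}=b_{ikj}$ forces $\ww$ to be proportional to $\uu^{\otimes 2}$, hence $\mathcal B=c\,\x^{\otimes 3}$ with $\|\x\|=1$ and (after fixing the sign of $\x$) $c\ge 0$. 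It then remains to identify $c$ with $X(1,1)=y_{\0}$. The localizing identities $\mathcal L_k(\y)=0$ force the first-order moments $y_{\e_j}=c\,\x_j$ and the trace relation $\sum_i y_{2\e_i}=y_{\0}$; substituting these into $\mathcal M_1(\y)\succeq 0$ and taking a trace (a Schur-complement estimate) gives $y_{\0}\ge c$. Conversely, the single-atom measure $c\,\delta_{\x}$ is feasible, produces the same block $B$, and carries $y_{\0}=c$; since it leaves the quadratic term of $\psi$ unchanged while lowering $\sigma y_{\0}$, optimality together with $\sigma>0$ forces $y_{\0}\le c$. Hence $c=y_{\0}=X(1,1)$ and $B=X(1,1)\,\x^{\otimes 3}$.

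For the second assertion I would pass to the dual. Because $\sigma>0$, Lemma~\ref{lem:strict} supplies a strictly feasible dual point, so strong duality holds with attainment (primal attainment being Proposition~\ref{prop:level}); let $(U,W,Z)$ be an optimal dual triple. By Lemma~\ref{lem:feasibility} its feasibility is encoded by the requirement that $z(\x)=(1-\x^\mathsf T\x)w(\x)-u(\x)+\sigma$ be a sum of squares, where $u(\x)=\x^\mathsf T U\x^{\otimes 2}$ is the cubic form attached to $U$. The saddle-point/optimality relations (Proposition~\ref{prop:solution}) give $\mathcal P_k(\y)\in\Pi_{\operatorname{R}(1)}(M(\A)-U)$, i.e.\ $B=c\,\x(\x^{\otimes 2})^\mathsf T$ is a best rank-one approximation of the matrix $M(\A)-U$; by Eckart--Young $c$ is the largest singular value of $M(\A)-U$ and $\|\Pi_{\operatorname{R}(1)}(M(\A)-U)\|=\|B\|=c$. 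Matching the primal and dual optimal values, using $\langle M(\A),\x(\x^{\otimes 2})^\mathsf T\rangle=\langle\A,\x^{\otimes 3}\rangle$ and $\|M(\A)\|=\|\A\|$ (Proposition~\ref{prop:inner}), and dividing out $c>0$, the identity collapses to $\langle\A,\x^{\otimes 3}\rangle=c+\sigma$.

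Finally I would upgrade this to global optimality. For any unit $\vv$, the singular-value bound gives $\vv^\mathsf T(M(\A)-U)\vv^{\otimes 2}\le c$, while evaluating the sum-of-squares $z$ on the sphere gives $u(\vv)\le\sigma$; since $\langle\A,\vv^{\otimes 3}\rangle=\vv^\mathsf T(M(\A)-U)\vv^{\otimes 2}+u(\vv)$, adding these yields $\langle\A,\vv^{\otimes 3}\rangle\le c+\sigma$ for every $\vv\in\mathbb S^{n-1}$. Hence $\rho(\A)\le c+\sigma$, and combined with the equality attained at $\x$ we obtain $\rho(\A)=c+\sigma$ with $\x$ a maximizer of $\langle\A,\cdot^{\otimes 3}\rangle$. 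As the best rank-one approximation of $\A$ is $\rho(\A)\,\vv^{*\otimes 3}$ at any such maximizer $\vv^{*}$, the tensor $(X(1,1)+\sigma)\x^{\otimes 3}=(c+\sigma)\x^{\otimes 3}$ is indeed a best rank-one approximation; the hypothesis $\sigma<\rho(\A)$ is exactly what forces $c=\rho(\A)-\sigma>0$, so that $B\neq 0$ and the argument is non-degenerate. The main obstacle is this last step: converting the matrix-level Eckart--Young optimality on $M(\A)-U$ and the sphere-wide nonnegativity encoded by the SOS certificate into a \emph{global} maximality statement for $\langle\A,\vv^{\otimes 3}\rangle$; it is the tightness between the singular-value inequality and the bound $u\le\sigma$ on the sphere that makes the certificate exact.
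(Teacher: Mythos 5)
Your proof of the first assertion is correct and essentially the paper's own argument: the rank constraint forces $B=c\,\x(\x^{\otimes 2})^{\mathsf T}$ with $\|\x\|=1$ and $c\ge 0$; the relations encoded in $\mathcal L_k(\y)=0$ give $y_{\e_j}=c\,x_j$ and $y_{\0}=\sum_i y_{2\e_i}$; the Schur complement of $\mathcal M_1(\y)\succeq 0$ together with the trace identity yields $y_{\0}\ge c$; and comparing with the feasible point generated by the one-atomic measure $c\,\delta_{\x}$, which preserves the quadratic term while weakly lowering $\sigma\langle E_0,X\rangle$, yields $y_{\0}\le c$ precisely because $\sigma>0$. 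This matches the paper step for step.

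The second assertion is where your proposal has a genuine gap: the step ``Lemma~\ref{lem:strict} supplies a strictly feasible dual point, so strong duality holds with attainment.'' Slater-type conditions produce a zero duality gap only for convex programs; problem \eqref{eq:non-opt-rank-reform} is nonconvex because of the rank constraint, and the paper establishes only weak duality (Proposition~\ref{prop:lag-duality}) together with a \emph{sufficient} certificate for a zero gap (Theorem~\ref{thm:optimality}). Strict feasibility of \eqref{eq:lagrange-dual-min} closes the gap between the dual and its bidual---a convexification of the primal---not between the dual and the nonconvex primal itself. Concretely, a zero gap at the fixed relaxation order $k$ would require a feasible $U$ whose cubic form satisfies $u(\vv)\le\sigma$ on $\mathbb S^{n-1}$ via a degree-$2k$ SOS identity \emph{and} whose largest singular value of $M(\A)-U$ equals exactly $\rho(\A)-\sigma$; the paper constructs such a certificate only for orthogonally decomposable tensors (Proposition~\ref{prop:general-sigma}), and the entire point of Theorem~\ref{thm:optimality} is that such certificates must be exhibited and checked, not assumed. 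Since your identity $\langle\A,\x^{\otimes 3}\rangle=c+\sigma$ and the bound $\rho(\A)\le c+\sigma$ both flow from the assumed zero gap, the global-optimality conclusion is not established for a general primal optimizer. (A smaller flaw: you divide by $c>0$ before knowing $c=\rho(\A)-\sigma>0$, so the non-degeneracy claim is circular as written.) The paper's own proof avoids duality entirely: every one-atomic measure $\mu\delta_{\vv}$ yields a feasible point, so the optimal pair $(c,\x)$ must minimize $\frac{1}{2}\|M(\A)-\mu\vv(\vv^{\otimes 2})^{\mathsf T}\|^{2}+\sigma\mu$ over $\mu\ge 0$ and $\vv\in\mathbb S^{n-1}$; eliminating $\mu$ reduces the objective to $\frac{1}{2}\|\A\|^{2}-\frac{1}{2}\max\{\langle\A,\vv^{\otimes 3}\rangle-\sigma,0\}^{2}$, and the hypothesis $\sigma<\rho(\A)$ then forces $c=\rho(\A)-\sigma>0$ with $\x$ a maximizer of $\langle\A,\cdot^{\otimes 3}\rangle$ over the sphere. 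Replacing your strong-duality step by this elementary primal comparison repairs the argument.
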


\begin{proof}
Note that $B=\mathcal P_k(\y)$ has rank at most one by the feasibility, then it follows that
\[
B=\lambda\x(\x^{\otimes 2})^\mathsf{T}
\]
for some unit vector $\x\in\mathbb S^{n-1}$ and $\lambda\geq 0$, since a third order symmetric tensor $\mathcal U$ has rank one if and only if its flattening matrix $M(\mathcal U)$ has rank one \cite{L-12}. First of all, we will derive from the fact that the matrix $X=\mathcal M_k(\y)$ is positive semidefinite and $\mathcal L_k(\y)=0$ that $X(1,1)\geq \lambda$.
Actually, it follows from $\mathcal L_k(\y)=0$ that
\[
\mathcal M_1(\y)=\begin{bmatrix}\beta&\lambda\x^\mathsf{T}\\ \lambda\x&A\end{bmatrix}
\]
for some positive semidefinite matrix $A\in\SS^2(\mathbb R^n)$. The case when $\lambda=0$ is trivial. In the following, we assume that $\lambda>0$ and thus $\beta>0$ by the positive semidefiniteness of $\mathcal M_1(\y)$. By Schur's complement theory \cite{HJ-85}, we have that $A\succeq \frac{\lambda^2}{\beta}\x\x^\mathsf{T}$.
While, it follows from $\mathcal L_k(\y)=0$ that
\[
\beta = \operatorname{tr}(A)\geq \frac{\lambda^2}{\beta}.
\]
Thus $X(1,1) = \beta \geq \lambda$.
The result then follows.

As a result, the optimal $X$ of \eqref{eq:non-opt-rank-reform} must have the smallest possible $X(1,1)$, which is $\lambda$. Thus, $X$ must be of rank one and $\|B\|=\lambda=X(1,1)$. Let the optimal solution be $\lambda\x^{\otimes 3}$. We must have
\begin{equation*}
\frac{1}{2}\|M(\A)-\lambda\x(\x^{\otimes 2})^\mathsf{T}\|^2+\sigma\lambda\leq \min_{\mu\geq 0,\y\in\mathbb S^{n-1}}\Big\{\frac{1}{2}\|M(\A)-\mu\y(\y^{\otimes 2})^\mathsf{T}\|^2+\sigma\mu\Big\}
\end{equation*}
by the optimality.
It also follows from the optimality of $\lambda$, which is independent of $\x$, that
\[
\lambda = \max\{\langle\A,\x^{\otimes 3}\rangle-\sigma,0\}.
\]
Since $\sigma<\rho(\A)$, there is a $\y$ such that $\A\y^3-\sigma>0$. Thus, by the global optimality, $\lambda = \langle\A,\x^{\otimes 3}\rangle-\sigma$ and
\[
\frac{1}{2}\|M(\A)-\lambda\x(\x^{\otimes 2})^\mathsf{T}\|^2+\sigma\lambda=\frac{1}{2}\|\mathcal A\|^2-\frac{1}{2}\lambda^2.
\]
It further follows from the optimality that
\[
\lambda=\langle\A,\x^{\otimes 3}\rangle-\sigma=\max\{\langle\A,\y^{\otimes 3}\rangle-\sigma\colon \y\in\mathbb S^{n-1}\}=\rho(\A)-\sigma.
\]
Hence, $(\lambda+\sigma)\x^{\otimes 3}=\rho(\A)\x^{\otimes 3}$ is a best rank one approximation of $\mathcal A$.
The conclusion then follows.
\end{proof}

The coherence of a matrix $A=[\mathbf{x}_1,\dots,\mathbf{x}_r]$, denoted by $\mu(A)$, is defined as (cf.\ \cite{lim2013blind})
\begin{equation*}
\mu(A):=\max_{i\neq j}|\langle \mathbf{x}_i,\mathbf{x}_j\rangle|.
\end{equation*}
It follows that $\mu(A)$ is the maximum absolute value of the off-diagonal elements of $A^\mathsf{T}A$.
\begin{lemma}\label{lem:kappa}
Let $A=[\mathbf{x}_1,\dots,\mathbf{x}_r]$ be a given matrix with unit columns, and $C=A^\mathsf{T}A\circ A^\mathsf{T}A\circ A^\mathsf{T}A$, where $\circ$ is the Hadamard product.
Then,
\begin{enumerate}
\item if the $r$-th singular value of $A$ is not smaller than a constant $\kappa>0$, then $C$ is positive definite with its smallest eigenvalue not smaller than $\kappa^6$, and
\item if $r>1$ and the coherence $\mu(A)< \sqrt[3]{\frac{1}{r-1}}$, then $C$ is positive definite with its smallest eigenvalue not smaller than $1-(r-1)\mu(A)^3$.
\end{enumerate}
\end{lemma}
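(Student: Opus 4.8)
The plan is to work throughout with the Gram matrix $G := A^\mathsf{T}A\in\SS^2(\mathbb R^r)$, whose entries are $G_{ij}=\langle\x_i,\x_j\rangle$, so that $G$ is positive semidefinite with unit diagonal and $C=G\circ G\circ G$ is precisely its entrywise cube; equivalently $C_{ij}=(\x_i^\mathsf{T}\x_j)^3$. Two elementary facts will be used repeatedly: the smallest eigenvalue of $G$ equals the square of the $r$-th singular value of $A$, i.e.\ $\lambda_{\min}(G)=\sigma_r(A)^2$, and the normalization $\operatorname{tr}(G)=r$ forces $\lambda_{\min}(G)\leq 1$, hence $\sigma_r(A)\leq 1$ always.

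For the first assertion I would invoke the Schur product theorem together with its quantitative refinement: if $P,Q\succeq 0$, then $(P-\lambda_{\min}(P)I)\circ Q\succeq 0$, whence $P\circ Q\succeq \lambda_{\min}(P)\operatorname{diag}(Q)$ and therefore $\lambda_{\min}(P\circ Q)\geq \lambda_{\min}(P)\min_i Q_{ii}$. Applying this twice, first to $G\circ G$ and then to $(G\circ G)\circ G$, and using that both $G$ and $G\circ G$ have unit diagonal, I obtain $\lambda_{\min}(C)\geq \lambda_{\min}(G)=\sigma_r(A)^2\geq \kappa^2$. Since $\kappa\leq \sigma_r(A)\leq 1$, this yields $\lambda_{\min}(C)\geq \kappa^2\geq \kappa^6>0$, which is even stronger than the stated bound; in particular $C\succ 0$.

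For the second assertion I would argue by diagonal dominance. The matrix $C$ has $C_{ii}=1$ and, for $i\neq j$, $|C_{ij}|=|\langle\x_i,\x_j\rangle|^3\leq \mu(A)^3$ by the definition of the coherence. Gershgorin's circle theorem then locates every eigenvalue of $C$ within distance $\sum_{j\neq i}|C_{ij}|\leq (r-1)\mu(A)^3$ of $1$, so that $\lambda_{\min}(C)\geq 1-(r-1)\mu(A)^3$. The hypothesis $\mu(A)<\sqrt[3]{\frac{1}{r-1}}$ is exactly what makes $1-(r-1)\mu(A)^3>0$, giving positive definiteness and the claimed eigenvalue bound simultaneously.

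I expect the only genuine subtlety to be in the first part: the quantitative Hadamard-product inequality in fact delivers the stronger estimate $\lambda_{\min}(C)\geq \sigma_r(A)^2$, and reconciling this with the stated exponent requires the observation that the unit-column normalization forces $\sigma_r(A)\leq 1$, so that the weaker $\kappa^6$ follows (equivalently one may record $\lambda_{\min}(G^{\circ 3})\geq \lambda_{\min}(G)^3=\sigma_r(A)^6\geq \kappa^6$). The second part is entirely routine once the entrywise estimate $|C_{ij}|\leq \mu(A)^3$ is in hand.
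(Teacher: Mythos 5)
Your proof is correct, and it splits naturally against the paper's. For part (2) you and the paper are on the same track: the paper defers to ``a similar proof as that for [lim2013blind, Theorem~25]'', which is precisely the diagonal-dominance estimate you spell out --- $C_{ii}=1$, $|C_{ij}|=|\langle\mathbf{x}_i,\mathbf{x}_j\rangle|^3\leq\mu(A)^3$ for $i\neq j$, and Gershgorin then gives $\lambda_{\min}(C)\geq 1-(r-1)\mu(A)^3$, positive exactly under the coherence hypothesis. For part (1), however, you take a genuinely different route. The paper invokes the eigenvalue comparison of \cite{hiai2017eigenvalue} between the Hadamard cube and the ordinary matrix cube: with $G=A^\mathsf{T}A$, it uses $\lambda_{\min}(G\circ G\circ G)\geq\lambda_{\min}(G^3)=\lambda_{\min}(G)^3=\sigma_r(A)^6\geq\kappa^6$, landing exactly on the stated constant without any normalization assumption on the diagonal. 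You instead use only Schur's classical quantitative form of the Hadamard product theorem, $\lambda_{\min}(P\circ Q)\geq\lambda_{\min}(P)\min_i Q_{ii}$ for $P,Q\succeq 0$ (which follows, as you note, from $(P-\lambda_{\min}(P)I)\circ Q\succeq 0$ and $P\circ Q\succeq\lambda_{\min}(P)\operatorname{diag}(Q)$); applied twice with the unit diagonals of $G$ and $G\circ G$, this yields the sharper bound $\lambda_{\min}(C)\geq\lambda_{\min}(G)=\sigma_r(A)^2\geq\kappa^2$. The stated exponent then follows from $\kappa^2\geq\kappa^6$, and you correctly close the one real gap in this route: the unit-column normalization forces $\lambda_{\min}(G)\leq\operatorname{tr}(G)/r=1$, hence $\kappa\leq\sigma_r(A)\leq 1$ whenever the hypothesis is satisfiable (and it is vacuous if $\kappa>1$). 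Your argument is thus more elementary --- no external eigenvalue-comparison theorem is needed --- and quantitatively stronger: it proves the lemma with $\kappa^2$ in place of $\kappa^6$, which, if propagated through \eqref{eq:tau-a}, would improve $\tau(\A)$ and hence the quasi-optimality constants in Proposition~\ref{prop:appr-rank-r} and Theorem~\ref{thm:best}. What the paper's approach buys in exchange is a degree-matching bound $\lambda_{\min}(G)^3$ that does not use $G_{ii}=1$ at all, so it survives verbatim for non-normalized columns, whereas your $\kappa^2$ bound genuinely exploits the unit diagonal; within the hypotheses of this lemma, though, both are fully valid.
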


\begin{proof}
For the first one, since the $r$-th singular value of $A$ is greater than $\kappa>0$, we see that $A^\mathsf{T}A$ is positive definite with the smallest eigenvalue being greater than $\kappa^2$. It follows from \cite{hiai2017eigenvalue} that the smallest eigenvalue of $C$ is not smaller than the smallest eigenvalue of $(A^\mathsf{T}A)^3$, which is greater than $\kappa^6$.

The second one follows from a similar proof as that for \cite[Theorem~25]{lim2013blind}.
The conclusion follows.
\end{proof}

For $r>1$, the set of rank at most $r$ tensors has complicated geometry \cite{DL-08,L-12}. It may happen that the set of factor vectors $\{\mathbf{x}^{(k)}_1,\dots,\mathbf{x}^{(k)}_r\}$ for a best rank-$r$ approximation tensor sequence approaches to the boundary of the
set of matrices with rank at most $r$. Thus, in addition to the existence Assumption~\ref{assmp:well-defined}, we should also
make a well-conditioned assumption on a best rank-$r$ approximation for a given tensor $\A$.
\begin{assumption}\label{assmp:well-condition}
For a given tensor $\mathcal{A}$, there is a best rank-$r$ approximation $\mathcal{B}=\sum_{i=1}^r\lambda_i\mathbf{x}_i^{\otimes 3}$ such that
\begin{enumerate}
\item either the $r$-th singular value of the factor matrix
$A:=[\mathbf{x}_1,\dots,\mathbf{x}_r]$ is greater than a constant $\kappa(\mathcal{A})>0$,\label{assmp:nonsingular}
\item or the coherence $\mu(A)< \sqrt[3]{\frac{1}{r-1}}$ and $r>1$. \label{assmp:coherence}
\end{enumerate}
\end{assumption}

Assumption~\ref{assmp:well-condition} actually indicates that a rank one decomposition of a best rank-$r$ approximant of $\mathcal{A}$ is well-conditioned. It only needs the existence of such a best approximant, and do not require that all best approximants satisfy this condition. If Assumption~\ref{assmp:well-condition} is satisfied, we define
\begin{equation}\label{eq:tau-a}
\tau(\A):=\begin{cases}\max\{\kappa(\A)^6,1-(r-1)\mu(A)^3\}&\text{if both \eqref{assmp:nonsingular} and \eqref{assmp:coherence} hold},\\ \kappa(\A)^6&\text{if only \eqref{assmp:nonsingular} holds},\\1-(r-1)\mu(A)^3&\text{if only \eqref{assmp:coherence} holds}.  \end{cases}
\end{equation}

Note that Proposition~\ref{prop:app-sigma} does not require the flatness condition.
However, if $r>1$, then we need to assume the flatness condition.
\begin{proposition}\label{prop:appr-rank-r}
Let $\A\in\SS^3(\mathbb R^n)$ be nonzero and have rank greater than two, $r\geq 2$, $\overline{\mathcal{B}}$ be a best rank-$r$ approximant of $\A$ satisfying Assumption~\ref{assmp:well-condition}, and $(B,\y,X)$ be an optimal solution of \eqref{eq:non-opt-rank-reform} with $\sigma\in (0, \frac{\tau(\A)\rho(\A)}{2r})$. Suppose that $\y$ satisfies $\operatorname{rank}(\mathcal M_{k-1}(\y))=\operatorname{rank}(\mathcal M_k(\y))\leq r$. Then $\mathcal{B}$ gives a $\alpha$-quasi-optimal rank-$r$ approximation of $\mathcal A$ with $\alpha$ given by
\begin{equation*}
\alpha:=2\sqrt{\frac{r}{\tau(\A)}}\bigg(\Big(1-\sqrt{\frac{\tau(\A)}{r}}\Big)\|\A\|+2\sigma\bigg)\sigma.
\end{equation*}
\end{proposition}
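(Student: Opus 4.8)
The plan is to reduce the quasi-optimality estimate to a one-variable inequality in the norms $\|\mathcal B\|$ and $\|\overline{\mathcal B}\|$, by exploiting two stationarity identities obtained from scaling. First I would use the flatness hypothesis $\operatorname{rank}(\mathcal M_{k-1}(\y))=\operatorname{rank}(\mathcal M_k(\y))\le r$ together with the Curto--Fialkow mechanism behind Proposition~\ref{prop:exact} to write $\y$ as the moment sequence of an $r'$-atomic measure $\mu=\sum_{i}\lambda_i\delta_{\x_i}$ with $r'\le r$, $\lambda_i>0$ and $\x_i\in\mathbb S^{n-1}$, so that $\mathcal B=\sum_i\lambda_i\x_i^{\otimes 3}$ (with $B=M(\mathcal B)$) and the penalty term equals the total mass $\langle E_0,X\rangle=y_{\0}=\sum_i\lambda_i$. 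The key observation is that the scaling $\y\mapsto(1+t)\y$ preserves every constraint of \eqref{eq:non-opt-rank-reform} for small $|t|$ (rank is unchanged, $\mathcal M_k$ stays positive semidefinite, $\mathcal L_k$ is linear); since $\lambda_i>0$, $t=0$ is an interior minimizer of the objective along this direction, and differentiating yields $\langle\A,\mathcal B\rangle=\|\mathcal B\|^2+\sigma\sum_i\lambda_i$. Substituting this back gives the clean residual formula $\|\A-\mathcal B\|^2=\|\A\|^2-\|\mathcal B\|^2-2\sigma\sum_i\lambda_i$. Running the same scaling argument on the unpenalized best approximant $\overline{\mathcal B}=\sum_i\bar\lambda_i\bar\x_i^{\otimes 3}$ gives $\langle\A,\overline{\mathcal B}\rangle=\|\overline{\mathcal B}\|^2$, hence $\|\A-\overline{\mathcal B}\|^2=\|\A\|^2-\|\overline{\mathcal B}\|^2$. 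Subtracting, the quantity to control becomes
\[
\|\A-\mathcal B\|^2-\|\A-\overline{\mathcal B}\|^2=\|\overline{\mathcal B}\|^2-\|\mathcal B\|^2-2\sigma\sum_i\lambda_i.
\]

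Next I would assemble the inequalities that pin down $\|\mathcal B\|$ and $\sum_i\lambda_i$. Feeding the atomic decomposition of $\overline{\mathcal B}$ into \eqref{eq:non-opt-rank-reform} produces a feasible point $\bar\y$, and comparing objective values (optimality of $\y$, cf.\ Proposition~\ref{prop:lag-duality}) combined with the residual formula above yields the norm lower bound $\|\mathcal B\|^2\ge\|\overline{\mathcal B}\|^2-2\sigma\sum_i\bar\lambda_i$. The conditioning hypothesis enters only through $\overline{\mathcal B}$: writing $\|\overline{\mathcal B}\|^2=\bar\lambda^\mathsf{T}C\bar\lambda$ with $C=A^\mathsf{T}A\circ A^\mathsf{T}A\circ A^\mathsf{T}A$ for $A=[\bar\x_1,\dots,\bar\x_r]$, Lemma~\ref{lem:kappa} and Assumption~\ref{assmp:well-condition} give $\|\overline{\mathcal B}\|^2\ge\tau(\A)\|\bar\lambda\|^2\ge\tau(\A)(\sum_i\bar\lambda_i)^2/r$, whence $\sum_i\bar\lambda_i\le\sqrt{r/\tau(\A)}\,\|\overline{\mathcal B}\|$ and therefore $\|\mathcal B\|^2\ge\|\overline{\mathcal B}\|^2-2\sigma\sqrt{r/\tau(\A)}\,\|\overline{\mathcal B}\|$. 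On the other side the triangle inequality gives $\sum_i\lambda_i\ge\|\mathcal B\|$. Finally I would record $\|\overline{\mathcal B}\|\le\|\A\|$ (from $\langle\A,\overline{\mathcal B}\rangle=\|\overline{\mathcal B}\|^2$ and Cauchy--Schwarz) and $\|\overline{\mathcal B}\|\ge\rho(\A)$ (since $\overline{\mathcal B}$ is no worse than the best rank-one approximant, whose residual is $\|\A\|^2-\rho(\A)^2$).

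Finally I would close the estimate as a scalar optimization. Writing $b:=\|\mathcal B\|$, $\bar b:=\|\overline{\mathcal B}\|$ and $q:=\sqrt{r/\tau(\A)}\ge 1$, the displayed difference is at most $\bar b^2-b^2-2\sigma b$, which is decreasing in $b\ge 0$; since $b\ge b_{\min}:=\sqrt{\bar b^2-2\sigma q\bar b}$, it is bounded by $2\sigma(q\bar b-b_{\min})$. The range $\sigma\in(0,\tau(\A)\rho(\A)/(2r))$ forces $2\sigma q<\rho(\A)\le\bar b$, so $b_{\min}$ is real and $\bar b-\sigma q>0$; then $b_{\min}=\sqrt{\bar b^2-2\sigma q\bar b}\ge\bar b-\sigma q$, and using $\bar b\le\|\A\|$,
\[
\|\A-\mathcal B\|^2-\|\A-\overline{\mathcal B}\|^2\le 2\sigma\big((q-1)\bar b+\sigma q\big)\le 2\sigma\big((q-1)\|\A\|+2\sigma q\big)=\alpha .
\]
Together with $\|\A-\overline{\mathcal B}\|^2\le\|\A-\mathcal B\|^2$ (optimality of $\overline{\mathcal B}$), this is exactly the quasi-optimality bound of Definition~\ref{def:quasi-optimal}. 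I expect the main obstacle to be making the two scaling identities rigorous, in particular verifying that the measure-scaling perturbation stays feasible so that differentiation at $t=0$ is legitimate, and deriving the norm lower bound from a correctly constructed test point; once these are in place, converting Assumption~\ref{assmp:well-condition} through Lemma~\ref{lem:kappa} into the mass bound and carrying out the final square-root estimate are routine.
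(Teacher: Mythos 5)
Your proposal is, modulo packaging, the paper's own proof: flatness yields the atomic representation of $\y$; comparing the objective of \eqref{eq:non-opt-rank-reform} at $\y$ with the feasible point built from $\overline{\mathcal B}$ is exactly the paper's inequality \eqref{eq:quai-rank-r}; your stationarity identity $\langle\A,\mathcal B\rangle=\|\mathcal B\|^2+\sigma\sum_i\lambda_i$ is what the paper obtains by setting the $\lambda$-gradient of \eqref{eq:lambda} to zero (your radial scaling $\y\mapsto(1+t)\y$ is a clean one-parameter version of that computation, and is legitimate since all constraints of \eqref{eq:non-opt-rank-reform} are scaling-invariant for $t>-1$); Lemma~\ref{lem:kappa} plus Assumption~\ref{assmp:well-condition} converts into the mass bound exactly as in \eqref{eq:lambda-1}; and your translation $b^2=\lambda^\mathsf{T}C\lambda$, $\bar b^2=\overline{\lambda}^\mathsf{T}\overline{C}\,\overline{\lambda}$, $\Lambda\geq b$ recovers the paper's chain \eqref{eq:lambda-3}--\eqref{eq:lambda-2} with tensor norms in place of $\|\lambda\|_1$, $\|\overline{\lambda}\|_1$ (your scalar minimization of $f(b)=\bar b^2-b^2-2\sigma b$ replaces the paper's completing-the-square). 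Your route to $\bar b\geq\rho(\A)$ via the rank-one approximant is even slightly simpler than the paper's, which invokes strictness from $\rk(\A)>2$.

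There is, however, one step that is false as written: with $b_{\min}:=\sqrt{\bar b^2-2\sigma q\bar b}$ and $q:=\sqrt{r/\tau(\A)}$, you claim $b_{\min}\geq\bar b-\sigma q$. Squaring shows $(\bar b-\sigma q)^2=b_{\min}^2+\sigma^2q^2>b_{\min}^2$, so in fact $b_{\min}<\bar b-\sigma q$; the inequality runs the wrong way, and you need a \emph{lower} bound on $b_{\min}$. The repair is immediate: since $\sqrt{1-x}\geq 1-x$ on $[0,1]$ and $0<2\sigma q/\bar b<1$ (here is where $\sigma<\tau(\A)\rho(\A)/(2r)$, $\bar b\geq\rho(\A)$ and $\tau(\A)\leq 1\leq r$ enter), one gets $b_{\min}\geq \bar b-2\sigma q$, whence the difference is at most $2\sigma\big(q\bar b-b_{\min}\big)\leq 2\sigma\big((q-1)\bar b+2\sigma q\big)\leq 2\sigma\big((q-1)\|\A\|+2\sigma q\big)=\alpha$. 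Conveniently, the slack you introduced in your last line (replacing $\sigma q$ by $2\sigma q$) exactly absorbs the lost factor of two, so the corrected argument lands on precisely the stated $\alpha$, matching \eqref{eq:app-rank-r}. You should also record explicitly that $q\geq 1$, which you use in $(q-1)\bar b\leq(q-1)\|\A\|$: this holds because a matrix with $r$ unit columns has $r$-th singular value at most $1$, so $\tau(\A)\leq 1$ under either branch of \eqref{eq:tau-a}. With these two small patches the proof is complete and essentially coincides with the paper's.
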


\begin{proof}
By the flatness hypothesis, we know that $\mathbf{y}$ has an atomic measure with rank $s$ at most $r$.  Let it be $\mu=\sum_{i=1}^s\lambda_i\delta_{\mathbf{x}_i}$. Then $B=\sum_{i=1}^s\lambda_i\mathbf{x}_i\big(\mathbf{x}_i^{\otimes 2}\big)^\mathsf{T}$, and we have that
\begin{equation}\label{eq:quai-rank-r}
\frac{1}{2}\|M(\mathcal A)-B\|^2+\sigma\mathbf{e}^\mathsf{T}\lambda \leq \frac{1}{2}\|M(\mathcal{A})-\overline{B}\|^2+\sigma\mathbf{e}^\mathsf{T}\overline{\lambda},
\end{equation}
where $\mathbf{e}$ is the vector of all ones,
$\overline{B} = \sum_{i=1}^r \overline{\lambda}_i \overline{\x}_i (\overline{\x}_i^{\otimes 2})^\top$ (with $\overline{\lambda}_i > 0$ and $\|\overline{\x}_i \|=1$ for all $i=1,\ldots,r$) corresponds to the best rank-$r$ approximation
$\overline{\mathcal{B}}$ of $\mathcal A$ and $\overline{\lambda}$ is the corresponding positive coefficients. Then, we have
\[
  \frac{1}{2}\|M(\mathcal A)-B\|^2\leq \frac{1}{2}\|M(\mathcal{A})-\overline{B}\|^2+\sigma\big(\mathbf{e}^\mathsf{T}\overline{\lambda}-\mathbf{e}^\mathsf{T}\lambda\big).
\]
We note that possibly, $\lambda$ and $\overline{\lambda}$ have different lengths.
An observation is that $\mathbf{e}^\mathsf{T}\overline{\lambda}$ for a best rank-$r$ approximation is not smaller than the largest $\mathbf{e}^\mathsf{T}\lambda$ over all optimal solutons of \eqref{eq:non-opt-rank-reform} satisfying the flatness condition, since otherwise the optimality is violated.

By the optimality of the best rank-$r$ approximation, we have
\[
\frac{1}{2}\|M(\mathcal A)-\overline{B}\|^2=\frac{1}{2}\|M(\mathcal{A})\|^2-\frac{1}{2}\overline{\lambda}^\mathsf{T}\overline{C}\overline{\lambda},
\]
where $\overline{C}:=\overline{A}^\mathsf{T}\overline{A}\circ \overline{A}^\mathsf{T}\overline{A}\circ \overline{A}^\mathsf{T}\overline{A}$ with $\overline{A}:=[\overline{\mathbf{x}}_1,\dots,\overline{\mathbf{x}}_r]$.
Thus,
\[
\overline{\lambda}^\mathsf{T}\overline{C}\overline{\lambda}\leq \|\A\|^2.
\]
By Lemma~\ref{lem:kappa} and the hypothesis, we know that the smallest eigenvalue of the positive definite matrix $\overline{C}$ is lower bounded by $\tau(\A)$ given by \eqref{eq:tau-a}. Therefore,
\begin{equation}\label{eq:lambda-1}
\|\overline{\lambda}\|_1 \;=\; \mathbf{e}^\mathsf{T}\overline{\lambda}\leq \sqrt{r}\|\overline{\lambda}\|\leq \frac{\sqrt{r}}{\sqrt{\tau(\A)}}\|\A\|.
\end{equation}

On the other hand, we have
\begin{eqnarray*}
\rho(\A)^2 < \overline{\lambda}^\mathsf{T}\overline{C}\overline{\lambda}
 \leq \sum_{i=1}^r\sum_{j=1}^r \overline{\lambda}_i \overline{\lambda}_j |\overline{C}_{ij}|
 \leq
  \sum_{i=1}^r\sum_{j=1}^r \overline{\lambda}_i \overline{\lambda}_j  =
  \|\overline{\lambda}\|_1^2,
\end{eqnarray*}
where the strict inequality follows from the fact that a best rank $r\geq 2$ approximation must be strictly better than the best rank one approximation, and the second inequality from $|\overline{C}_{ij}| = |(\overline{A}^\top\overline{A})_{ij}|^3
= |\overline{\x}_i^\top \overline{\x}_j|^3 \leq 1$. Thus, $\|\overline{\lambda}\|_1 \geq \rho(\A)$.
Similarly, we also have $\lambda^\top C \lambda \leq \|\lambda \|_1^2$,
where $C:=A^\mathsf{T}A\circ A^\mathsf{T}A\circ A^\mathsf{T}A$ with $A:=[\mathbf{x}_1,\dots,\mathbf{x}_s]$.

Expanding the left hand side of the inequality \eqref{eq:quai-rank-r}, it becomes
\begin{equation}\label{eq:lambda}
\frac{1}{2}\|M(\mathcal{A})\|^2-\sum_{i=1}^s\lambda_i\langle\mathcal{A},\mathbf{x}_i^{\otimes 3}\rangle+\frac{1}{2}\lambda^\mathsf{T}C\lambda+\sigma\mathbf{e}^\mathsf{T}\lambda.
\end{equation}

Note that each $\lambda_i>0$, and thus by the optimality of $\lambda$ (i.e., setting the derivative of the above expression with respect to $\lambda$ to zero), we must have
\begin{equation*}
  \sum_{i=1}^s\lambda_i\langle\mathcal{A},\mathbf{x}_i^{\otimes 3}\rangle-\sigma\mathbf{e}^\mathsf{T}\lambda=\lambda^\mathsf{T}C\lambda.
\end{equation*}
Consequently, we have
\[
  \frac{1}{2}\|M(\mathcal A)-B\|^2+\sigma\mathbf{e}^\mathsf{T}\lambda=\frac{1}{2}\|M(\mathcal{A})\|^2-\frac{1}{2}\lambda^\mathsf{T}C\lambda.
\]
Thus, from \eqref{eq:quai-rank-r}, we have
\[
-\frac{1}{2}\lambda^\mathsf{T}C\lambda\leq -\frac{1}{2}\overline{\lambda}^\mathsf{T}\overline{C}\overline{\lambda}+\sigma\mathbf{e}^\mathsf{T}\overline{\lambda}.
\]
This, together with the fact that $\lambda^\top C \lambda \leq \|\lambda \|_1^2$, implies
\begin{align}\label{eq:lambda-3}
\|\lambda\|_1^2&\geq  \lambda^\mathsf{T}C\lambda\geq \overline{\lambda}^\mathsf{T}\overline{C}\overline{\lambda}-2\sigma\mathbf{e}^\mathsf{T}\overline{\lambda}\nonumber\\
&\geq \tau(\A)\|\overline{\lambda}\|^2-2\sigma\mathbf{e}^\mathsf{T}\overline{\lambda}\geq \frac{\tau(\A)}{r}\|\overline{\lambda}\|_1^2-2\sigma\|\overline{\lambda}\|_1\nonumber\\
&=\frac{\tau(\A)}{r}\Big(\|\overline{\lambda}\|_1^2-4\frac{r\sigma}{\tau(\A)}\|\overline{\lambda}\|_1+\big(\frac{2r\sigma}{\tau(\A)}\big)^2
+2\frac{r\sigma}{\tau(\A)}\|\overline{\lambda}\|_1-\big(2\frac{r\sigma}{\tau(\A)}\big)^2\Big).
\end{align}
This, together with \eqref{eq:lambda-1} and $\|\overline{\lambda}\|_1\geq\rho(\A)$, implies that when $\sigma\leq \frac{\tau(\A)\rho(\A)}{2r}$, we have
\begin{equation}\label{eq:lambda-2}
\|\lambda\|_1\geq \sqrt{\frac{\tau(\A)}{r}}\Big(\|\overline{\lambda}\|_1-2\frac{r}{\tau(\A)}\sigma\Big).
\end{equation}

It follows from \eqref{eq:quai-rank-r}, \eqref{eq:lambda-1}, and \eqref{eq:lambda-2} that
\begin{equation}\label{eq:app-rank-r}
\|M(\mathcal A)-B\|^2\leq \|M(\mathcal{A})-\overline{B}\|^2+2\sigma\bigg(\Big(1-\sqrt{\frac{\tau(\A)}{r}}\Big)\sqrt{\frac{r}{\tau(\A)}}\|\A\|+2\sqrt{\frac{r}{\tau(\A)}}\sigma\bigg).
\end{equation}
This completes the proof.
\end{proof}

We can apply a similar refinement technique as Proposition~\ref{prop:app-sigma} to improve the quality of $B$ given in Proposition~\ref{prop:appr-rank-r}. Actually, by the optimality of $\lambda$ in \eqref{eq:lambda}, we have that $\lambda=C^{-1}(\mathbf{u}-\sigma\mathbf{e})$ and hence
\[
\|M(\mathcal A)-B\|^2=\|\A\|^2-2\lambda^\mathsf{T}\mathbf{u}+\lambda^\mathsf{T}C\lambda=\|\A\|^2-\mathbf{u}^\mathsf{T}C^{-1}\mathbf{u}+\sigma^2\mathbf e^\mathsf{T}C^{-1}\mathbf e,
\]
where $\mathbf u:=(\A\x_1^3,\dots,\A\x_s^3)^\mathsf{T}$. Given the vectors $\x_1,\dots,\x_s$, we can optimize their coefficients to an approximant $\mathcal B':=\sum_{i=1}^s\mu_i\x_i^{\otimes 3}$ such that
\[
\|\A-\mathcal B'\|^2=\|\A\|^2-\mathbf{u}^\mathsf{T}C^{-1}\mathbf{u}.
\]
This can reduce at least the amount of
 $\sigma^2$ from \eqref{eq:app-rank-r} since $\mathbf e^\mathsf{T}C^{-1}\mathbf e\geq 1$.
Note that when $r=1$, we can take $\tau(\A)=\kappa(\A)=1$, and we thus get a $4\sigma^2$-quasi-optimality estimation from \eqref{eq:app-rank-r}. While, even with the above refinement, we can only get $3\sigma^2$-quasi-optimality; but we know a best rank one approximant can be recovered by Proposition~\ref{prop:app-sigma}. This follows largely by the estimations from \eqref{eq:lambda-3} and \eqref{eq:lambda-2}, and it indicates that there are some room for improvement.
Nevertheless, the next result shows that the estimation of $\sigma^2$ in Proposition~\ref{prop:appr-rank-r} cannot be eliminated.
\begin{proposition}\label{prop:general-sigma}
If $\A$ is an orthogonally decomposable tensor with rank $s\leq n$, then for $\sigma\in[0,\lambda_r-\lambda_{r+1}]$\footnote{We let $\lambda_{s+1}=0$ if needed.} with $r\leq s$, $\mu:=\sum_{i=1}^r(\lambda_i-\sigma)\delta_{\x_i}$ gives a global optimizer of problem \eqref{eq:non-opt-rank-reform} and a $r\sigma^2$-quasi-optimal rank-$r$ approximation of $\A$.
\end{proposition}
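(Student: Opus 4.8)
The plan is to certify that the moment sequence $\bar\y$ generated by $\mu=\sum_{i=1}^r(\lambda_i-\sigma)\delta_{\x_i}$ is a global optimizer of \eqref{eq:non-opt-rank-reform} by invoking the dual certification of Theorem~\ref{thm:optimality}, and then to read off the $r\sigma^2$-quasi-optimality from a direct norm computation. First I would record that $\mu$ is a genuine nonnegative measure: since $\sigma\le\lambda_r-\lambda_{r+1}$ and $\lambda_{r+1}\ge 0$, each weight satisfies $\lambda_i-\sigma\ge\lambda_r-\sigma\ge\lambda_{r+1}\ge 0$. Feasibility of $\bar\y$ for \eqref{eq:non-opt-rank-reform} is then immediate: $\mathcal L_k(\bar\y)=0$ and $\mathcal M_k(\bar\y)\succeq 0$ because $\mu$ is supported on the sphere, while $\mathcal P_k(\bar\y)=\sum_{i=1}^r(\lambda_i-\sigma)\x_i(\x_i^{\otimes 2})^\mathsf{T}=:\bar B$ has rank at most $r$.

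Next I would construct the dual certificate. Exploiting that $\{\x_i\}$ and $\{\x_i^{\otimes 2}\}$ are orthonormal, so that $M(\A)=\sum_{i=1}^s\lambda_i\x_i(\x_i^{\otimes 2})^\mathsf{T}$ is an SVD (as in the proof of Theorem~\ref{thm:exact-relaxtion-0}), I would take the dual variable $\bar U$ corresponding to the cubic form $u(\x)=\sigma\sum_{i=1}^r(\x_i^\mathsf{T}\x)^3$, i.e.\ $\bar U=\sigma\sum_{i=1}^r\x_i(\x_i^{\otimes 2})^\mathsf{T}$. Then $M(\A)-\bar U=\sum_{i=1}^r(\lambda_i-\sigma)\x_i(\x_i^{\otimes 2})^\mathsf{T}+\sum_{i=r+1}^s\lambda_i\x_i(\x_i^{\otimes 2})^\mathsf{T}$ is again an SVD, whose singular values are $\lambda_1-\sigma\ge\dots\ge\lambda_r-\sigma\ge\lambda_{r+1}\ge\dots\ge\lambda_s$; the ordering across the gap uses precisely $\sigma\le\lambda_r-\lambda_{r+1}$. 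Hence by Eckart--Young--Mirsky the rank-$r$ truncation of $M(\A)-\bar U$ equals $\bar B=\mathcal P_k(\bar\y)$, giving $\mathcal P_k(\bar\y)\in\Pi_{\operatorname{R}(r)}(M(\A)-\bar U)$ and $\operatorname{rank}(\mathcal P_k(\bar\y))\le r$.

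It then remains to verify dual feasibility and complementary slackness. By Lemma~\ref{lem:feasibility}, feasibility reduces to exhibiting $w(\x)$ (with $\deg w\le 2k-2$) making $z(\x)=(1-\x^\mathsf{T}\x)w(\x)-u(\x)+\sigma$ a sum of squares. This is the step I expect to be the main obstacle, and I would handle it by an explicit certificate in the orthonormal coordinates $t_i:=\x_i^\mathsf{T}\x$ (extending $\{\x_i\}_{i=1}^s$ to an orthonormal basis of $\mathbb R^n$). Using the identity $1-t_i=\tfrac12(1-t_i^2)+\tfrac12(1-t_i)^2$ together with $1-t_i^2=(1-\x^\mathsf{T}\x)+\sum_{j\ne i}t_j^2$, one rewrites $-u(\x)+\sigma=\sigma\big(1-\sum_{i=1}^rt_i^3\big)$ as a multiple of $(1-\x^\mathsf{T}\x)$ plus the sum of squares $\sigma\sum_{i=1}^r\big(\tfrac12\sum_{j\ne i}(t_it_j)^2+\tfrac12(t_i(1-t_i))^2\big)+\sigma\sum_{j=r+1}^nt_j^2$; choosing $w(\x)=-\sigma-\tfrac\sigma2\sum_{i=1}^r(\x_i^\mathsf{T}\x)^2$ cancels the ideal multiple and leaves $z(\x)$ equal to this sum of squares, so that the associated $\bar Z\succeq 0$. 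Since $u(\x_i)=\sigma$ and $\x^\mathsf{T}\x=1$ at each atom, we get $z(\x_i)=0$, whence $\langle\bar Z,\mathcal M_k(\bar\y)\rangle=\sum_{i=1}^r(\lambda_i-\sigma)z(\x_i)=0$. All hypotheses of Theorem~\ref{thm:optimality} now hold, so $\bar\y$ is globally optimal for \eqref{eq:non-opt-rank-reform}.

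Finally, the quasi-optimality is a direct orthogonality computation. With $\bar B=\mathcal P_k(\bar\y)$ one has $M(\A)-\bar B=\sigma\sum_{i=1}^r\x_i(\x_i^{\otimes 2})^\mathsf{T}+\sum_{i=r+1}^s\lambda_i\x_i(\x_i^{\otimes 2})^\mathsf{T}$, and by orthonormality $\|M(\A)-\bar B\|^2=r\sigma^2+\sum_{i=r+1}^s\lambda_i^2$. Since the best rank-$r$ approximant is $\overline{\mathcal B}=\sum_{i=1}^r\lambda_i\x_i^{\otimes 3}$ with $\|\A-\overline{\mathcal B}\|^2=\sum_{i=r+1}^s\lambda_i^2$ (Eckart--Young--Mirsky applied to $M(\A)$, cf.\ Theorem~\ref{thm:exact-relaxtion-0}), the tensor $\mathcal B=\sum_{i=1}^r(\lambda_i-\sigma)\x_i^{\otimes 3}$ corresponding to $\mu$ satisfies $\|\A-\mathcal B\|^2=\|\A-\overline{\mathcal B}\|^2+r\sigma^2$, which is exactly the asserted $r\sigma^2$-quasi-optimality.
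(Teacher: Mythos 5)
Your proof is correct, and it follows the paper's overall architecture step for step: the same primal candidate $\bar\y$ generated by $\mu$, the same cubic certificate $u(\x)=\sigma\sum_{i=1}^r(\x_i^\mathsf{T}\x)^3$ and hence the same $\bar U$, the identification $\mathcal P_k(\bar\y)\in\Pi_{\operatorname{R}(r)}(M(\A)-\bar U)$ via Eckart--Young--Mirsky using precisely $\sigma\leq\lambda_r-\lambda_{r+1}$ to order the singular values across the gap, complementarity from $z(\x_i)=0$ at every atom, global optimality via Theorem~\ref{thm:optimality}, and the closing orthogonality computation giving the exact excess $r\sigma^2$. The one genuine divergence is the sum-of-squares step, which you correctly flagged as the crux. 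The paper chooses $w(\x)=\frac{3\sigma}{2}\,\x^\mathsf{T}\x$ (multiplying $\x^\mathsf{T}\x-1$), reduces after an orthogonal change of variables to the polynomial $1-\sum_{i=1}^r x_i^3+\frac{3}{2}(\x^\mathsf{T}\x)(\x^\mathsf{T}\x-1)$, and then cites Tang and Shah \cite{TS-15} for its SOS-ness; you instead choose the different multiplier $w(\x)=-\sigma-\frac{\sigma}{2}\sum_{i=1}^r(\x_i^\mathsf{T}\x)^2$ and exhibit the certificate explicitly. Your identity checks out: with $t_i=\x_i^\mathsf{T}\x$ in an orthonormal extension of $\{\x_i\}$, one has
\begin{equation*}
1-\sum_{i=1}^r t_i^3=(1-\x^\mathsf{T}\x)\Big(1+\tfrac12\sum_{i=1}^r t_i^2\Big)+\sum_{i=1}^r\Big(\tfrac12\sum_{j\neq i}(t_it_j)^2+\tfrac12\big(t_i(1-t_i)\big)^2\Big)+\sum_{j=r+1}^n t_j^2,
\end{equation*}
so your $z$ is manifestly $\sigma$ times a sum of squares of degree at most $2k=4$, vanishing at each $\x_i$; since dual multipliers are not unique, using a $w$ different from the paper's is harmless. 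What your route buys is self-containedness --- the single external citation in the paper's proof is replaced by an elementary two-line identity --- and as a bonus you make explicit two points the paper leaves implicit: that $\lambda_i-\sigma\geq\lambda_{r+1}\geq 0$ guarantees $\mu$ is a genuine nonnegative measure (hence $\mathcal M_k(\bar\y)\succeq 0$), and that the same inequality is what keeps the truncated SVD of $M(\A)-\bar U$ equal to $\mathcal P_k(\bar\y)$.
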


\begin{proof}
Let $\A=\sum_{i=1}^s\lambda_i\x_i^{\otimes 3}$ be an orthogonal decomposition.
Let
\[
u(\x):=\sum_{i=1}^r\sigma(\x_i^\mathsf{T}\x)^3.
\]
Let $\mu:=\sum_{i=1}^r(\lambda_i-\sigma)\delta_{\x_i}$ be the $r$-atomic measure and $\bar\y$ be the moment sequence defined by the measure $\mu$. Let $U$ be the corresponding matrix for the cubic polynomial $u(\x)$.
Since $\lambda_r-\sigma\geq\lambda_{r+1}$, we have that
$\Pp_k(\bar\y)$ gives a best rank-$r$ approximation of the matrix $M(\A)-U$.

Let
\[
w(\x):=\frac{3\sigma}{2}\x^\mathsf{T}\x.
\]
Let $\bar Z$ be the moment matrix defined by the following polynomial:
\begin{equation*}
z(\mathbf x):=\sigma-u(\mathbf x)+w(\mathbf x)(\|\mathbf x\|^2-1)
\end{equation*}
Obviously,
\[
z(\x_i)=0\ \text{for all }i=1,\dots,r,
\]
so the complementarity between $\bar Z$ and $\mathcal M_k(\bar \y)$ is fulfilled.
In the following, by Theorem~\ref{thm:optimality}, we only need to check that the polynomial $w(\x)$ satisfies the fact that the polynomial $z(\x)$
is a sum of squares of polynomials.

Applying an orthogonal transformation if necessary, we can assume without loss of generality that $\x_i=\mathbf e_i$ (the $i$-th column vector of the identity matrix) for all $i=1,\dots,r$. Let the resulting polynomial be $\hat z(\x)$.
We then have
\begin{align*}
\frac{\hat z(\mathbf x)}{\sigma}=1-\sum_{i=1}^rx_i^3+\frac{3}{2}(\x^\mathsf{T}\x)(\x^\mathsf{T}\x-1).
\end{align*}
Thus, it follows from \cite[Section~7.3 (in Supplementary)]{TS-15} that the polynomial $\frac{\hat z(\mathbf x)}{\sigma}$ and hence $z(\mathbf x)$ is a sum of squares.

By Theorem~\ref{thm:optimality}, $\bar\y$ is a global minimizer of \eqref{eq:non-opt-rank-reform}. The approximation error to the best rank-$r$ approximation $\y^*$ generated by $\sum_{i=1}^r\lambda_i\delta_{\x_i}$ is given by
\[
\|\A-\mathcal B\|^2=\|\A-\mathcal B^*\|^2+r\sigma^2.
\]
The conclusion then follows.
\end{proof}

Of course, a refinement as in the preceding analysis will give the global optimal solution in the scenario of Proposition~\ref{prop:general-sigma}.
But a generic tensor does not necessarily have an orthogonal decomposition \cite{L-12}, which implies that the linear term over $\sigma$ in \eqref{eq:app-rank-r} is probably essential.


\subsection{Optimality}\label{sec:duality}
In this section, we summarize the established results into certifications on best approximations and quasi-optimal approximations of a given tensor.
\begin{theorem}[Rank-$r$ Approximation]\label{thm:best}
Suppose that $\sigma\geq 0$ and $k=2$ are chosen in \eqref{eq:non-opt-rank-reform}.
If there exists a triplet $(\bar U,\bar W, \bar Z)$ such that the feasibility \eqref{eq:lagrange-dual-compact} is satisfied and a vector $\bar \y$ such that all  the optimality condition \eqref{eq:optimality}, $\operatorname{rank}(\mathcal M_{1}(\bar\y))=\operatorname{rank}(\mathcal M_2(\bar\y))\neq r+1$, and
\begin{equation*}
\operatorname{rank}(\mathcal P_k(\bar \y))\leq r
\end{equation*}
are satisfied,
then $\bar \y$ gives an optimal solution for \eqref{eq:non-opt-rank-reform} and
\begin{enumerate}
\item if $\sigma=0$, or $r=1$ and $\sigma<\rho(\A)$, then $\mathcal P_k(\bar \y)$ gives a best rank-$r$ approximation of $\A$;
\item if $\sigma\in (0, \frac{\tau(\A)\rho(\A)}{2r})$, $\operatorname{rank}(\mathcal M_{1}(\bar\y))\leq r$, and Assumption~\ref{assmp:well-condition} is satisfied, then $\mathcal P_k(\bar \y)$ gives a $2\sqrt{\frac{r}{\tau(\A)}}\bigg(\Big(1-\sqrt{\frac{\tau(\A)}{r}}\Big)\|\A\|+2\sigma\bigg)\sigma$-quasi-optimal rank-$r$ approximation of $\mathcal A$.
\end{enumerate}
\end{theorem}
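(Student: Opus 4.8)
The plan is to read this theorem as an assembly of the three certification and quality results already established, with the rank hypotheses acting as the switches that select which earlier result applies in each regime. First I would invoke the dual certification Theorem~\ref{thm:optimality} verbatim: its three hypotheses---dual feasibility of \eqref{eq:lagrange-dual-compact}, the optimality relations \eqref{eq:optimality}, and the rank bound $\operatorname{rank}(\mathcal P_k(\bar\y))\le r$---are exactly what is assumed here, so $\bar\y$ is immediately an optimal solution of the $k=2$ relaxation \eqref{eq:non-opt-rank-reform}. This settles the first assertion and furnishes the common starting point for both quality claims.

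For conclusion (1) in the regime $\sigma=0$, I would feed this optimal $\bar\y$ into Theorem~\ref{thm:sub-optimal}. The standing flatness hypothesis $\operatorname{rank}(\mathcal M_1(\bar\y))=\operatorname{rank}(\mathcal M_2(\bar\y))$ triggers the rank bound \eqref{eq:rank-moment}, forcing $\operatorname{rank}(\mathcal M_2(\bar\y))\le r+2$. The decisive role of the added hypothesis $\operatorname{rank}(\mathcal M_2(\bar\y))\neq r+1$ is to exclude the single case (part~2 of Theorem~\ref{thm:sub-optimal}) in which the relaxation is only quasi-optimal; what remains are precisely the subcases of part~1 ($\le r$, $=r+2$, or $r=1$), each of which certifies $\mathcal P_2(\bar\y)$ as a genuine best rank-$r$ approximation. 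For the other half of conclusion (1), namely $r=1$ with $0<\sigma<\rho(\A)$, I would instead apply Proposition~\ref{prop:app-sigma} to the optimal $\bar\y$; it gives $B=\mathcal P_k(\bar\y)=X(1,1)\,\x(\x^{\otimes2})^\mathsf{T}$ with $X(1,1)=\rho(\A)-\sigma$, so the direction $\x$ together with the rescaling $(X(1,1)+\sigma)\x^{\otimes3}=\rho(\A)\x^{\otimes3}$ is exactly the best rank-one approximant recovered from $\mathcal P_k(\bar\y)$.

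For conclusion (2), I would route the optimal $\bar\y$ through Proposition~\ref{prop:appr-rank-r}, whose hypotheses match the case-2 hypotheses term by term: the range $\sigma\in(0,\tfrac{\tau(\A)\rho(\A)}{2r})$, the well-conditioning Assumption~\ref{assmp:well-condition}, and (for $k=2$) the flatness-with-rank condition $\operatorname{rank}(\mathcal M_1(\bar\y))=\operatorname{rank}(\mathcal M_2(\bar\y))\le r$, which follows by combining the standing flatness hypothesis with the extra bound $\operatorname{rank}(\mathcal M_1(\bar\y))\le r$. The conclusion of Proposition~\ref{prop:appr-rank-r} is verbatim the quasi-optimality estimate with the stated $\alpha$, so the claim is immediate; here $r\ge2$ is implicit, since for $r=1$ one is always in the stronger regime of conclusion (1).

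The routine content is thus mostly bookkeeping, so I expect the only genuine obstacle to be the careful matching of rank hypotheses at the boundary cases: confirming that excluding $\operatorname{rank}=r+1$ in the $\sigma=0$ case is both necessary (part~2 of Theorem~\ref{thm:sub-optimal} yields only a $\rho(\A-\mathcal B)^2$ bound there) and sufficient to land in the exactness regime, and checking that the flatness identity at $k=2$ lets both propositions be invoked without a gap. A secondary point worth flagging is the mild looseness in the phrase ``$\mathcal P_k(\bar\y)$ gives a best rank-$r$ approximation'' for the $r=1$, $\sigma>0$ branch, where the certified optimum is the rescaled tensor $(X(1,1)+\sigma)\x^{\otimes3}$ rather than $\mathcal P_k(\bar\y)$ itself.
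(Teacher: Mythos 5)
Your proposal is correct and follows essentially the same route as the paper, whose proof is just the one-line assembly of Theorem~\ref{thm:optimality}, Theorem~\ref{thm:sub-optimal}, and Proposition~\ref{prop:appr-rank-r}; your case analysis matching the rank hypotheses to the branches of Theorem~\ref{thm:sub-optimal} is exactly the intended bookkeeping. You are in fact slightly more complete than the paper, which omits citing Proposition~\ref{prop:app-sigma} for the $r=1$, $0<\sigma<\rho(\A)$ branch, and your flagged caveat that the certified best rank-one approximant there is the rescaled tensor $(X(1,1)+\sigma)\x^{\otimes 3}$ rather than $\mathcal P_k(\bar\y)$ itself matches the paper's own remark following Theorem~\ref{thm:best-one}.
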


\begin{proof}
It follows from Theorem~\ref{thm:optimality}, Theorem~\ref{thm:sub-optimal}, and Proposition~\ref{prop:appr-rank-r}.
\end{proof}

The case of the best rank one approximation is more clear.
\begin{theorem}[Rank One Approximation]\label{thm:best-one}
Suppose that $\sigma<\rho(\A)$ is chosen in \eqref{eq:non-opt-rank-reform} and $r=1$.
If there exists a triplet $(\bar U,\bar W, \bar Z)$ such that the feasibility \eqref{eq:lagrange-dual-compact} is satisfied and a vector $\bar \y$ such that all the optimality condition \eqref{eq:optimality} are satisfied, and
$\operatorname{rank}(\mathcal P_k(\bar \y))\leq 1$,
then $\bar \y$ gives a best rank one approximation of \eqref{eq:best-r}.
\end{theorem}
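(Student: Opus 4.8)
The plan is to realize Theorem~\ref{thm:best-one} as an assembly of the dual certification Theorem~\ref{thm:optimality} and the rank-one analysis Proposition~\ref{prop:app-sigma}, together with a short direct argument covering the boundary case $\sigma=0$. First I would observe that the hypotheses are exactly those of Theorem~\ref{thm:optimality} specialized to $r=1$: dual feasibility of $(\bar U,\bar W,\bar Z)$ for \eqref{eq:lagrange-dual-min}, the optimality relations \eqref{eq:optimality}, and the rank bound $\operatorname{rank}(\mathcal P_k(\bar\y))\le 1$. Hence Theorem~\ref{thm:optimality} applies and $\bar\y$ is a global optimizer of the $k$-th relaxation \eqref{eq:non-opt-rank-reform}. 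Writing $\bar B:=\mathcal P_k(\bar\y)$ and $\bar X:=\mathcal M_k(\bar\y)$, the matrix $\bar B$ has rank at most one; since a third order symmetric tensor has rank one exactly when its flattening does, $\bar B=\lambda\x(\x^{\otimes 2})^\mathsf{T}$ corresponds to a rank-one tensor $\lambda\x^{\otimes 3}$ with $\lambda\ge 0$ and $\x\in\mathbb S^{n-1}$, the sign of $\lambda$ being absorbed into $\x$.

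Next I would split on the sign of $\sigma$. If $\sigma>0$, then $(\bar B,\bar\y,\bar X)$ is precisely an optimal solution of \eqref{eq:non-opt-rank-reform} with positive control parameter, so Proposition~\ref{prop:app-sigma} applies directly: since $\sigma<\rho(\A)$, the direction $\x$ extracted from $\bar\y$ together with the scalar $\bar X(1,1)+\sigma=\rho(\A)$ yields the best rank one approximation $(\bar X(1,1)+\sigma)\x^{\otimes 3}$, so $\bar\y$ gives a best rank one approximation in the constructive sense. If instead $\sigma=0$, I would argue directly from the relaxation: problem \eqref{eq:non-opt-rank-reform} with $\sigma=0$ is a relaxation of the best rank-one problem \eqref{eq:best-r}, its optimal value at $\bar\y$ equals $\tfrac12\|M(\A)-\bar B\|^2=\tfrac12\|\A-\lambda\x^{\otimes 3}\|^2$ by linearity of $M$ and Proposition~\ref{prop:inner}, and the rank-one tensor $\lambda\x^{\otimes 3}$ is feasible for \eqref{eq:best-r}. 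A relaxation whose optimum is attained at a point feasible for the original problem certifies that point as optimal for the original, so $\lambda\x^{\otimes 3}=\mathcal P_k(\bar\y)$ is a best rank one approximation.

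The essential content is therefore already carried by the cited results, and the only point requiring care is the coefficient shift in the case $\sigma>0$: the optimal block $\bar B=\bar X(1,1)\x^{\otimes 3}$ is itself not the best rank-one approximant, but the direction it determines, rescaled to spectral norm $\rho(\A)$, is. Making explicit that ``$\bar\y$ gives'' a best rank one approximation is understood in this constructive sense is the main interpretive subtlety. I do not expect a genuine obstacle, since the delicate estimate, namely $\bar X(1,1)\ge\lambda$ via the Schur-complement argument using $\mathcal L_k(\bar\y)=0$, and the first-order condition forcing $\lambda=\rho(\A)-\sigma$, is exactly what Proposition~\ref{prop:app-sigma} establishes; the remaining work is bookkeeping and verifying that no flatness hypothesis is needed, which holds because both Theorem~\ref{thm:optimality} and Proposition~\ref{prop:app-sigma} dispense with it when $r=1$.
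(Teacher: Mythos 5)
Your proposal is correct and follows essentially the same route as the paper's proof, which likewise combines the relaxation property of \eqref{eq:non-opt-rank-reform} with the fact that a third order symmetric tensor has rank at most one exactly when its flattening matrix does (so no flatness condition is needed) and then invokes Proposition~\ref{prop:app-sigma} for the coefficient shift when $\sigma>0$. Your explicit case split between $\sigma=0$ and $\sigma>0$, and your remark that $\bar B$ must be rescaled to $(\bar X(1,1)+\sigma)\x^{\otimes 3}$ rather than taken as the approximant itself, merely spell out what the paper's terse argument leaves implicit.
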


\begin{proof}
This follows from the fact that \eqref{eq:non-opt-rank-reform} is a relaxation of \eqref{eq:best-r} and $\B$ is a tensor of rank at most one if and only if the corresponding matrix $B$ has rank at most one \cite{L-12}. Thus, the flatness condition in Theorem~\ref{thm:best} is not needed in this case. The result follows from Proposition~\ref{prop:app-sigma}.
\end{proof}

Note that for the rank one case, if $\sigma>0$ in Theorems~\ref{thm:best} and \ref{thm:best-one}, a simple refinement as in Proposition~\ref{prop:app-sigma} to get a best approximant is necessary.

Actually, the vector $\bar\y$ in both Theorem~\ref{thm:best} and \ref{thm:best-one} need not be a solution to problem \eqref{eq:non-opt-rank-reform}. We can have such a moment sequence by other means or methods, and we can also check the optimality for it using these theorems. This is exactly Theorem~\ref{thm:best-intro} when a candidate tensor is available.

\section{Numerical Illustration}\label{sec:numerical}
In this section, we present some numerical examples to illustrate the 
usefulness
of the theoretical results presented so far.

The emphasis is put on certifying the global optimality for the best low rank tensor computed, which is achieved by solving the primal problem \eqref{eq:non-opt-rank-reform} and employing the dual problem \eqref{eq:lagrange-dual-min} to check this optimality.
Note that both problems \eqref{eq:non-opt-rank-reform} and \eqref{eq:lagrange-dual-min} are not easy to solve, the primal has a rank constraint and the dual has a nonsmooth objective function. The design of an efficient numerical algorithm for solving
both problems will be addressed in another paper.
We will apply existing methods for \eqref{eq:non-opt-rank-reform} in the current paper. For the sake of not lengthening the paper, we do not include the full details, but just give
a brief description of the implementation here.

The dual problem \eqref{eq:lagrange-dual-min} will not be solved in this paper. Thus, it may happen that the global optimal solution is found but it cannot be certified, e.g., see Table~\ref{table:exm-2}. Instead, we will apply a DCA framework together with sGS-ADMM
to solve a penalized version of the
primal problem \eqref{eq:non-opt-rank-reform} and use the generated dual multipliers to check the optimality of the dual problem \eqref{eq:lagrange-dual-min}.

\subsection{Algorithmic Rationale}\label{sec:algorithm}
In the following, for simplicity, we omit the subscript $k$ for the order of relaxation
in  \eqref{eq:non-opt-rank-reform}. In this section, $k=2$ is applied.
For problem \eqref{eq:non-opt-rank-reform}, we first rewrite it as
\begin{equation*}
\begin{array}{rl}
\min& \frac{1}{2}\|M(\A)-B\|^2+\delta_{\SS^n_+}(X)+\sigma\langle E_0,X\rangle
\\[3pt]
\text{s.t.}
&B-\mathcal P(\y)=0,\  X-\mathcal M(\y)=0,\  \mathcal L(\y)=0,\\
&\|B\|_*-\|B\|_{(r)}=0,
\end{array}
\end{equation*}
where $\|B\|_*$ is the \textit{nuclear norm} of $B$, i.e., the sum of all the singular values of $B$, and $\|B\|_{(r)}$ is the sum of the $r$ largest singular values of $B$, known as the \textit{Ky-Fan $r$-norm} \cite{HJ-85}.
Based on the above formulation, we apply a penalty approach by solving
\begin{equation}\label{eq:non-opt-rank-nu-pen}
\begin{array}{rl}
\min& \frac{1}{2}\|M(\A)-B\|^2+\delta_{\SS^n_+}(X)+\sigma\langle E_0,X\rangle+\rho(\|B\|_*-\|B\|_{(r)})\\[3pt]
\text{s.t.}
&B-\mathcal P(\y)=0,\  X-\mathcal M(\y)=0,\  \mathcal L(\y)=0,
\end{array}
\end{equation}
where $\rho>0$ is a penalty parameter for the constraint $\|B\|_*-\|B\|_{(r)}=0$. Note that $\|B\|_*-\|B\|_{(r)}\geq 0$ holds always.
We see that the objective function of \eqref{eq:non-opt-rank-nu-pen} is a \textit{difference of convex} (DC) function.
Then, a standard DC algorithm (DCA) is applied for problem \eqref{eq:non-opt-rank-nu-pen}. 
The important issue on the exactness of the penalty for \eqref{eq:non-opt-rank-nu-pen} will be addressed in another paper too. From the numerical experiments, it seems that
the exact penalty property holds,
at least for some classes of given data.

The DCA framework is as follows: at each iteration point $(B_k,X_k,\y_k)$, we linearize $\|B\|_{(r)}$ as
\[
\|B_k\|_{(r)}+\langle C_k,B\rangle
\]
for some $C_k\in\partial \|B_k\|_{(r)}$. Problem \eqref{eq:non-opt-rank-nu-pen} is then replaced by a DC linearization
\begin{equation}\label{eq:non-opt-rank-nu-linear}
\begin{array}{rl}
\min& \frac{1}{2}\|M(\A)-B\|^2+\sigma\langle E_0,X\rangle+\delta_{\SS^n_+}(X)+\rho(\|B\|_*-\langle C_k,B\rangle)\\[5pt]
\text{s.t.}
&B-\mathcal P(\y)=0,\  X-\mathcal M(\y)=0,\ \mathcal L(\y)=0.
\end{array}
\end{equation}
This procedure is a standard algorithm for DC programs, whose convergence theory is classical \cite{PL-97}.

Problem \eqref{eq:non-opt-rank-nu-linear} is a linearly constrained convex matrix optimization problem with a nonsmooth objective function. The variables can be grouped into two sets $\{B,\y\}$ and $\{X\}$. Corresponding to each set, the objective function has a nonsmooth part.
We apply the symmetric Gauss-Siedel alternating direction method of multipliers (sGS-ADMM) \cite{LST-16} to solve \eqref{eq:non-opt-rank-nu-linear} based on the grouping
of the above two sets of variables.

The problem \eqref{eq:non-opt-rank-nu-linear} is solved with the following optimality being satisfied
\begin{equation}\label{eq:primal-optimal}
\begin{array}{c}
0\preceq \sigma E_0-V\perp X\succeq 0,\\
C_k+\frac{1}{\rho}(M(\mathcal A)-U-B)\in\partial \|B\|_*,\\
\mathcal L^*(W)+\mathcal P^*(U)-\mathcal M^*(V)=0,\\
B-\mathcal P(\y)=0,\ X-\mathcal M(\y)=0,\ \text{and }\mathcal L(\y)=0
\end{array}
\end{equation}
for some multipliers $(U,V,W)$. Note that if \eqref{eq:primal-optimal} is satisfied, then for $(U,W,Z)$ with $Z:=\sigma E_0-V$, the feasibility for the dual problem \eqref{eq:lagrange-dual-min} is fulfilled.

We briefly outline the relationship between the target dual \eqref{eq:lagrange-dual-min} and the dual of \eqref{eq:non-opt-rank-nu-linear}, which is
\begin{equation}\label{eq:non-opt-rank-lin-dual}
\begin{array}{rl}
\max& -\frac{1}{2}\|\operatorname{T}_{\frac{1}{\rho}}(M(\mathcal A)-U+\rho C_k)\|^2
\\[3pt]
\text{s.t.}
&\mathcal M^*(Z)+\mathcal P^*(U)-\mathcal L^*(W)=\sigma\mathcal M^*(E_0),\\
& Z\succeq 0,
\end{array}
\end{equation}
where $\operatorname{T}_{\frac{1}{\rho}}$ is the matrix \textit{soft-threshold} operator defined as
\begin{equation*}
\operatorname{T}_{\tau}(A):=U\operatorname{diag}(\max(\mathbf d-{\tau}^{-1}\mathbf e,\mathbf 0))V^\mathsf{T},
\end{equation*}
where $U\operatorname{diag}(\mathbf d)V^\mathsf{T}=A$ is a singular value decomposition of $A$ with $\mathbf d$ being the vector of singular values.
We see that \eqref{eq:non-opt-rank-lin-dual} is an approximation to \eqref{eq:lagrange-dual-min}, and they are equivalent if $C_k$ is well-chosen.

\subsection{Illustrative Examples}\label{sec:examples}
All the tests were conducted on a Lenovo laptop with 128GB RAM  and 2.8GHz E-2276M CPU running 64bit Windows operation system. All codes were written in {\sc Matlab}. The default parameters are chosen as $\tau=1.5,\kappa=1,\gamma=0.001,\sigma=0.00001,\rho=1$, where $\tau$ is the steplength in sGS-ADMM, $\gamma$ is a proximal parameter, and $\kappa$ is the penalty parameter for the augmented Lagrangian function of \eqref{eq:non-opt-rank-nu-linear}.
In the examples, the \textit{duality gap} refers to the difference $\psi(B,X)-\phi(U)$ as defined respectively in
\eqref{eq:non-opt-rank-reform}
and
\eqref{eq:lagrange-dual-compact};
the \textit{feasibility} refers to the maximum of the primal feasibility and the dual feasibility violations; the \textit{psd residaul} refers to the violation of the first condition in \eqref{eq:primal-optimal}; and the \textit{rank residual} refers to the violation of the last condition in \eqref{eq:optimality}.

We see that when all the duality gap,  feasibility violation, psd residual and rank residual are small, and the rank of $B$ is bounded by $r$, then both the primal problem \eqref{eq:non-opt-rank-reform} and the dual problem \eqref{eq:lagrange-dual-min} are solved globally. If furthermore the flatness condition is satisfied or $\rk(B)=1$, then the original best rank-$r$ approximation problem is solved globally with good quality (cf.\ Theorem~\ref{thm:best} and Theorem~\ref{thm:best-one}).

\begin{example}\label{exm:o-1}
{\em
This example is taken from De Lathauwer, De Moor and Vandewalle \cite[Example~5]{De-De-Van:bes}. It is a tensor in $\SS^3(\mathbb R^2)$ with the independent elements being
\[
a_{111}=2,\ a_{112}=1,\ a_{122}=1,\ \text{and }a_{222}=1.
\]
The best rank one approximation computed is
\[
\lambda=3.2560\ \text{with }\x= (0.7981,    0.6025)^\mathsf{T},
\]
which is exactly the one given in \cite{De-De-Van:bes}. The global optimality is
certified with the duality gap $= 1.6\times 10^{-8}$, feasibility $=5.7\times 10^{-8}$, psd residual $= 1.6\times 10^{-8}$, rank residual $= 5.3\times 10^{-10}$, and the computed matrix $B$ having rank one. The approximation residual is $\|\mathcal A-\mathcal B\|=0.555$.
}
\end{example}

\begin{example}\label{exm:o-5}
{\em
This example tests a set of perturbed versions of Example~\ref{exm:o-1}. They are tensors in $\SS^3(\mathbb R^2)$ with the independent elements being
\[
a_{111}=2,\ a_{112}=1,\ a_{122}=1-\epsilon,\ \text{and }a_{222}=1+\epsilon,
\]
where $\epsilon>0$ is a perturbation in $[10^{-6},10^{-1}]$. We tested $100$ instances, each taking an $\epsilon\in [10^{-6},10^{-1}]$, starting from $10^{-6}$ with an equal difference $10^{-3}$. In each case, the method successfully computed the best rank one approximation, together with a global optimality certification as in Example~\ref{exm:o-1}. We do not present the similar but tedious data, while show the computed $\lambda$, and the coordinates of the vector $\x$ in Figure~\ref{fig:perturbation}, from which we can see the evolution of the optimal solutions along the perturbations.
}
\begin{figure*}
	\centering
\subfigure[The norm $\lambda$]{ \includegraphics[width=0.3\columnwidth]{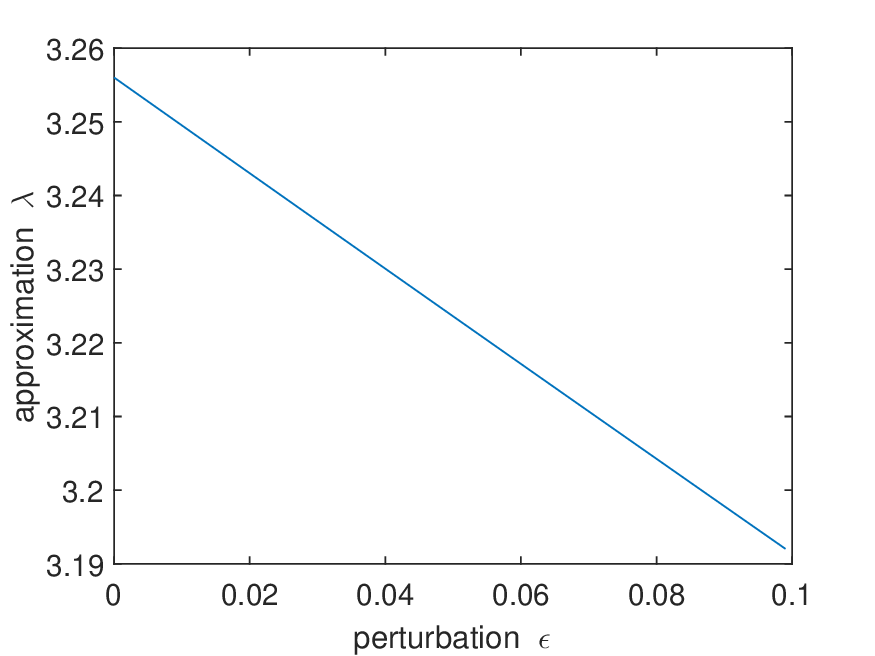}
}
\subfigure[The first coordinate $x_1$]{  \includegraphics[width=0.3\columnwidth]{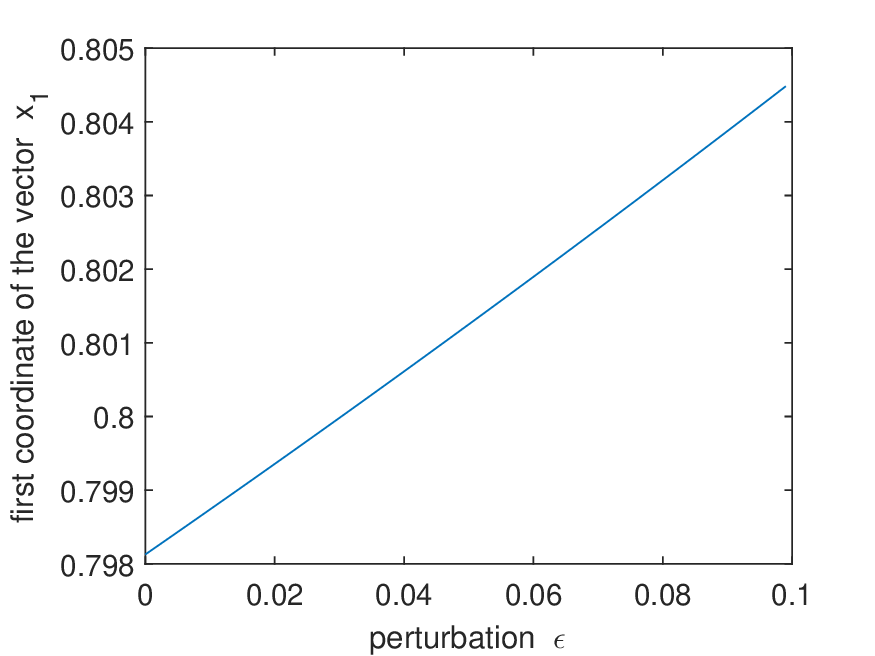}
}
\subfigure[The second coordinate $x_2$]{  \includegraphics[width=0.3\columnwidth]{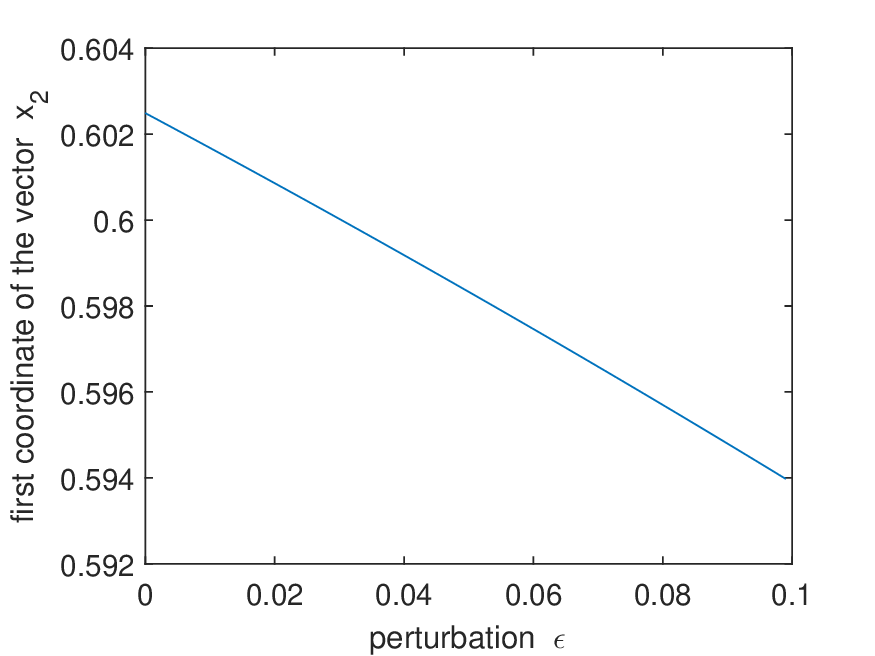}
}
	\caption{The computed best rank one tensors along the perturbations.}
	\label{fig:perturbation}
\end{figure*}
\end{example}
\begin{example}\label{exm:o-2}
{\em
This tensor is taken from Qi \cite[Example~2]{Qi:rat} as well as Nie and Wang \cite[Example~3.3]{NW-14}.
This is a tensor in $\SS^3(\mathbb R^3)$ with the independent elements being
\begin{align*}
&a_{111}=0.0517,\ a_{112}=0.3579,\ a_{113}=0.5298,\ a_{122}=0.7544,\ a_{123}=0.2156,
\\
&a_{133}=0.3612,\ a_{222}=0.3943,\ a_{223}=0.0146,\ a_{233}=0.6718,\ a_{333}=0.9723.
\end{align*}
The best rank one approximation computed is
\[
\lambda=2.1110\ \text{with }\x= (0.5204,    0.5113,    0.6839)^\mathsf{T},
\]
which is certified with the duality gap $=6.1\times 10^{-8}$, feasibility $=3.4\times 10^{-8}$, psd residual $=6.1\times 10^{-8}$, rank residual $=3.5\times 10^{-10}$,
 and the computed matrix $B$ having rank one.
The result agrees with that in \cite[Example~3.3]{NW-14}. The approximation residual is $1.0199$.
}
\end{example}

\begin{example}\label{exm:o-3}
{\em
This is a tensor in $\SS^3(\mathbb R^2)$ with the independent elements being
\[
a_{111}=0.1615 ,\ a_{112}=0.5603 ,\ a_{122}=-0.5824,\ \text{and }a_{222}= 1.2076.
\]
This tensor has rank three and has a rank decomposition given by
\[
0.6648\times \begin{bmatrix} -0.3314\\  0.5551\end{bmatrix}^{\otimes 3}+
    0.1132\times \begin{bmatrix} 0.6042\\  0.0891\end{bmatrix}^{\otimes 3}+
    0.0828\times \begin{bmatrix} 0.57064\\  0.4388\end{bmatrix}^{\otimes 3}.
\]
The computed best rank two approximation tensor $\mathcal B$ has independent elements
\[
b_{111}= 0.4771,\ b_{112}=0.5603,\ b_{122}=-0.4771,\ \text{and }b_{222}= 1.2076.
\]
It is a rank two tensor, and has a rank decomposition given as
\[
 1.6347\times \begin{bmatrix} -0.4514\\ 0.8923\end{bmatrix}^{\otimes 3}+
    0.8000\times \begin{bmatrix} 0.9222\\  0.3867\end{bmatrix}^{\otimes 3}.
\]
The approximation residual is $0.3327$. The duality gap is $4.2\times 10^{-8}$ with the dual objective function value $=0.0664$.
The feasibility $=2.5\times 10^{-8}$, psd residual $=4.4\times 10^{-8}$, and rank residual  $=2.7\times 10^{-8}$. The computed moment vector $\y$ is
\begin{align*}
\small
(&2.4345,   -0.0000,    1.7678,    1.0134,   -0.3730,    1.4211,    0.4771,
   0.5603,\\
   &  -0.4771,    1.2076,    0.6465,    0.1085,    0.3669,   -0.4815,    1.0542)^\mathsf{T},
\end{align*}
from which we can see that the (numerically) nonzero eigenvalues of the first moment matrix $\mathcal M_1(\y)$ are $1.0838,  3.7852$, and those of the second moment matrix $\mathcal M_2(\y)$ are $1.7691,     5.1674$. Therefore, the flatness condition is satisfied and hence the quantified optimality is certified.
}
\end{example}

\begin{example}\label{exm:o-4}
{\em
This is a tensor in $\SS^3(\mathbb R^3)$ with the independent elements being
\begin{align*}
&a_{111}=-0.5570,\ a_{112} =1.2276,\ a_{113}= 0.0843,\ a_{122}=    -0.1576,\ a_{123}=   -0.1005,\\
& a_{133}=  0.1446,\ a_{222}=    1.6399,\ a_{223}=     0.2158,\ a_{233}=    -0.3820,\ a_{333}=    1.0080.
\end{align*}
This tensor has rank three and has a rank decomposition given by
\[
0.3474\times \begin{bmatrix} 0.0973\\   -0.2534 \\   0.6619\end{bmatrix}^{\otimes 3}+
    0.6606\times \begin{bmatrix} 0.3489 \\   0.5181 \\   0.0087\end{bmatrix}^{\otimes 3}+
    0.3839\times \begin{bmatrix}  -0.6028 \\   0.5873 \\   0.0398\end{bmatrix}^{\otimes 3}.
\]
The computed best rank-two approximation tensor $\mathcal B$ has independent elements
\begin{align*}
&a_{111}=-0.0085,\ a_{112} =1.2274,\ a_{113}= 0.0965,\ a_{122}=    0.0178,\ a_{123}=   -0.0584,\\
&  a_{133}=  -0.0093,\ a_{222}=    1.6536,\ a_{223}=     0.1286,\ a_{233}=     0.0129,\ a_{333}=    0.0015.
\end{align*}
It is a rank two tensor, and has a rank decomposition given by
\[
 1.9223\times \begin{bmatrix} 0.6486 \\   0.7606 \\   0.0279\end{bmatrix}^{\otimes 3}+
    1.9063\times \begin{bmatrix} -0.6539  \\  0.7511 \\   0.0907\end{bmatrix}^{\otimes 3}.
\]
The approximation residual is $1.2560$. The duality gap $= 1.6\times 10^{-7}$ with the dual objective function value $=0.9895$.
The feasibility $= 3.6\times 10^{-8}$, psd residual $= 1.6\times 10^{-7}$, rank residual $= 9.5\times 10^{-11}$. The computed moment vector $\y$ is
\begin{align*}
\small
(&3.8286,    0.0000,    2.8939,    0.2265,    1.6239,    0.0118,   -0.0782,    2.1875,    0.1707,\\
  &  0.0172,   -0.0085,    1.2274,    0.0965,    0.0178,   -0.0584,   -0.0093,    1.6536,    0.1286,\\
   & 0.0129,    0.0015,    0.6888,   -0.0015,   -0.0337,    0.9277,    0.0727,    0.0073,    0.0202,\\
   &-0.0437,   -0.0070,   -0.0009,    1.2501,    0.0969,    0.0097,    0.0011,    0.0001)^\mathsf{T},
\end{align*}
from which we can see that the (numerically) nonzero eigenvalues of the first moment matrix $\mathcal M_1(\y)$ are $2.5635,    7.9775$, and those of the second moment matrix $\mathcal M_2(\y)$ are $1.6277, 6.0295$. Therefore, the flatness condition is satisfied and hence the quantified optimality is certified.
}
\end{example}

\begin{example}\label{exm:1}
{\em
This is a tensor $\A\in\SS^3(\mathbb R^2)$ with the independent elements being
\[
a_{111}=0.5662,\ a_{112} = -0.0971,\ a_{122} = 0.0713,\ \text{and }a_{222} = 0.2664.
\]
This tensor is an orthogonally decomposable tensor with rank two.
The computed best rank two approximation is given by
\begin{equation*}
\B=0.5945\times \begin{bmatrix} 0.9827\\ -0.1854\end{bmatrix}^{\otimes 3}+0.2849\times \begin{bmatrix} 0.1854\\ 0.9827\end{bmatrix}^{\otimes 3}
\end{equation*}
with the approximation residual being $\|\A-\B\|=1.1\times 10^{-4}$.
We see that $\B$ is an orthogonally decomposable tensor. The duality gap $=9.7\times 10^{-9}$, feasibility $=7.4\times 10^{-11}$, psd residual $=1.2\times 10^{-10}$, and rank residual $=1.3\times 10^{-4}$. The rank residual indicates that the DCA did not return an accurate solution in this case. But, the approximation quality is consistent with the theoretical bound given in Proposition~\ref{prop:general-sigma}.

We also tested its best rank one approximation. Ten simulations were drawn, nine of them find the global solution, and the rest finds a local minimizer.

The best rank one approximation computed is
\[
\B=0.5949\times\x^{\otimes 3}\ \text{with }\x= (0.9826, -0.1859)^\mathsf{T}
\]
with the approximation residual being $\|\A-\B\|=0.2848$.  The duality gap $=1.5\times 10^{-8}$, feasibility $=1.4\times 10^{-8}$, psd residual  $=1.5\times 10^{-8}$, and  rank residual  $=2.5\times 10^{-12}$. We see that the approximation quality is very good.

Since the given tensor is orthogonally decomposable, we know all the local minimizers \cite{HL-18}.
A local minimizer is
\[
\B=0.2848\times \x^{\otimes 3}\ \text{with }\x=(0.1861, 0.9825)^\mathsf{T}
\]
with the approximation residual being $\|\A-\B\|=0.5949$.  The duality gap $=0.1429$,  feasibility $=5.2\times 10^{-9}$,  psd residual $=2.0\times 10^{-9}$, rank residual $=0.3209$. Thus, it cannot be certified as a global optimizer by the theory established in this paper, which agrees with the observed fact.
}
\end{example}

\begin{example}\label{exm:2}
{\em
The tensors in this example are perturbed variations of the tensor in Example~\ref{exm:1}.
We test six variations, by adding to each component of the tensor in Example~\ref{exm:1} with $\epsilon=10^{-1},10^{-2},10^{-3},10^{-4},10^{-5}$ and $10^{-6}$ respectively.
For each case, we run  the algorithm $100$ times with random initialization. We record the norm of the best rank one approximation tensor found by the algorithm.
\begin{table}[h!]\caption{Performance of the perturbed tensor.}\label{table:exm-2}
\centering
\begin{tabular}{|c | c | c | c | c | c | c |}
\hline
$\epsilon$& \multicolumn{2}{c|}{$10^{-1}$}  & \multicolumn{2}{c|}{$10^{-2}$} &\multicolumn{2}{c|}{$10^{-3}$} \\\hline
$\|B\|$& 0.66604&0.66604&0.59994 & 0.30133  &0.59525 &0.28633 \\ \hline
\text{Num}& 82&18& 84&16 &77 &23  \\ \hline
\text{gap}& $1.6\times 10^{-8}$&0.032& $1.4\times 10^{-8}$&0.159&$1.5\times 10^{-8}$&0.138 \\ \hline \hline
$\epsilon$&\multicolumn{2}{c|}{$10^{-4}$} &\multicolumn{2}{c|}{$10^{-5}$}&\multicolumn{2}{c|}{$10^{-6}$}\\ \hline
$\|B\|$&0.59479 &0.28489 &0.59474 &0.28474 &0.59474 &0.28473\\\hline
\text{Num}&80 &20 &76 &24 &70 &30  \\ \hline
\text{gap}&$1.5\times 10^{-8}$&0.165&$1.6\times 10^{-8}$& 0.151&$1.5\times 10^{-8}$&0.149\\ \hline
\end{tabular}
\end{table}In all the cases, \textit{Num} represents the number of times that the algorithm finds the rank one tensor with the outlined norm \textit{$\|B\|$}, which is the global solution when we group it into the \textit{gap} of magnitude $10^{-8}$.
Note that when the perturbation is $0.1$, the resulting tensor can be considered as ``generic". It is seen that
in this case, global optimizers are found in all the simulations, but our algorithm cannot certify this fact in $18$ simulations. We think this is due to the fact that the dual problem \eqref{eq:lagrange-dual-min} is not solved with a guarantee in these cases. However, for perturbations
of smaller magnitudes, global solutions are found in around $80\%$ percentage of the simulations.
}
\end{example}

\begin{example}\label{exm:3}
{\em
The tensors in this example are randomly generated with each element in $[0,1]$. The best rank one approximation is computed, i.e., $r=1$. Examples with different dimension $n$ are simulated. For each $n\in\{2,\dots,10\}$, $100$ randomly generated instances are tested. We recorded the number of times that the algorithm successfully computed the best rank one approximation and certified this fact using the theory developed in this paper as well. We see from the table that the performance is quite promising.
\begin{table}[h!]\caption{Performance of randomly generated tensors.}\label{table:exm-3}
\centering\small
\begin{tabular}{|c | c | c | c| c   | c | }
\hline
$n$& \multicolumn{2}{c|}{2}& 3 &4 &5  \\\hline
\text{gap}& $5.490\times 10^{-8}$&0.391 &  $2.674\times 10^{-8}$&  $4.127\times 10^{-8}$&  $5.785\times 10^{-8}$\\ \hline
\text{Num}& 94&6& 100& 100& 100\\ \hline\hline
$n$&6&7&8&9&10\\ \hline
\text{gap}& $5.826\times 10^{-8}$&  $7.496\times 10^{-8}$&  $1.102\times 10^{-6}$&  $1.600\times 10^{-8}$&  $4.926\times 10^{-7}$\\ \hline
\text{Num}&  100 &100 &100&100&100\\ \hline
\end{tabular}
\end{table}
}
\end{example}

\section{Conclusions}\label{sec:conclusion}
In this paper, we presented a method for computing the best low rank approximation for a given third order symmetric tensor. It is shown that this method can certify the global optimality under mild assumptions by employing techniques from polynomial optimization, matrix optimization, duality theory, and nonsmooth analysis. The applicability of the theory is verified by several numerical examples.

The emphasis of this paper is on the global optimality and quantified quasi-optimality certification of the best low rank approximation. Numerical illustration is presented for the validation of the theory as well. However, more carefully and wisely designed numerical methods should be investigated in our future research for solving the hard optimization problems involved in the theory.
In particular, the method employed in this paper for the subproblem \eqref{eq:non-opt-rank-nu-linear} is a first order method, which
typically has a slow convergence and it is difficult to get a high accuracy solution. On the other hand, the polynomial optimization ingredients in our reformulation \eqref{eq:non-opt-rank-reform} require a high accuracy solution; otherwise, the flatness condition is impossible to be satisfied. Hence, methods for solving \eqref{eq:non-opt-rank-nu-linear} and \eqref{eq:non-opt-rank-nu-pen} with high accuracy and fast convergent properties should be developed.
In order to facilitate the global optimality certification, high accuracy solution for
the dual problem \eqref{eq:lagrange-dual-min} is also necessary.
But \eqref{eq:lagrange-dual-min} has a nonsmooth convex objective function, so
this is also a challenging problem to solve.

Nevertheless, the numerical examples in Section~\ref{sec:numerical} as well as the theoretical results in Section~\ref{sec:app-quality} on the global optimality certification convinced us that this approach is quite promising.

\backmatter


\bmhead{Acknowledgments}
Shenglong Hu is supported by the National Science Foundation of China under Grant 12171128 and the Natural Science Foundation of Zhejiang Province, China, under Grant LY22A010022. Defeng Sun is supported in part by RGC Senior Research Fellow Scheme (SRFS) under SRFS2223-5S02.  Kim-Chuan Toh is supported by the Ministry of Education, Singapore, under its Academic Research Fund Tier 3
grant call (MOE-2019-T3-1-010).

\begin{appendices}

\section{Indices and Moments}\label{sec:indices}
\subsection{Multisets of indices}\label{sec:matricization}
Let $\mathbb N$ be the set of natural numbers and $\mathbb N^n$ be the set of vectors of
dimension $n$ with components in $\mathbb N$.
Let $\mathbb I\in(\mathbb N^n)^n$ be defined as
\[
\mathbb I := (\e_1,\dots,\e_n).
\]
In the notation $\I$, the dependence on $n$ is omitted for simplicity as it should be clear
from the context.

We define $\mathbb I^1:=\mathbb I$ and $\mathbb I^0:=(\0)$, where $\0\in\mathbb N^n$ is the vector of all zeros.
For any positive integer $s>1$, $\mathbb I^s\in (\mathbb N^n)^{n^s}$ is defined as $\mathbb I*\mathbb I^{s-1}$, and
\[
\mathbb I*\mathbb I^{s-1}:=(\e_1*\mathbb I^{s-1},\dots,\e_n*\mathbb I^{s-1}),
\]
and if $\mathbb I^{s-1}$ is written as $(\uu_1,\dots,\uu_{n^{s-1}})$, then
\[
\e_i*\mathbb I^{s-1}:=(\e_i+\uu_1,\dots,\e_i+\uu_{n^{s-1}}).
\]

Note that when $s>1$, there are repeated elements in the ordered set $\mathbb I^s$.
Actually, it is easy to see that there are exactly ${n+s-1\choose n-1}$ distinct elements in $\I^s$.

For a nonnegative integer $s$, define
\[
\I^{\leq s}:=\I^0\oplus \I^1\oplus\dots\oplus \I^s.
\]
The cardinality of $\I^{\leq s}$ is denoted as
\begin{equation*}
\nu(n,s):=\frac{n^{s+1}-1}{n-1}.
\end{equation*}

We can view both $\I^s$ and $\I^{\leq s}$ as sets of vectors  in $\mathbb N^n$.  For notational simplicity, we will use the notation $\alpha\in \I^s$ or $\alpha\in \I^{\leq s}$
to refer to $\alpha$ as a vector in the set $\I^s$ or $\I^{\leq s}$ respectively.
\subsection{Moment matrices and extended moment matrices}\label{sec:moment-matrix}
Moment matrices are useful tools in the study of polynomial optimization, we refer to \cite{L-01,L-09,N-14,N-14-ATKMP,N-15,nie2023moment} and references therein for basic notions and advances of polynomial optimization.
Let
\begin{equation}\label{eq:monomial}
\x^s:=(1,x_1,\dots,x_n,x_1^2,\dots,x_n^s)^\mathsf{T}
\end{equation}
be the vector of monomials up to degree $s$ in the $n$ variables $x_1,\dots,x_n$ ordered lexicographically. The dimension of $\x^s$ is ${n+s\choose n}$.
Let
\[
\x^{[s]}=(x_1^s,x_1^{s-1}x_2,\dots,x_n^s)^\mathsf{T}
\]
be the sub-vector of $\x^s$ corresponding to the monomials of degree exactly $s$. The
dimension of $\x^{[s]}$ is
\begin{equation*}
\zeta(n,s):={n+s-1\choose n-1}.
\end{equation*}
Let
\begin{eqnarray*}
\mathbb N^n_{\leq s} &:=&\{\alpha\in\mathbb N^n\colon |\alpha|:=\alpha_1+\dots+\alpha_n\leq s\},
\\[3pt]
\mathbb N^n_{=s}&:=&\{\alpha\in\mathbb N^n\colon |\alpha|:=\alpha_1+\dots+\alpha_n=s\}.
\end{eqnarray*}

Note that for each given integer $s\geq 0$, there exists a set of mutually orthogonal symmetric matrices $A_\alpha\in\SS^2(\{0,1\}^{\zeta(n+1,s)})$ such that
\[
\x^s(\x^{s})^\mathsf{T}=\sum_{\alpha\in\mathbb N^n_{\leq 2s}}\x^\alpha A_\alpha.
\]

In the classical analysis of polynomial optimization, a moment matrix of order $s$ is a matrix $M\in\SS^2(\mathbb R^{\zeta(n+1,s)})$ in the form
\begin{equation}\label{eq:moment}
M:=\sum_{\alpha\in\mathbb N^n_{\leq 2s}}y_{\alpha} A_\alpha
\end{equation}
for a vector $\y\in\mathbb R^{\zeta(n+1,2s)}$ which is indexed by the vector of exponents of monomials in $\x^{2s}$. The matrix $M$ in \eqref{eq:moment} is denoted as $\M_s(\y)$,
and it is known as the \textit{$s$-th order moment matrix generated by $\y$}.

There is a natural one to one correspondence between a monomial $\x^\alpha$ and a vector in $\mathbb N^n$. The relation is indicated directly by the exponent vector $\alpha$ of the given monomial. Likewise, we can define
\begin{equation*}
\x^{\circ s}:=(\x^{\uu_1},\dots,\x^{\uu_{\nu(n,s)}})^\mathsf{T}
\end{equation*}
with
\[
\I^{\leq s}=:(\uu_1,\dots,\uu_{\nu(n,s)});
\]
and
\begin{equation}\label{eq:tensor-1}
\x^{\otimes s}:=(\x^{\vv_1},\dots,\x^{\vv_{n^s}})^\mathsf{T}
\end{equation}
with
\[
\I^s=:(\vv_1,\dots,\vv_{n^s}).
\]

Note that $\x^{\otimes s}$ is used to refer to the symmetric rank one tensor of order $s$ generated by $\x$ as well. We hope that this abuse of notation will not bring confusion, since \eqref{eq:tensor-1} is consistent with the classical meaning and it is actually a vectorization of the rank one tensor $\x^{\otimes s}$ in the lexicographic order. The exact meaning would be clear from the context.

It is thus easy to see that the vector  $\x^s$ is a sub-vector of $\x^{\circ s}$,
and $\x^{[s]}$ is a sub-vector of $\x^{\otimes s}$.

Likewise, for each given integer $s\geq 0$, there exists a set of mutually orthogonal symmetric matrices $B_\alpha\in \SS^2(\{0,1\}^{\nu(n,s)})$ such that
\[
\x^{\circ s}(\x^{\circ s})^\mathsf{T}=\sum_{\alpha\in\mathbb N^n_{\leq 2s}}\x^\alpha B_\alpha.
\]

An \textit{extended moment matrix}\footnote{The employment of extended moment matrices can be avoided if an appropriate weight is chosen. However, after comparing it with the current presentation, we find the current one is clearer.} of order $s$ is a matrix $G\in \SS^2(\mathbb R^{\nu(n,s)})$ of the form
\begin{equation}\label{eq:moment-extend}
G:=\sum_{\alpha\in\mathbb N^n_{\leq 2s}}y_{\alpha} B_\alpha
\end{equation}
for a vector $\y\in\mathbb R^{\zeta(n+1,2s)}$ which is indexed by the vector of exponents of monomials in $\x^{2s}$. The matrix $G$ in \eqref{eq:moment-extend} is denoted as $\G_s(\y)$, which is called the \textit{$s$-th order extended moment matrix generated by $\y$}.

It is easy to see that a moment matrix of order $s$ generated by $\y$ is a principal sub-matrix of the extended moment matrix of order $s$ generated by the same $\y$. Moreover, we have the following result.
\begin{lemma}\label{lem:moment}
Let nonnegative integer $s$ be given and $\y\in\mathbb R^{\zeta(n+1,2s)}$. Let $M_1\in\SS^2(\mathbb R^{\zeta(n+1,s)})$ and $M_2\in \SS^2(\mathbb R^{\nu(n,s)})$ be respectively the moment matrix of order $s$ and the extended moment matrix of order $s$ generated by $\y$. Then, $M_1$ is a principal sub-matrix of $M_2$ and there exists a nonsingular matrix $P\in \SS^2(\mathbb R^{\nu(n,s)})$ such that
\[
P^\mathsf{T}M_2P = \begin{bmatrix}M_1&0\\ 0&0\end{bmatrix}.
\]
Thus, $\rk(M_1)=\rk(M_2)$ and $M_1\succeq 0$ if and only if $M_2\succeq 0$.
\end{lemma}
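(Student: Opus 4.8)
The plan is to realize the extended moment matrix $M_2=\G_s(\y)$ as a congruence transform of the ordinary moment matrix $M_1$ by a $0$--$1$ expansion matrix, and then to read off both the rank identity and the definiteness equivalence from the invariance of rank and inertia under congruence. First I would make the relation between the two monomial vectors precise. Since $\I^{\le s}$ is a multiset in which every exponent $\alpha\in\mathbb N^n_{\le s}$ occurs with some multiplicity $\ge 1$ (the multinomial count), the extended monomial vector $\x^{\circ s}$ is obtained from $\x^s$ by duplicating entries, so there is a matrix $S\in\{0,1\}^{\nu(n,s)\times\zeta(n+1,s)}$ with exactly one $1$ per row such that $\x^{\circ s}=S\x^s$. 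Consequently
\[
\x^{\circ s}(\x^{\circ s})^\mathsf{T}=S\,\x^s(\x^s)^\mathsf{T}S^\mathsf{T}=\sum_{\alpha}\x^\alpha\,S A_\alpha S^\mathsf{T}.
\]
Matching the linearly independent monomials $\x^\alpha$ against the defining identity $\x^{\circ s}(\x^{\circ s})^\mathsf{T}=\sum_\alpha \x^\alpha B_\alpha$ yields $B_\alpha=S A_\alpha S^\mathsf{T}$ for every $\alpha$, and summing against $\y$ gives the key identity $M_2=S M_1 S^\mathsf{T}$.

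For the principal-submatrix claim I would use the fact (recorded in the excerpt) that $\x^s$ is a subvector of $\x^{\circ s}$: let $J$ index those positions, so $(\x^{\circ s})_J=\x^s$. Writing entries explicitly as $(M_2)_{pq}=y_{\beta_p+\beta_q}$, where $\beta_p$ is the exponent indexing the $p$-th entry of $\x^{\circ s}$, the restriction to $p,q\in J$ reproduces exactly $(M_1)_{p'q'}=y_{\alpha_{p'}+\alpha_{q'}}$. Hence $M_1=(M_2)_{J,J}$ is a principal submatrix of $M_2$.

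The block-diagonalization then follows from $S$ having full column rank. Because every $\alpha\in\mathbb N^n_{\le s}$ appears in $\I^{\le s}$, each column of $S$ is nonzero, and since each row carries a single $1$ the columns have pairwise disjoint supports; thus $\operatorname{rank}(S)=\zeta(n+1,s)$. I would complete the columns of $S$ to a nonsingular matrix $[\,S\mid S'\,]\in\mathbb R^{\nu(n,s)\times\nu(n,s)}$ and set $P:=[\,S\mid S'\,]^{-\mathsf{T}}$, which is nonsingular and satisfies $P^\mathsf{T}S=\begin{bmatrix}I\\0\end{bmatrix}$, so that
\[
P^\mathsf{T}M_2P=(P^\mathsf{T}S)\,M_1\,(P^\mathsf{T}S)^\mathsf{T}=\begin{bmatrix}M_1&0\\0&0\end{bmatrix}.
\]
Since congruence by a nonsingular matrix preserves both rank and inertia, this block form immediately delivers $\operatorname{rank}(M_2)=\operatorname{rank}(M_1)$ and $M_2\succeq 0$ if and only if $M_1\succeq 0$.

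The only genuinely delicate point is the identity $B_\alpha=S A_\alpha S^\mathsf{T}$: it rests on the mutual orthogonality, and hence uniqueness, of the coefficient matrices $\{A_\alpha\}$ and $\{B_\alpha\}$, which is what permits passing from the equality of the outer products $\x^{\circ s}(\x^{\circ s})^\mathsf{T}=S\,\x^s(\x^s)^\mathsf{T}S^\mathsf{T}$ to an equality of the individual monomial coefficients; everything else is routine linear algebra. If a symmetric $P$ is wanted to match the stated ambient space, one may instead permute $\x^{\circ s}$ to group equal entries and apply, blockwise, a Helmert-type orthogonal transformation together with a diagonal rescaling, but only the nonsingularity of $P$ is actually needed for the rank and definiteness conclusions.
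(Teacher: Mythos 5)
Your proof is correct and supplies exactly the routine verification that the paper compresses into ``follows straightforwardly from the definitions'': the duplication identity $\x^{\circ s}=S\x^s$ giving $B_\alpha=SA_\alpha S^{\mathsf{T}}$ and hence $M_2=SM_1S^{\mathsf{T}}$, completion of the full-column-rank $S$ to a nonsingular matrix, and invariance of rank and inertia under congruence. The only mismatch is cosmetic: the paper's requirement $P\in\SS^2(\mathbb R^{\nu(n,s)})$ is best read as an abuse of notation for a square matrix of size $\nu(n,s)$ (the one-line proof certainly does not construct a symmetric $P$), so your non-symmetric $P$ together with your closing caveat that only nonsingularity is used is entirely adequate.
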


\begin{proof}
The result follows straightforwardly from the definitions.
\end{proof}

In fact, there is a permutation matrix $L$ such that $LM_2L^\mathsf{T}$ is a \textit{flat extension} of $M_1$.

\subsection{Moment Tensors and Localizing Matrices}\label{app:moment}
Let $\mathbf y\in\mathbb R^{\mathbb N^m}$ be a moment sequence. The (infinite) \textit{moment matrix} $M(\mathbf y)$ is defined element-wisely as
\[
(M(\mathbf y))_{\alpha,\beta}:=y_{\alpha+\beta}.
\]
The \textit{moment matrix of order $k$} is then defined as the leading $|\mathbb N^m_{\leq k}|\times|\mathbb N^m_{\leq k}|$ principal sub-matrix of the moment matrix $M(\mathbf y)$.

Likewise, the \textit{moment tensor} $\mathcal T(\mathbf y)$ is defined element-wisely as
\[
\big(\mathcal T(\mathbf y)\big)_{\alpha,\beta,\gamma}:=y_{\alpha+\beta+\gamma}.
\]
The \textit{moment tensor of order $p, q, r$}, denoted as $\mathcal T_{p,q,r}(\mathbf y)$, is then defined as the leading $|\mathbb N^m_{\leq p}|\times|\mathbb N^m_{\leq q}|\times|\mathbb N^m_{\leq r}|$ principal sub-tensor of the moment tensor $\mathcal T(\mathbf y)$.

Given a polynomial $g(\mathbf x)\in\mathbb R[\mathbf x]_r$, the \textit{localizing matrix} $L^{k}_g(\y)$ of order $k$ is given by
\[
\mathbf p^\mathsf{T}L^{k}_g(\y)\mathbf p=\langle\mathbf y,p^2g\rangle\ \text{for all }p(\mathbf x)\in\mathbb R[\mathbf x]_k.
\]

\begin{proposition}[Localizing Matrix via Moment Tensor]\label{prop:moment-tensor-local}
For any given polynomial $g(\mathbf x)\in\mathbb R[\mathbf x]_r$, it always holds that
\[
L^{k}_g(\y)=\langle\mathcal T_{k,k,r}(\mathbf y),\mathbf g\rangle_{3:1}\in\mathbb R^{|\mathbb N^m_{\leq k}|\times|\mathbb N^m_{\leq k}|},
\]
where $\big(\langle\mathcal T_{k,k,r}(\mathbf y),\mathbf g\rangle_{3:1}\big)_{\alpha,\beta}:=\sum_{\gamma\in \mathbb N^m_{\leq r}}\big(\mathcal T_{k,k,r}(\mathbf y)\big)_{\alpha,\beta,\gamma}g_{\gamma}$ for all $\alpha,\beta$.
\end{proposition}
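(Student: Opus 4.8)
The plan is to verify the claimed identity by expanding both sides in the monomial coordinate basis and matching the induced quadratic forms. First I would write $g(\mathbf x)=\sum_{\gamma\in\mathbb N^m_{\leq r}}g_\gamma\mathbf x^\gamma$, and for an arbitrary $p(\mathbf x)\in\mathbb R[\mathbf x]_k$ represent it as $p(\mathbf x)=\sum_{\alpha\in\mathbb N^m_{\leq k}}p_\alpha\mathbf x^\alpha$, so that $\mathbf p=(p_\alpha)_{\alpha\in\mathbb N^m_{\leq k}}$ is precisely the coefficient vector appearing in the defining relation $\mathbf p^\mathsf{T}L^{k}_g(\mathbf y)\mathbf p=\langle\mathbf y,p^2g\rangle$ of the localizing matrix.

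Next I would compute the product $p(\mathbf x)^2 g(\mathbf x)=\sum_{\alpha,\beta\in\mathbb N^m_{\leq k}}\sum_{\gamma\in\mathbb N^m_{\leq r}}p_\alpha p_\beta g_\gamma\,\mathbf x^{\alpha+\beta+\gamma}$ and apply the Riesz functional $\langle\mathbf y,\cdot\rangle$ term by term, using that it sends the monomial $\mathbf x^\delta$ to $y_\delta$. This yields
\[
\langle\mathbf y,p^2g\rangle=\sum_{\alpha,\beta\in\mathbb N^m_{\leq k}}\sum_{\gamma\in\mathbb N^m_{\leq r}}p_\alpha p_\beta g_\gamma\,y_{\alpha+\beta+\gamma}.
\]
By the definition $(\mathcal T(\mathbf y))_{\alpha,\beta,\gamma}=y_{\alpha+\beta+\gamma}$, and since the indices range exactly over $\alpha,\beta\in\mathbb N^m_{\leq k}$ and $\gamma\in\mathbb N^m_{\leq r}$, each entry $y_{\alpha+\beta+\gamma}$ coincides with the corresponding entry of the principal sub-tensor $\mathcal T_{k,k,r}(\mathbf y)$. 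Collecting the inner sum over $\gamma$ as the contraction $\langle\mathcal T_{k,k,r}(\mathbf y),\mathbf g\rangle_{3:1}$, the right-hand side becomes $\mathbf p^\mathsf{T}\big(\langle\mathcal T_{k,k,r}(\mathbf y),\mathbf g\rangle_{3:1}\big)\mathbf p$. Comparing this with the defining relation of $L^{k}_g(\mathbf y)$ shows that the two matrices induce the same quadratic form for every coefficient vector $\mathbf p$.

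Finally I would pass from equality of quadratic forms to equality of matrices by polarization, noting that both matrices are symmetric: the contraction is symmetric in $(\alpha,\beta)$ because $(\mathcal T(\mathbf y))_{\alpha,\beta,\gamma}=y_{\alpha+\beta+\gamma}$ is manifestly invariant under swapping $\alpha$ and $\beta$, and $L^{k}_g(\mathbf y)$ is symmetric by construction. Hence $\mathbf p^\mathsf{T}L^{k}_g(\mathbf y)\mathbf p=\mathbf p^\mathsf{T}\big(\langle\mathcal T_{k,k,r}(\mathbf y),\mathbf g\rangle_{3:1}\big)\mathbf p$ for all $\mathbf p$ forces $L^{k}_g(\mathbf y)=\langle\mathcal T_{k,k,r}(\mathbf y),\mathbf g\rangle_{3:1}$. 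I do not anticipate a substantial obstacle here, since the statement is essentially a rewriting of the localizing-matrix definition once the moment tensor is unwound into its scalar entries; the only point deserving a line of care is confirming the symmetry of the contracted tensor so that the polarization step is legitimate.
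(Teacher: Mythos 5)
Your proposal is correct and follows essentially the same route as the paper: the paper's proof is exactly the one-line expansion $\langle\mathbf y,p^2g\rangle=\sum_{\alpha,\beta,\gamma}y_{\alpha+\beta+\gamma}p_{\alpha}p_{\beta}g_{\gamma}=\langle\mathcal T_{k,k,r}(\mathbf y),\mathbf{p}\otimes\mathbf{p}\otimes\mathbf{g}\rangle$, which is your steps through the contraction identity. Your added polarization argument, justified by the symmetry of $y_{\alpha+\beta+\gamma}$ in $(\alpha,\beta)$, merely makes explicit the passage from equal quadratic forms to equal symmetric matrices that the paper leaves implicit.
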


\begin{proof}
It follows that
\[
\langle\mathbf y,p^2g\rangle=\sum_{\alpha,\beta,\gamma}y_{\alpha+\beta+\gamma}p_{\alpha}p_{\beta}g_{\gamma}=\langle\mathcal T_{k,k,r}(\mathbf y),\mathbf{p}\otimes\mathbf{p}\otimes\mathbf{g}\rangle.
\]
The result thus follows.
\end{proof}

\subsection{Flatness}\label{sec:flatness}
In this subsection, we review basic facts about flatness of truncated moment sequence over the unit sphere $\mathbb S^{n-1}$ (abbreviated as \textit{utms}). For $k\geq 2$, a utms $\y\in\mathbb R^{\zeta(n+1,2k)}$ is \textit{flat} if (cf.\ \cite{N-14-ATKMP})
\begin{equation}\label{eq:flatness}
\mathcal M_k(\y)\succeq 0,\ \mathcal L_k(\y)=0,\ \text{and }\operatorname{rank}(\mathcal M_k(\mathbf y))=\operatorname{rank}(\mathcal M_{k-1}(\mathbf y)),
\end{equation}
where $\mathcal L_k(\y):=L^{k-1}_{1-\x^\mathsf{T}\x}(\y)$ is the $(k-1)$-th localizing matrix of the polynomial $1-\mathbf x^\mathsf{T}\mathbf x$. We refer to Appendix~\ref{app:moment} for the notion of localizing matrices.

To be more precise, the condition \eqref{eq:flatness} is called \textit{the $k$-th flatness condition} for the utms. If $\y$ satisfies the $k$-th flatness condition, then $\y$ can be represented as a unique measure which is $\operatorname{rank}(\mathcal M_k(\mathbf y))$-atomic \cite{N-14-ATKMP,Curto-Fialkow:tkm}. We will call the cardinality of the support of this unique measure the \textit{rank of the utms}, denoted as $\operatorname{rank}(\y)$. Thus, in this case, $\operatorname{rank}(\y)=\operatorname{rank}(\mathcal M_k(\mathbf y))$.
If $\y$ does not satisfy the $k$-th flatness condition but some extension $\z$ of $\y$ satisfies the $s$-th flatness condition with $s>k$, then $\y$ can also be represented as a unique measure which is $\operatorname{rank}(\mathcal M_s(\mathbf z))$-atomic, and $\rk(\y):=\rk(\z)$. Since $\mathcal M_k(\mathbf y)$ is a principal sub-matrix of $\mathcal M_s(\mathbf z)$, {it may happen that $\operatorname{rank}(\y)=\operatorname{rank}(\z)=\operatorname{rank}(\mathcal M_s(\mathbf z))>\operatorname{rank}(\mathcal M_k(\mathbf y))$. } 
\section{Nonsmooth Analysis of Matrix Low Rank Projection}\label{app:low-rank}

Let positive integers $m\leq n$.
Given a matrix $X\in\mathbb R^{m\times n}$ and a positive integer $r\leq m$, we consider the following problem on projection of $X$ onto the set $\operatorname{R}(r)$ of matrices of rank at most $r$ in the ambient space $\mathbb R^{m\times n}$, i.e.,
\begin{equation}\label{app-eq:low-rank}
\begin{array}{rl}\min&\frac{1}{2}\|Y-X\|^2\\ \text{s.t.}& \operatorname{rank}(Y)\leq r,\\
&Y\in \mathbb R^{m\times n}.\end{array}
\end{equation}
It is well-known that an optimizer of \eqref{app-eq:low-rank} can be computed via \textit{singular value decomposition} by Eckart-Young-Mirsky's theorem \cite{GV-12}.
Actually,
let
\[
X=P\Sigma(X)Q^\mathsf{T}
\]
be the singular value decomposition of $X$ with an orthogonal matrix $P\in\mathbb R^{m\times m}$ and an orthonormal $Q\in\mathbb R^{n\times m}$, and a diagonal matrix $\Sigma=\operatorname{diag}\{\sigma_1,\dots,\sigma_m\}$ with the singular values being ordered in nonincreasing order. In the sequel, we follow \cite{gao2010structured,GS-10} for the nonsmooth analysis of the matrix low rank projection.
We can partition the index set as
\[
\alpha:=\{i\colon \sigma_i>\sigma_r\},\ \beta:=\{i\colon \sigma_i=\sigma_r\}\ \text{and }\gamma:=\{i\colon \sigma_i<\sigma_r\}.
\]
Let $\Pi_{\operatorname{R}(r)}(X)$ be the set of optimizers of problem \eqref{app-eq:low-rank}. In generic case, the set $\Pi_{\operatorname{R}(r)}(X)$ is a singleton, while in some cases, it is a smooth manifold of dimension greater than one.
Nevertheless, each optimizer of \eqref{app-eq:low-rank} can be written as
\begin{equation*}
\begin{bmatrix}P_\alpha&P_\beta U_\beta\end{bmatrix}\operatorname{diag}(\vv)\begin{bmatrix}Q_\alpha&Q_\beta U_\beta\end{bmatrix}^\mathsf{T}
\end{equation*}
with an orthogonal matrix $U_\beta\in \mathbb O(|\beta|)$ and $\vv\in V$ with
\begin{equation}\label{eq:set-v}
\small
V:=\Bigg\{\vv\in\mathbb R^{|\alpha|+|\beta|}\colon v_i=\begin{cases}\sigma_i& \begin{array}{c}\text{for all }i\in\alpha\cup\beta^*\ \text{with }\beta^*\subseteq\beta\ \\ \text{and }|\beta^*|=r-|\alpha|,\end{array}\\
0& \text{for the others}\end{cases}\Bigg\}.
\end{equation}

It is a direct calculation to check that
\[
\|X-Y\|^2=\sum_{i=r+1}^m\sigma_i^2
\]
is a constant for all $Y\in \Pi_{\operatorname{R}(r)}(X)$, if it is not a singleton. Thus, we will (in some sense abuse of notation) use
\[
\|X-\Pi_{\operatorname{R}(r)}(X)\|^2
\]
to denote the above constant. Similar convention is taken in some other situation as well.

Let
\begin{equation*}
\Theta_r(X):=\frac{1}{2}\|\Pi_{\operatorname{R}(r)}(X)\|^2.
\end{equation*}
We then have
\begin{align*}
\Theta_r(X)&=\frac{1}{2}\|X\|^2-\frac{1}{2}\|X-\Pi_{\operatorname{R}(r)}(X)\|^2\\
&=\frac{1}{2}\|X\|^2-\min_{Y\in\operatorname{R}(r)}\frac{1}{2}\|X-Y\|^2\\
&=\max_{Y\in\operatorname{R}(r)}\Big\{\frac{1}{2}\|X\|^2-\frac{1}{2}\|X-Y\|^2\Big\}\\
&=\max_{Y\in\operatorname{R}(r)}\big\{\langle X,Y\rangle-\frac{1}{2}\|Y\|^2\big\},
\end{align*}
which shows that $\Theta_r$ is a convex function.

As a convex function, we can compute its subdifferentials \cite{R-70}. Given a subset $S$, $\operatorname{conv}(S)$ denotes its convex hull in the ambient space. The next result follows from \cite[Proposition~2.16]{gao2010structured}.
\begin{lemma}\label{lem:subdiff}
We have
\begin{equation}\label{eq:subdiff}
\partial\Theta_r(X)=\operatorname{conv}\big(\Pi_{\operatorname{R}(r)}(X)\big).
\end{equation}
\end{lemma}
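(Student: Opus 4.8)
The plan is to exploit the variational representation of $\Theta_r$ that has already been derived, namely
\[
\Theta_r(X)=\sup_{Y\in\operatorname{R}(r)}\Big\{\langle X,Y\rangle-\tfrac{1}{2}\|Y\|^2\Big\},
\]
and to read off the subdifferential from it. For each fixed $Y\in\operatorname{R}(r)$ the map $X\mapsto\langle X,Y\rangle-\tfrac12\|Y\|^2$ is affine with constant gradient $Y$, so $\Theta_r$ is a pointwise supremum of affine functions; this re-confirms convexity and shows that the matrices entering the subdifferential calculus are precisely the competitors $Y$. The first observation I would record is that the supremum above is attained exactly on $\Pi_{\operatorname{R}(r)}(X)$: from the identity $\langle X,Y\rangle-\tfrac12\|Y\|^2=\tfrac12\|X\|^2-\tfrac12\|X-Y\|^2$, maximizing the left-hand side over $\operatorname{R}(r)$ is the same as minimizing $\|X-Y\|^2$ over $\operatorname{R}(r)$, whose solution set is by definition $\Pi_{\operatorname{R}(r)}(X)$.

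For the inclusion $\operatorname{conv}(\Pi_{\operatorname{R}(r)}(X))\subseteq\partial\Theta_r(X)$ I would argue directly. Fix $Y^\ast\in\Pi_{\operatorname{R}(r)}(X)$; since $Y^\ast$ attains the supremum at $X$ we have $\Theta_r(X)=\langle X,Y^\ast\rangle-\tfrac12\|Y^\ast\|^2$, while for an arbitrary $X'$ the supremum gives $\Theta_r(X')\ge\langle X',Y^\ast\rangle-\tfrac12\|Y^\ast\|^2$. Subtracting yields $\Theta_r(X')\ge\Theta_r(X)+\langle Y^\ast,X'-X\rangle$ for every $X'$, i.e.\ $Y^\ast\in\partial\Theta_r(X)$. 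Because $\partial\Theta_r(X)$ is convex, taking convex combinations gives the claimed inclusion.

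The hard part is the reverse inclusion $\partial\Theta_r(X)\subseteq\operatorname{conv}(\Pi_{\operatorname{R}(r)}(X))$, and here I would invoke the standard subdifferential formula for a pointwise supremum of convex functions (the Valadier/Ioffe--Tikhomirov theorem), which identifies $\partial\Theta_r(X)$ with the convex hull of the gradients $Y$ associated to the active maximizing indices, that is, with $\Pi_{\operatorname{R}(r)}(X)$. The technical obstacle is that $\operatorname{R}(r)$ is neither compact nor convex, so the supremum formula cannot be applied blindly; the key point to verify is that the effective maximizer set $\Pi_{\operatorname{R}(r)}(X)$ is compact. This is guaranteed by the explicit description in Appendix~\ref{app:low-rank}: every element of $\Pi_{\operatorname{R}(r)}(X)$ is parametrized by the compact orthogonal group $\mathbb O(|\beta|)$ together with the finite set $V$ of \eqref{eq:set-v}, so $\Pi_{\operatorname{R}(r)}(X)$ is closed and bounded and its convex hull is already closed. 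With compactness in hand, together with the continuity of $Y\mapsto\langle X,Y\rangle-\tfrac12\|Y\|^2$ and the resulting outer semicontinuity of the argmax mapping, the supremum subdifferential theorem applies and yields equality in \eqref{eq:subdiff} with no extra closure operation. Equivalently, and more economically, this is exactly the content of \cite[Proposition~2.16]{gao2010structured}, which I would cite to close the argument.
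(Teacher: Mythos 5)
Your proposal is correct in substance, but it takes a genuinely different route from the paper: the paper's entire proof of Lemma~\ref{lem:subdiff} is a citation to \cite[Proposition~2.16]{gao2010structured}, whereas you supply a self-contained argument via the subdifferential calculus of a pointwise supremum. Your easy inclusion $\operatorname{conv}(\Pi_{\operatorname{R}(r)}(X))\subseteq\partial\Theta_r(X)$ is clean and complete: attainment of the supremum on $\Pi_{\operatorname{R}(r)}(X)$ follows from Eckart--Young, and the subgradient inequality drops out of the affine minorants exactly as you write. For the reverse inclusion, you correctly flag the obstacle that $\operatorname{R}(r)$ is neither compact nor convex, but your stated remedy is slightly off target: compactness of the argmax set $\Pi_{\operatorname{R}(r)}(X)$ alone is not a hypothesis under which the Ioffe--Tikhomirov/Valadier theorem applies, since that theorem requires compactness of the \emph{index} set. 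The routine repair is to restrict the index set locally: any maximizer for a point $X'$ is a truncated SVD of $X'$ and hence satisfies $\|Y\|\leq\|X'\|$, so for $\|X'\|\leq M$ the supremum over $\operatorname{R}(r)$ equals the supremum over the compact set $\operatorname{R}(r)\cap\{\|Y\|\leq M\}$ (closedness of $\operatorname{R}(r)$ is what makes this set compact). Two convex functions agreeing on a neighborhood of $X$ have the same subdifferential at $X$, so the supremum theorem applied to the restricted function yields $\partial\Theta_r(X)=\overline{\operatorname{conv}}\big(\Pi_{\operatorname{R}(r)}(X)\big)$; here your observation that $\Pi_{\operatorname{R}(r)}(X)$ is compact (being the continuous image of $\mathbb O(|\beta|)$ times the finite set $V$ of \eqref{eq:set-v}) is precisely what removes the closure, since the convex hull of a compact set in finite dimensions is compact. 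With that one step tightened, your argument is complete and has the merit of being verifiable within the paper, while the paper's citation buys brevity at the cost of sending the reader to an external conjugacy-based proof.
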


\begin{lemma}\label{lem:low-rank}
Given a matrix $X\in\mathbb R^{m\times n}$ and positive integer $r\leq m$. If $Y\in\operatorname{conv}(\Pi_{\operatorname{R}(r)}(X))$, then
\begin{equation*}
Y\in \Pi_{\operatorname{R}(r)}(X)\ \text{if and only if }\operatorname{rank}(Y)\leq r.
\end{equation*}
\end{lemma}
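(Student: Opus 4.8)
The plan is to prove the two implications separately; the forward one is immediate and the reverse one carries the content. For the ``only if'' direction, if $Y\in\Pi_{\operatorname{R}(r)}(X)$ then $Y\in\operatorname{R}(r)$ by the definition of the projection, so $\operatorname{rank}(Y)\le r$ and there is nothing to do. All the work is in the ``if'' direction: given $Y\in\operatorname{conv}(\Pi_{\operatorname{R}(r)}(X))$ with $\operatorname{rank}(Y)\le r$, I want to conclude that $Y$ is itself a projection point.

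First I would record the two facts from Appendix~\ref{app:low-rank} that drive the argument. The first is the variational identity
\[
\Theta_r(X)=\frac12\|\Pi_{\operatorname{R}(r)}(X)\|^2=\max_{Z\in\operatorname{R}(r)}\Big\{\langle X,Z\rangle-\tfrac12\|Z\|^2\Big\},
\]
in which $\Pi_{\operatorname{R}(r)}(X)$ is precisely the set of maximizers, because $\langle X,Z\rangle-\frac12\|Z\|^2=\frac12\|X\|^2-\frac12\|X-Z\|^2$, so maximizing the right-hand side is the same as the defining minimization of $\|X-Z\|^2$. The second is that all elements of $\Pi_{\operatorname{R}(r)}(X)$ share a common Frobenius norm $c:=\|\Pi_{\operatorname{R}(r)}(X)\|$. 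Setting $V^*:=\Theta_r(X)$, every $W\in\Pi_{\operatorname{R}(r)}(X)$ then satisfies $\langle X,W\rangle-\frac12\|W\|^2=V^*$ and $\|W\|^2=c^2$, hence $\langle X,W\rangle=V^*+\frac12c^2$.

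The core step is a short convexity computation. By Carath\'eodory I may write $Y=\sum_k t_kY_k$ as a finite convex combination with $Y_k\in\Pi_{\operatorname{R}(r)}(X)$, $t_k>0$ and $\sum_k t_k=1$. Linearity of the inner product gives $\langle X,Y\rangle=\sum_k t_k\langle X,Y_k\rangle=V^*+\frac12c^2$, while convexity of the squared Frobenius norm gives $\|Y\|^2\le\sum_k t_k\|Y_k\|^2=c^2$. Since $\operatorname{rank}(Y)\le r$ means $Y\in\operatorname{R}(r)$, the maximum defining $V^*$ forces
\[
V^*\;\ge\;\langle X,Y\rangle-\tfrac12\|Y\|^2\;=\;V^*+\tfrac12c^2-\tfrac12\|Y\|^2,
\]
i.e.\ $\|Y\|^2\ge c^2$. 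Combining the two bounds yields $\|Y\|^2=c^2$, whence $\langle X,Y\rangle-\frac12\|Y\|^2=V^*$; thus $Y$ attains the maximum over $\operatorname{R}(r)$, and therefore $Y\in\Pi_{\operatorname{R}(r)}(X)$.

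The only substantive input is that all projection points carry the same norm, and I expect this — rather than the convexity bookkeeping — to be the real crux, since it is where the singular-value structure of Appendix~\ref{app:low-rank} enters: from the explicit parametrization each optimizer equals a fixed leading part plus $\sigma_r P_\beta\Phi Q_\beta^\mathsf{T}$ with $\Phi$ a rank-$(r-|\alpha|)$ orthogonal projector, so its squared norm is $\sum_{i=1}^r\sigma_i^2$ for every optimizer. A reader preferring a self-contained route could argue directly from this parametrization instead: every $Y\in\operatorname{conv}(\Pi_{\operatorname{R}(r)}(X))$ takes the form (fixed part) $+\,\sigma_r P_\beta\Psi Q_\beta^\mathsf{T}$ with $0\preceq\Psi\preceq I$ and $\operatorname{tr}\Psi=r-|\alpha|$, and then $\operatorname{rank}(Y)\le r$ forces the eigenvalues of $\Psi$ into $\{0,1\}$, i.e.\ $\Psi$ a projector, recovering the same conclusion.
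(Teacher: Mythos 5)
Your proof is correct, and it takes a genuinely different route from the paper's. The paper argues structurally: fixing an SVD of $X$, it writes each optimizer as a fixed $\alpha$-block plus $\sigma_r U_{\beta^*}U_{\beta^*}^{\mathsf{T}}$ in the $\beta$-block, expands $Y$ by Carath\'eodory, uses $\operatorname{rank}(Y)\leq r$ to force the convex combination $\sum_{s}\mu_s U^s_{(\beta^*)_s}(U^s_{(\beta^*)_s})^{\mathsf{T}}$ of rank-$p$ projectors to have rank exactly $p$, and then shows by a kernel-dimension argument that all these projectors coincide, so the combination is trivial and $Y$ is itself an optimizer. You instead run a purely variational argument on the identity $\Theta_r(X)=\max_{Z\in\operatorname{R}(r)}\{\langle X,Z\rangle-\frac{1}{2}\|Z\|^2\}$: every projection point attains $V^*$ and carries the common norm $c$, so linearity pins $\langle X,Y\rangle=V^*+\frac{1}{2}c^2$ on the hull while convexity gives $\|Y\|^2\leq c^2$, and the rank hypothesis readmits $Y$ into the feasible set, yielding the reverse inequality $\|Y\|^2\geq c^2$ and hence optimality of $Y$. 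Both of your inputs are available in the paper with no circularity: the max identity appears verbatim in the derivation of convexity of $\Theta_r$ in Appendix~\ref{app:low-rank}, and the common-norm fact is recorded in Section~\ref{sec:dual-problem-lag} and follows from the parametrization \eqref{eq:set-v}. What your route buys: it is shorter, skips all SVD bookkeeping, and in fact uses only that $\operatorname{R}(r)$ is a closed cone (for any closed cone, minimizing $t\mapsto\|X-tW\|^2$ at $t=1$ gives $\langle X,W\rangle=\|W\|^2$ at every projection point, so constancy of the optimal distance forces constancy of $\|W\|$), so it proves the symmetric analogue Proposition~\ref{lem:low-rank-sym} verbatim; moreover, applying strict convexity of $\|\cdot\|^2$ to your equality $\|Y\|^2=c^2$ recovers the paper's stronger by-product that the convex combination must be trivial (all $Y_k$ equal), i.e., the extreme-point remark following the paper's proof. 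What the paper's route buys is the explicit geometry of $\operatorname{conv}(\Pi_{\operatorname{R}(r)}(X))$; your closing alternative sketch, forcing a middle block $\Psi$ with $0\preceq\Psi\preceq I$ and $\operatorname{tr}\Psi=r-|\alpha|$ to be a projector via the rank bound, is essentially the paper's own argument in streamlined eigenvalue form, and is likewise correct modulo the trivial case $\sigma_r=0$, which the paper also sets aside.
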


\begin{proof}
Let
\[
X=P\Sigma(X)Q^\mathsf{T}
\]
be the singular value decomposition of $X$ with $\Sigma=\operatorname{diag}\{\sigma_1,\dots,\sigma_m\}$ consisting of nonincreasingly ordered singular values.
We can partition the index set as
\[
\alpha:=\{i\colon \sigma_i>\sigma_r\},\ \beta:=\{i\colon \sigma_i=\sigma_r\}\ \text{and }\gamma:=\{i\colon \sigma_i<\sigma_r\}.
\]
Each matrix $Z\in\Pi_{\operatorname{R}(r)}(X)$ takes the following form
\[
\begin{bmatrix}P_\alpha&P_\beta U_\beta\end{bmatrix}\operatorname{diag}(\vv)\begin{bmatrix}Q_\alpha&Q_\beta U_\beta\end{bmatrix}^\mathsf{T}
\]
with
\[
\vv\in\Bigg\{\vv\in\mathbb R^{|\alpha|+|\beta|}\colon v_i=\begin{cases}\sigma_i& \text{for all }i\in\alpha\cup\beta^*\ \text{with }\beta^*\subseteq\beta\ \text{and }|\beta^*|=r-|\alpha|,\\
0& \text{for the others}\end{cases}\Bigg\}.
\]

More concretely, it can be written as
\[
P^\mathsf{T}Z\tilde Q=\begin{bmatrix}\operatorname{diag}(\Sigma_\alpha)&0&0\\0&\sigma_rU_{\beta^*}U_{\beta^*}^\mathsf{T}&0\\0&0&0\end{bmatrix},
\]
where $U_{\beta^*}\in\mathbb R^{|\beta|\times |\beta^*|}$ is formed by the columns
of $U_{\beta}$
indexed by $\beta^*$, and $\tilde Q$ is an orthogonal matrix formed as $[Q\ \bar Q]$.
Let $Y\in \operatorname{conv}(\Pi_{\operatorname{R}(r)}(X))$. By Carath\'eodory's theorem \cite{R-70}, we can write
\[
Y=\sum_{s=1}^S\mu_sZ_s
\]
as a convex combination of $Z_s\in \Pi_{\operatorname{R}(r)}(X)$. We thus have
\begin{equation}\label{eq:app-decomp}
P^\mathsf{T}Y\tilde Q=\begin{bmatrix}\operatorname{diag}(\Sigma_\alpha)&0&0\\0&\sigma_r\sum_{s=1}^S\mu_sU^s_{(\beta^*)_s}(U^s_{(\beta^*)_s})^\mathsf{T}&0\\0&0&0\end{bmatrix},
\end{equation}
for orthogonal matrices $U^s$ and index sets $(\beta^*)_s$ with $s\in\{1,\dots,S\}$.
The case when $\sigma_r=0$ is trivial. In the following, we assume that $\sigma_r>0$.

Let
\[
p:=|(\beta^*)_s|\ \text{for all }s=1,\dots,S
\]
and $p=r-|\alpha|$.
Then
\[
\operatorname{rank}((U^s_{(\beta^*)_s})(U^s_{(\beta^*)_s})^\mathsf{T})=p, \quad
\forall \; s=1,\dots,S.
\]

By the assumption, we have
\[
\operatorname{rank}(Y)\leq r=|\alpha|+p.
\]
Therefore, we have from \eqref{eq:app-decomp} that
\[
\operatorname{rank}\Big(\sum_{s=1}^S\mu_s(U^s_{(\beta^*)_s})(U^s_{(\beta^*)_s})^\mathsf{T}\Big)=\operatorname{rank}((U^s_{(\beta^*)_s})(U^s_{(\beta^*)_s})^\mathsf{T})=p
\]
for each $s=1,\dots,S$. In fact, since each component matrix in the summation
\[
\sum_{s=1}^S\mu_s(U^s_{(\beta^*)_s})(U^s_{(\beta^*)_s})^\mathsf{T}
\]
is positive semidefinite and has all the eigenvalues being $0$ or $\mu_s$, we must have the component matrices are the same. Actually, we have
\[
\operatorname{ker}(\sum_{s=1}^S\mu_s(U^s_{(\beta^*)_s})(U^s_{(\beta^*)_s})^\mathsf{T})\subseteq\operatorname{ker}((U^1_{(\beta^*)_1})(U^1_{(\beta^*)_1})^\mathsf{T})
\]
whose dimensions equal to $|\beta|-p$, and thus the two kernels are equal to each other. Consequently, all the kernels
\[
\operatorname{ker}((U^s_{(\beta^*)_s})(U^s_{(\beta^*)_s})^\mathsf{T})\ \text{for all }s=1,\dots,S
\]
are the same. Let $W\in\mathbb R^{|\beta|\times(|\beta|-p)}$ be a matrix with orthonormal columns which form a basis for the common kernel. Then, we have for all $s=1,\dots,S$
\[
\begin{bmatrix}(Q^s_{(\beta^*)_s})&W\end{bmatrix}
\]
is an orthogonal matrix. Therefore, we have
\[
(U^s_{(\beta^*)_s})(U^s_{(\beta^*)_s})^\mathsf{T}=I-WW^\mathsf{T}\ \text{for all }s=1,\dots,S,
\]
which implies that all the matrices in the convex combination are the same.
Therefore,
\[
Z_1=\dots=Z_S.
\]
The conclusion then follows.
\end{proof}

By the proof, we see that the extreme points of $\operatorname{conv}(\Pi_{\operatorname{R}(r)}(X))$ are those in the set $\Pi_{\operatorname{R}(r)}(X)$, and can be characterized by the rank function.

A similar result for symmetric matrices can be proved similarly, we state it here for its independent interest.
Let $\operatorname{S}(r)$ be the set of symmetric matrices of rank at most $r$.
\begin{proposition}\label{lem:low-rank-sym}
Given a matrix $X\in\operatorname{S}^2(\mathbb R^n)$ and a positive integer $r\leq n$. If $Y\in\operatorname{conv}(\Pi_{\operatorname{S}(r)}(X))$, then
\begin{equation*}
Y\in \Pi_{\operatorname{S}(r)}(X)\ \text{if and only if }\operatorname{rank}(Y)\leq r.
\end{equation*}
\end{proposition}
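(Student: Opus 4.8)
The plan is to mirror the proof of Lemma~\ref{lem:low-rank}, replacing the singular value decomposition by the spectral decomposition of the symmetric matrix $X$. The forward implication is immediate, since every element of $\Pi_{\operatorname{S}(r)}(X)$ has rank at most $r$ by definition. For the converse, I would first diagonalize $X=P\Lambda P^\mathsf{T}$ with $P\in\mathbb{O}(n)$ and $\Lambda=\operatorname{diag}(\lambda_1,\dots,\lambda_n)$, the eigenvalues ordered so that $|\lambda_1|\geq\cdots\geq|\lambda_n|$, and partition the index set into $\alpha:=\{i\colon |\lambda_i|>|\lambda_r|\}$, $\beta:=\{i\colon |\lambda_i|=|\lambda_r|\}$, and $\gamma:=\{i\colon |\lambda_i|<|\lambda_r|\}$. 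Exactly as in the nonsymmetric case (the case $|\lambda_r|=0$ being trivial), one establishes the symmetric Eckart--Young characterization: every $Z\in\Pi_{\operatorname{S}(r)}(X)$ keeps the $\alpha$-eigenpairs intact, annihilates the $\gamma$-block, and on the degenerate block $V_\beta:=\operatorname{span}\{P_i\colon i\in\beta\}$ equals a best rank-$(r-|\alpha|)$ symmetric approximation of $X|_{V_\beta}$.

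The essential new feature, and the step I expect to be the main obstacle, is that $X|_{V_\beta}$ need not be definite. Writing $c:=|\lambda_r|$ and splitting $V_\beta=V_+\oplus V_-$ into the $(+c)$- and $(-c)$-eigenspaces, which are orthogonal and fixed by $X$, each optimizer restricted to $V_\beta$ takes the form $c\,\Pi_{W_+}-c\,\Pi_{W_-}$ with $W_+\subseteq V_+$, $W_-\subseteq V_-$ and $\dim W_+ + \dim W_- = r-|\alpha|=:p$. Thus, in contrast with the genuinely positive semidefinite $\beta$-block appearing in Lemma~\ref{lem:low-rank}, the numbers of directions retained from $V_+$ and from $V_-$ may \emph{a priori} vary from one extreme point to another, and this mismatch is what must be ruled out.

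To resolve it, I would write $Y=\sum_s\mu_s Z_s$ as a finite convex combination with all $\mu_s>0$ (via Carath\'eodory's theorem), and record its $\beta$-block as $cA-cB$, where $A:=\sum_s\mu_s\Pi_{W_+^s}$ and $B:=\sum_s\mu_s\Pi_{W_-^s}$ are positive semidefinite and supported on the orthogonal subspaces $V_+$ and $V_-$; hence $\operatorname{rank}(Y)=|\alpha|+\operatorname{rank}(A)+\operatorname{rank}(B)$. Setting $a_s:=\dim W_+^s$ and $b_s:=\dim W_-^s$ with $a_s+b_s=p$, the domination $A\succeq\mu_s\Pi_{W_+^s}$ yields $\operatorname{rank}(A)\geq\max_s a_s$ and likewise $\operatorname{rank}(B)\geq\max_s b_s$. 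Since $\max_s a_s+\max_s b_s\geq a_{s_0}+b_{s_0}=p$ for any fixed $s_0$, the hypothesis $\operatorname{rank}(Y)\leq r=|\alpha|+p$ forces $\operatorname{rank}(A)=\max_s a_s$, $\operatorname{rank}(B)=\max_s b_s$, and $\max_s a_s+\max_s b_s=p$; combined with $a_s+b_s=p$ together with $a_s\leq\max_s a_s$ and $b_s\leq\max_s b_s$, this pins down $a_s$ and $b_s$ to be constant in $s$.

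Finally I would run the kernel-containment rigidity argument of Lemma~\ref{lem:low-rank} separately on $V_+$ and on $V_-$. With all $\dim W_+^s$ equal to $\operatorname{rank}(A)$, the identity $\ker A=\bigcap_s\ker\Pi_{W_+^s}$ valid for a positively weighted sum of projections forces $\ker A=\ker\Pi_{W_+^s}$, hence $W_+^s$ independent of $s$; symmetrically for $W_-^s$. Therefore all $Z_s$ coincide, so $Y=Z_1\in\Pi_{\operatorname{S}(r)}(X)$, completing the proof. The only genuinely new labor relative to Lemma~\ref{lem:low-rank} is confined to the indefinite $\beta$-block, namely the counting argument that freezes the positive and negative multiplicities before the rigidity step applies.
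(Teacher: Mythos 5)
Your proof is correct, and it is worth noting that the paper itself offers no written proof of Proposition~\ref{lem:low-rank-sym}: it only remarks that the result ``can be proved similarly'' to Lemma~\ref{lem:low-rank}. Your argument is exactly that adaptation, but it also correctly isolates the one point at which ``similarly'' glosses over a genuine difference: in the symmetric case the tied block carries eigenvalues of both signs $\pm c$, so distinct extreme optimizers may split the residual rank $p=r-|\alpha|$ differently between the eigenspaces $V_+$ and $V_-$, a phenomenon absent from the nonsymmetric proof, where the tie block contributes matrices $\sigma_r U_{\beta^*}U_{\beta^*}^{\mathsf{T}}$ all of one fixed rank. Your counting step --- rank additivity $\operatorname{rank}(Y)=|\alpha|+\operatorname{rank}(A)+\operatorname{rank}(B)$, which is justified because the ranges of the $\alpha$-part, $A$ and $B$ are mutually orthogonal, combined with $\operatorname{rank}(A)\geq\max_s a_s$, $\operatorname{rank}(B)\geq\max_s b_s$ and $a_s+b_s=p$ --- does freeze $a_s$ and $b_s$ to be constant in $s$, and after that the kernel-rigidity argument of Lemma~\ref{lem:low-rank} applies verbatim on each of $V_+$ and $V_-$; your identity $\ker A=\bigcap_s\ker\Pi_{W_+^s}$ is valid since $x^{\mathsf{T}}Ax=0$ forces $\Pi_{W_+^s}x=0$ for every $s$ by positive semidefiniteness of the summands. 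That this extra step is not vacuous is shown by $X=\operatorname{diag}(1,-1)$ with $r=1$, where $\Pi_{\operatorname{S}(1)}(X)=\{\mathbf e_1\mathbf e_1^{\mathsf{T}},\,-\mathbf e_2\mathbf e_2^{\mathsf{T}}\}$ and every proper convex combination has rank two, so the sign-split counting is precisely what excludes such mixtures. The only ingredient you assume rather than prove is the symmetric Eckart--Young characterization of $\Pi_{\operatorname{S}(r)}(X)$ via spectral truncation with the $c\,\Pi_{W_+}-c\,\Pi_{W_-}$ structure on the tie block; since the paper likewise imports the corresponding characterization for the nonsymmetric case from \cite{gao2010structured,GS-10} without proof, this is a fair and consistent level of rigor.
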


\end{appendices}


\bibliographystyle{abbrv}
\bibliography{references}

\end{document}